\newtheorem{theorem}{Theorem}[section]
\newtheorem{lemma}[theorem]{Lemma}
\newtheorem{proposition}[theorem]{Proposition}
\newtheorem{corollary}[theorem]{Corollary}
\newtheorem{remark}[theorem]{Remark}
\newtheorem{introTheorem}{Theorem}
\newtheorem{introCorollary}[introTheorem]{Corollary}
\newcommand{\N}{\mathbb{N}}
\newcommand{\Z}{\mathbb{Z}}
\newcommand{\Q}{\mathbb{Q}}
\newcommand{\R}{\mathbb{R}}
\newcommand{\C}{\mathbb{C}}
\newcommand{\typeA}{\mathsf{A}}
\newcommand{\typeB}{\mathsf{B}}
\newcommand{\typeC}{\mathsf{C}}
\newcommand{\typeD}{\mathsf{D}}
\newcommand{\typeE}{\mathsf{E}}
\newcommand{\typeF}{\mathsf{F}}
\newcommand{\typeG}{\mathsf{G}}
\newcommand{\coomega}{\check{\omega}}
\newcommand{\coalpha}{\check{\alpha}}
\newcommand{\coepsilon}{\check{\epsilon}}
\newcommand{\RP}{\mathcal{P}}
\newcommand{\affPhi}{\widetilde{\Phi}}
\newcommand{\affDelta}{\widetilde{\Delta}}
\newcommand{\mi}{\mathop{\mathsf{min}}}
\newcommand{\prmi}{\mathop{\mathsf{pmin}}}
\newcommand{\IRR}{\mathcal{I}}
\newcommand{\lcm}{\mathop{\mathrm{lcm}}}
\newcommand{\polyhedral}{polyhedral}
\newcommand{\radius}{3.5pt}
\newcommand{\hiddenColor}{0.8,0.8,0.8}
\newcommand{\positiveColor}{0.9,0.9,0.9}
\def\blfootnote{\xdef\@thefnmark{}\@footnotetext}
\title{Root polytope and partitions}
\author{Rocco Chiriv\`\i}
\begin{document}

\maketitle

\abstract{Given a crystallographic reduced root system and an element $\gamma$ of the lattice generated by the roots, we study the minimum number $|\gamma|$, called the length of $\gamma$, of roots needed to express $\gamma$ as sum of roots. This number is related to the linear functionals presenting the convex hull of the roots. The map $\gamma\longmapsto|\gamma|$ turns out to be the upper integral part of a piecewise linear function with linearity domains the cones over the facets of this convex hull. In order to show this relation we investigate the integral closure of the monoid generated by the roots in a facet. We study also the positive length, i.e. the minimum number of positive roots needed to write an element, and we prove that the two notions of length coincide only for the types $\typeA_\ell$ and $\typeC_\ell$.}

\setcounter{section}{0}

\section{Introduction}

\blfootnote{
{\it 2010 Mathematics Subject Classiﬁcation.} Primary 05E45; Secondary 17B22.

$\quad${\it Key words and phrases.} Root system, root polytope, partitions.
}

\renewcommand{\theintroTheorem}{\Alph{introTheorem}}

Let $\Phi$ be a crystallographic reduced root system in the euclidean space $E$ with scalar product $(\cdot,\cdot)$. Let $R$ be the $\Z$--span of $\Phi$ in $E$ and define $|\gamma|$, the \emph{length} of $\gamma$, as the minimum $r\geq0$ such that there exist $r$ roots $\beta_1,\,\beta_2,\ldots,\beta_r$ with $\gamma=\beta_1+\beta_2+\cdots+\beta_r$. So $|\gamma|$ is the size of a minimal partition of $\gamma$ in roots.

Our aim is to describe the map $R\ni\gamma\longmapsto|\gamma|\in\N$. (It is called the word length with respect to $\Phi$ in \cite{ardila}, see also \cite{sturmfels}, and \cite{wangZhao}). As we show it is related to the convex hull $\RP_\Phi$ of $\Phi$ in $E$, called the \emph{root polytope}. Given an element $\lambda\in E$, let $H(\lambda)$ be the closed half--space of $E$ defined by $\{u\in E\,|\,(\lambda,u)\leq1\}$. If $F$ is a facet of $\RP_\Phi$ let $\lambda_F\in E$ be such that $(\lambda_F,u)=1$ if $u\in F$ and $(\lambda_F,u)<1$ for $u\in\RP_\Phi\setminus F$; then $\RP_\Phi=\bigcap_F H(\lambda_F)$, where $F$ ranges over the set of all facets of $\RP_\Phi$, is a half--space presentation of $\RP_\Phi$. Moreover let $V(F)\doteq F\cap\Phi$ be the set of roots in $F$ and $C(V(F))$ be the $\Q^+$--cone over $V(F)$, i.e. the set of non--negative rational linear combinations of elements of $V(F)$. Our main result is the following formula.

\begin{introTheorem}\label{introTheorem_formula} For any $\gamma\in R$ we have
$$
|\gamma|=\max_F\lceil(\lambda_F,\gamma)\rceil.
$$
So if $\gamma\in C(V(F))$ for some facet $F$ then $|\gamma|=\lceil(\lambda_F,\gamma)\rceil$.
\end{introTheorem}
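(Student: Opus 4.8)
The plan is to establish the two inequalities separately. The bound $|\gamma|\ge\max_F\lceil(\lambda_F,\gamma)\rceil$ is immediate: if $\gamma=\beta_1+\cdots+\beta_r$ with each $\beta_i\in\Phi$, then for every facet $F$ we have $(\lambda_F,\beta_i)\le1$ because $\Phi\subseteq\RP_\Phi\subseteq H(\lambda_F)$, hence $(\lambda_F,\gamma)\le r$; since $r\in\N$ this forces $r\ge\lceil(\lambda_F,\gamma)\rceil$, and minimising over representations and then maximising over $F$ gives the inequality. I also record the observation behind the last assertion: if $\gamma\in C(V(F))$, write $\gamma=\sum_{\beta\in V(F)}q_\beta\beta$ with $q_\beta\in\Qp$; then for any facet $F'$ one has $(\lambda_{F'},\gamma)=\sum_\beta q_\beta(\lambda_{F'},\beta)\le\sum_\beta q_\beta=(\lambda_F,\gamma)$, using $(\lambda_F,\beta)=1$ for $\beta\in V(F)$, so $F$ attains $\max_{F'}(\lambda_{F'},\gamma)$ and the formula specialises to $|\gamma|=\lceil(\lambda_F,\gamma)\rceil$. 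Hence it remains only to prove $|\gamma|\le N(\gamma)$, where I write $N(\gamma)\doteq\max_F\lceil(\lambda_F,\gamma)\rceil=\lceil\max_F(\lambda_F,\gamma)\rceil$.

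For the upper bound I would induct on $N(\gamma)$. If $N(\gamma)=0$ then $(\lambda_F,\gamma)\le0$ for every $F$, so $\gamma$ lies in the recession cone of $\RP_\Phi=\bigcap_F H(\lambda_F)$, which is $\{0\}$; thus $\gamma=0$ and $|\gamma|=0$. For the inductive step it suffices to exhibit a single root $\beta$ with $N(\gamma-\beta)\le N(\gamma)-1$, since then the inductive hypothesis gives $|\gamma-\beta|\le N(\gamma)-1$ and hence $|\gamma|\le|\gamma-\beta|+1\le N(\gamma)$. To find such a $\beta$ I would use that the cones $C(V(F))$ cover $E$: indeed $\partial\RP_\Phi=\bigcup_F F=\bigcup_F\mathrm{conv}(V(F))$, and every ray issuing from $0$ --- which lies in the interior of $\RP_\Phi$, as $\Phi=-\Phi$ spans $E$ --- meets $\partial\RP_\Phi$, so $E=\bigcup_F C(V(F))$. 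Choose a facet $F_0$ with $\gamma\in C(V(F_0))$; by the computation above $(\lambda_{F_0},\gamma)=\max_F(\lambda_F,\gamma)$, so $N(\gamma)=\lceil(\lambda_{F_0},\gamma)\rceil$. If one can pick $\beta\in V(F_0)$ with $\gamma-\beta\in C(V(F_0))$ the step is complete: applying the same computation to $\gamma-\beta$ gives $\max_F(\lambda_F,\gamma-\beta)=(\lambda_{F_0},\gamma-\beta)=(\lambda_{F_0},\gamma)-1$, whence $N(\gamma-\beta)=N(\gamma)-1$.

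This reduces the whole problem to a local statement about a single facet: given $\gamma\in C(V(F_0))\cap R$, is there a root $\beta\in V(F_0)$ with $\gamma-\beta\in C(V(F_0))$? Equivalently, does the monoid generated by $V(F_0)$ fill up the lattice points $C(V(F_0))\cap R$ of the cone? When it does one can always peel off a root of $V(F_0)$ occurring with positive coefficient and the induction closes at once; this happens for types $\typeA_\ell$ and $\typeC_\ell$. In general it does not: for instance in type $\typeG_2$ the short roots are interior lattice points of $\RP_\Phi$, so each short root $\sigma$ lies in $C(V(F_0))\cap R$ for the facet $F_0$ whose cone contains it, yet $\sigma$ is not a non-negative integral combination of the two long roots spanning $F_0$. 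So the core of the argument is a type-by-type study of these facet monoids and their integral closures: for each type one must (i) list the facets $F$ of $\RP_\Phi$ together with the sets $V(F)$ and the functionals $\lambda_F$; (ii) determine $C(V(F))\cap R$ and isolate its ``exceptional'' elements, those from which no root of $V(F)$ can be subtracted without leaving the cone; and (iii) for each exceptional $\gamma$ produce a root $\beta$ --- necessarily coming from a neighbouring facet --- with $N(\gamma-\beta)=N(\gamma)-1$.

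I expect step (iii) to be the main obstacle: it needs enough control on how $C(V(F_0))$ sits relative to the adjacent cones $C(V(F'))$ to guarantee that $\gamma-\beta$ lands in some $C(V(F'))$ on which $(\lambda_{F'},\cdot)$ takes a strictly smaller value, and this is where the explicit combinatorics of each root polytope must be exploited, the computation of the integral closures being the other delicate ingredient. Granting (i)--(iii), the induction yields $|\gamma|\le N(\gamma)$ for all $\gamma\in R$, which together with the first paragraph gives $|\gamma|=N(\gamma)=\max_F\lceil(\lambda_F,\gamma)\rceil$, and in particular $|\gamma|=\lceil(\lambda_F,\gamma)\rceil$ whenever $\gamma\in C(V(F))$.
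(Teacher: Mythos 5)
Your first paragraph (the inequality $|\gamma|\ge\max_F\lceil(\lambda_F,\gamma)\rceil$, and the observation that $F$ attains the maximum when $\gamma\in C(V(F))$) is correct and is exactly the easy half of the paper's argument. The reduction of the converse to a statement about the facet monoids $C(V(F))\cap R$ versus the monoid generated by $V(F)$ is also the right diagnosis, and it matches the paper's strategy in outline. But the proposal stops precisely where the real work begins: steps (ii) and (iii) are announced, not performed, and the final conclusion is explicitly conditional (``Granting (i)--(iii)\dots''). This is a genuine gap, and it is not a small one. In the paper, the missing content occupies Sections 4--6: one needs the Cellini--Marietti parametrization of the facets $F(\alpha;\tau)$ and the half-space presentation by the vectors $\tau\coomega_\alpha/m_\alpha$; the description of the facets adjacent to $F(\alpha)$ (Proposition \ref{proposition_adjacentFacet}), which yields workable criteria for an element of $M(F)$ to be proper and minimal (Propositions \ref{proposition_criterionProper}--\ref{proposition_invariantMinimal}); and then a type-by-type computation showing that the only proper generators of $M(F)$ outside $N(F)$ are $2\omega_3$ ($\typeB_3$), $2\omega_2$ ($\typeE_7$), $\omega_2,2\omega_2$ ($\typeE_8$) and $\omega_1,2\omega_1$ ($\typeG_2$). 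Without this classification your induction cannot close, because you have no control over which ``exceptional'' $\gamma$ actually occur.

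There is also a structural point where your induction is more delicate than what the paper does. The paper does not peel off one root at a time: it writes $\gamma=\beta_1+\cdots+\beta_t+\gamma'$ with $\beta_i\in V(F)$ and $\gamma'$ a $\leq_F$-minimal element of $M(F)$ (this uses Lemma \ref{lemma_order}, that $M(F)=\bigcup_{\gamma'}(\gamma'+N(F))$ over minimal $\gamma'$), and then verifies $|\gamma'|\le\lceil(\lambda_F,\gamma')\rceil$ by exhibiting an explicit decomposition of each of the finitely many exceptional elements into roots (Lemma \ref{lemma_minimalFormula}; e.g.\ $2\omega_3=(\alpha_1+2\alpha_2+2\alpha_3)+\alpha_3$ in $\typeB_3$, where the second summand is \emph{not} in $V(F(\alpha_3))$). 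Your step (iii) instead asks for a single root $\beta$ with $N(\gamma-\beta)=N(\gamma)-1$, which requires tracking into which adjacent cone $\gamma-\beta$ falls and that the relevant linear functional drops by at least the right amount; this can be made to work, but it demands the same explicit data plus an extra argument about adjacent cones that you have not supplied. Note also that your base case effectively needs the fact that the only nonzero points of $R$ in $\RP_\Phi$ are the roots, which is itself a consequence of the classification rather than something free. In short: the skeleton is right and parallels the paper, but the proof of the hard inequality is missing its entire body.
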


Hence we see that the length map is the upper integral part of linear functions on the cones over the facets of $\RP_\Phi$.

Our proof of this theorem requires the evaluation of the map $\gamma\longmapsto|\gamma|$ at {a minimal set of} generators of the monoid $M(F)\doteq C(V(F))\cap R$, with $F$ a face of $\RP_\Phi$. Let $N(F)$ be the $\N$--span of $V(F)$ and $Z(F)$ be the $\Z$--span of $V(F)$; $N(F)$ is a submonoid of $M(F)$ and we say that the face $F$ is \emph{normal} if $C(V(F))\cap Z(F)=N(F)$. Notice, however, that it is to be expected that $M(F)$ is larger than $N(F)$ since for some facets $Z(F)$ is a proper sublattice of $R$. We call $M(F)$ the \emph{integral closure} of $N(F)$ in $R$ and we say that $F$ is \emph{integrally closed} in $R$ if $M(F)=N(F)$. The relation of these monoids to certain toric varieties gives reason to these definitions; in particular the normality of a face $F$ is equivalent to the normality of the toric variety whose coordinate ring is $\C[t^\beta\,|\,\beta\in V(F)]$. Similar varieties have been extensively studied over the years; see for example \cite{procesi} and the other papers cited there.

As stated above, we need to find the generators of $M(F)$; this computation uses the uniform description of $\RP_\Phi$ given by Cellini and Marietti in \cite{celliniMarietti} (see also Vinberg's paper \cite{vinberg} and \cite{chari}, \cite{khare} for a generalization). Assuming here that $\Phi$ is irreducible, the faces of $\RP_\Phi$ may be naturally defined in terms of the affine root system associated to $\Phi$. Let us say that a simple root $\alpha$ is \emph{maximal} if its complement in the affine Dynkin diagram of $\Phi$ is connected (see Table \ref{table_rootData} in Section \ref{section_normality} for the list of maximal roots). Let $\coomega_\alpha$ be the coweight dual to $\alpha$ and let $\theta$ be the highest root of $\Phi$; then the \emph{standard parabolic facet} $F(\alpha)$, with $\alpha$ a (simple) maximal root, is the set of elements $u\in E$ such that $(\coomega_\alpha,u)=(\coomega_\alpha,\theta)$. As we recall in Section \ref{section_rootPolytope} below, any facet of $\RP_\Phi$ is in the orbit of a unique standard parabolic facet by the Weyl group action. Notice that this gives in particular an explicit half--space presentation of $\RP_\Phi$ and allows for an effective computation of the length as in the above theorem.

We are now ready to report our computation about the generators of $M(F)$. As we see in Section \ref{section_preliminary} the intersection of $M(F)$ with a face of the cone $C(V(F))$ is the monoid $M(F')$ for some facet $F'$ of a subsystem of $\Phi$; hence we consider only the \emph{proper} generators of $M(F)$, i.e. those not in the border of $C(V(F))$. In the following theorem we number the simple roots and the fundamental weights as in \cite{bourbaki}.

\begin{introTheorem}\label{introTheorem_properMinimalElements} Let $F$ be a facet of $\RP_\Phi$, then the proper generators for the monoid $M(F)$ not in $V(F)$ are as follows:
\begin{itemize}
\item $2\omega_3$ for the facet $F(\alpha_3)$ of type $\typeB_3$,
\item $2\omega_2$ for the facet $F(\alpha_2)$ of type $\typeE_7$,
\item $\omega_2$ and $2\omega_2$ for the facet $F(\alpha_2)$ of type $\typeE_8$ and
\item $\omega_1$ and $2\omega_1$ for the facet $F(\alpha_1)$ of type $\typeG_2$.
\end{itemize}
All other facets of any other type have no proper generator not in $V(F)$.
\end{introTheorem}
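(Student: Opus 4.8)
The plan is to reduce to standard parabolic facets and then to compute type by type, the basic invariant being the mark $m_\alpha$, i.e.\ the coefficient of $\alpha$ in $\theta$. Since the Weyl group $W$ preserves $\Phi$ and the lattice $R$, for $w\in W$ and a facet $F$ we have $V(wF)=w\,V(F)$, $C(V(wF))=w\,C(V(F))$, $Z(wF)=w\,Z(F)$, whence $M(wF)=w\,M(F)$ and $N(wF)=w\,N(F)$, so $w$ carries the proper generators of $M(F)$ bijectively onto those of $M(wF)$; by Section~\ref{section_rootPolytope} every facet of $\RP_\Phi$ is $W$-conjugate to a unique standard parabolic facet $F(\alpha)$ with $\alpha$ maximal, so it is enough to treat the facets $F(\alpha)$ of each irreducible type, listed with their marks in Table~\ref{table_rootData}. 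The key description is that a root $\beta$ lies in $F(\alpha)$ exactly when its coefficient on $\alpha$ equals $m_\alpha$, because $(\coomega_\alpha,\beta)$ is the $\alpha$-coefficient of $\beta$ times $\tfrac12(\alpha,\alpha)$ and is maximised over $\Phi$ by $\theta$. Hence $F(\alpha)=\mathrm{conv}\,V(F(\alpha))$ is the bottom cross-section of the cone $C(V(F(\alpha)))$, and (since $\RP_\Phi\cap R=\{0\}\cup\Phi$) its only lattice points are the roots it contains; moreover any nontrivial sum in $N(F(\alpha))$ has $\coomega_\alpha$-value at least $2m_\alpha$, so $V(F(\alpha))$ is the minimal generating set of $N(F(\alpha))$, and any proper generator of $M(F(\alpha))$ outside $V(F(\alpha))$ lies strictly above the bottom level of the cone and in its relative interior.

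Second, I would establish, by direct computation in the standard coordinates of Section~\ref{section_normality}, the normality of each $F(\alpha)$, namely $C(V(F(\alpha)))\cap Z(F(\alpha))=N(F(\alpha))$, together with the sharper fact that each coset of $Z(F(\alpha))$ in $R$ meeting $C(V(F(\alpha)))$ contains a unique minimal element $\mu$ with $C(V(F(\alpha)))\cap(\mu+Z(F(\alpha)))=\mu+N(F(\alpha))$. Granting this, $M(F(\alpha))=C(V(F(\alpha)))\cap R$ is the disjoint union of the sets $\mu+N(F(\alpha))$ over these coset minima, so the proper generators of $M(F(\alpha))$ outside $V(F(\alpha))$ are precisely the nonzero coset minima lying in the relative interior of $C(V(F(\alpha)))$. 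The whole statement is thus reduced to: (a) computing the quotient $R/Z(F(\alpha))$; (b) for each nontrivial coset, exhibiting the minimal element of its intersection with $C(V(F(\alpha)))$; (c) deciding which of these minimal elements are interior to the cone.

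Now one goes through the types. If $m_\alpha=1$ --- all facets of $\typeA_\ell,\typeC_\ell,\typeD_\ell,\typeE_6$, together with $F(\alpha_1)$ of $\typeB_\ell$ and $F(\alpha_7)$ of $\typeE_7$ --- one checks directly that $Z(F(\alpha))=R$ (in type $\typeA$ because $C(V(F(\alpha)))$ is a transportation cone, in the other cases because its defining matrix is still totally unimodular), so $M(F(\alpha))=N(F(\alpha))$ and nothing is added beyond $V(F(\alpha))$. This leaves the finitely many facets with $m_\alpha\ge2$: $F(\alpha_\ell)$ of $\typeB_\ell$ $(\ell\ge3)$, $F(\alpha_4)$ of $\typeF_4$, $F(\alpha_2)$ of $\typeE_7$, $F(\alpha_1)$ and $F(\alpha_2)$ of $\typeE_8$, and $F(\alpha_1)$ of $\typeG_2$ (the low ranks such as $\typeB_2$ have no maximal $\alpha$ with $m_\alpha\ge2$). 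For each of these I would write $V(F(\alpha))$ explicitly, compute the facet inequalities of $C(V(F(\alpha)))$ and the index $[R:Z(F(\alpha))]$, and run (a)--(c): this yields $2\omega_3$ for $\typeB_3$, $2\omega_2$ for $\typeE_7$, $\omega_2$ and $2\omega_2$ for $\typeE_8$, and $\omega_1$ and $2\omega_1$ for $\typeG_2$ (the last two coming from cyclic quotients of order $3$), whereas for $\typeB_\ell$ with $\ell\ge4$, for $F(\alpha_1)$ of $\typeE_8$, and for $\typeF_4$ one checks instead that every nontrivial coset minimum lies on a proper face of $C(V(F(\alpha)))$, where by the analysis of Section~\ref{section_preliminary} it is a non-proper generator of $M(F')$ for a lower-rank subsystem facet $F'$, so no new proper generator appears.

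The hard part is the explicit combinatorics of (a)--(c) for $\typeE_7$ and $\typeE_8$: there $V(F(\alpha_2))$ is a large set of roots --- those whose coefficient on $\alpha_2$ is $2$, respectively $3$ --- and one must control its structure, the facets of the resulting high-dimensional cone, and the finite group $R/Z(F(\alpha_2))$ well enough to list the coset minima and to locate them relative to the boundary; likewise the boundary bookkeeping that rules out new proper generators for $\typeB_\ell$ $(\ell\ge4)$, for $F(\alpha_1)$ of $\typeE_8$ and for $\typeF_4$ depends on having the face lattice of $C(V(F(\alpha)))$ in hand through the subsystem description of Section~\ref{section_preliminary}.
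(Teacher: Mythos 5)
Your overall architecture (reduce to the coordinate facets $F(\alpha)$ by the Weyl group, use Lemma \ref{lemma_faceLattice} to see that $Z(F(\alpha))=R$ exactly when $m_\alpha=1$, then analyse the cosets of $Z(F(\alpha))$ in $R$ case by case) is compatible with the paper's, and you land on the correct list. But two of your load-bearing claims are unjustified, and one is false as stated. First, for the $m_\alpha=1$ facets you dispose of everything by asserting that the matrix of $V(F(\alpha))$ is ``still totally unimodular'' outside type $\typeA$. This is a much stronger property than the normality you actually need, it is nowhere verified, and it is precisely the wrong place to wave hands: $F(\alpha_7)$ of $\typeE_7$ (the Gosset polytope on the $27$ roots with $\alpha_7$-coefficient $1$) has $m_{\alpha_7}=1$ and is the single hardest case in the paper's proof, requiring a delicate argument with a specific $\tau\in W_{\Delta\setminus\{\alpha_7\}}$ and $\tau^{-1}\theta=\alpha_7$; the introduction even remarks that suitable unimodular triangulations are not known for all root polytopes. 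Without a proof of normality there, you cannot conclude $M(F)=N(F)$, so you have not excluded extra proper generators for that facet.

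Second, your structural reduction rests on the assertion that every coset of $Z(F(\alpha))$ meeting $C(V(F(\alpha)))$ intersects it in a single translate $\mu+N(F(\alpha))$ of the monoid, so that the answer is read off from ``coset minima in the relative interior.'' Lemma \ref{lemma_order} only gives $M(F)=\bigcup_\gamma(\gamma+N(F))$ over \emph{all} minimal elements, and by Corollary \ref{corollary_minimalOutside} the minimal elements contributed by the subfaces of $F$ (facets of lower-rank subsystems, which can themselves have proper generators, e.g.\ a $\typeB_3$ or $\typeE_7$ subfacet) lie \emph{outside} $Z(V(F))$, hence in nontrivial cosets and on the boundary of the cone; a nontrivial coset can therefore carry several incomparable minimal generators, and ``the'' coset minimum is not well defined. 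What the paper supplies, and what your plan lacks, is an effective mechanism for steps (b)--(c): the separating hyperplanes $\nabla_{\alpha,\delta}^\tau$ of Remark \ref{remark_separatingHyperplanes} and the criteria of Propositions \ref{proposition_criterionProper}--\ref{proposition_invariantMinimal}, which convert properness and minimality into explicit inequalities on the coefficients $c_\epsilon$ of $\gamma=\sum c_\epsilon\omega_\epsilon$ and make the case analysis finite and checkable. As written, your argument for $\typeE_7$, $\typeE_8$, $\typeF_4$ and $\typeB_\ell$ ($\ell\ge4$) is a statement of what must be computed rather than a computation, so the proof is incomplete. (Minor point: $(\coomega_\alpha,\beta)$ equals the coefficient of $\alpha$ in $\beta$ with no factor $\tfrac12(\alpha,\alpha)$, since the $\coomega_\alpha$ are by definition dual to $\Delta$.)
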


In Section \ref{section_normality} we develop a general theory for proper generators to a certain extent; but the proof of the above theorem needs some simple computations and checks that are carried out on a case--by--case basis. As a consequence of the theorem we have the following two results.

\begin{introCorollary}\label{introCorollary_faceNormality} Any face of the root polytope $\RP_\Phi$ is normal.
\end{introCorollary}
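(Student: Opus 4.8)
The plan is to deduce this from Theorem~\ref{introTheorem_properMinimalElements}. The first observation is that, since $V(F)\subseteq\Phi\subseteq R$, the lattice $Z(F)$ is contained in $R$, so $C(V(F))\cap Z(F)=\bigl(C(V(F))\cap R\bigr)\cap Z(F)=M(F)\cap Z(F)$; as $N(F)\subseteq M(F)\cap Z(F)$ always holds, a face $F$ is normal precisely when $M(F)\cap Z(F)\subseteq N(F)$, and in particular every facet with $M(F)=N(F)$ is automatically normal. Next I would reduce the general statement to facets of irreducible root systems. If $\Phi$ is the orthogonal sum of irreducible $\Phi_1,\dots,\Phi_k$, then $\RP_\Phi$ is the join of the $\RP_{\Phi_i}$ sitting in mutually orthogonal subspaces and every face of $\RP_\Phi$ is the join of faces of the factors; in that situation $N$, $Z$ and $C(V(\cdot))$ all split as direct sums, so normality is inherited from the factors. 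For irreducible $\Phi$ I would use the recursive description of faces recalled in Section~\ref{section_preliminary}: up to the Weyl group action---under which normality is invariant, being a property of the isomorphism class of the triple $\bigl(N(F),Z(F),C(V(F))\bigr)$---every face of $\RP_\Phi$ carries the same monoid data as a facet of the root polytope of a root subsystem of $\Phi$. Hence it suffices to prove that every facet of $\RP_\Phi$ is normal for every irreducible $\Phi$, and by Theorem~\ref{introTheorem_properMinimalElements} this is already known except for the four facets $F(\alpha_3)$ of type $\typeB_3$, $F(\alpha_2)$ of types $\typeE_7$ and $\typeE_8$, and $F(\alpha_1)$ of type $\typeG_2$.

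For each of these four facets I would verify $M(F)\cap Z(F)\subseteq N(F)$ by an explicit finite computation, along the following lines. First, produce a finite generating set of the monoid $M(F)$: the roots $V(F)$, the one or two exceptional proper generators supplied by Theorem~\ref{introTheorem_properMinimalElements}, and the generators lying on the boundary of $C(V(F))$; the last of these contribute nothing outside $N(F)$, as one sees by induction on the rank (proper subsystems being already normal) together with a look at the explicit data---for instance, for $\typeB_3$ the facet is a triangle with primitive vertices and unimodular edges. Second, compute $Z(F)$ as a sublattice of $R$ and note that the quotient $R/Z(F)$ is cyclic of small order: for $\typeB_3$ one gets $Z(F)=\{\mu\in R:\mu\text{ has even coordinate sum}\}$, of index $2$, and for $\typeG_2$ one gets $Z(F)=3\Z\alpha_1+\Z\alpha_2$, of index $3$. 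Third, record the short relations expressing the relevant multiples of the exceptional generators as sums of roots of $F$, such as $2(e_1+e_2+e_3)=(e_1+e_2)+(e_1+e_3)+(e_2+e_3)$ for $2\omega_3$ in $\typeB_3$ and $3\omega_1=(3\alpha_1+\alpha_2)+(3\alpha_1+2\alpha_2)$ in $\typeG_2$. Combining these, each exceptional generator has nonzero class in $R/Z(F)$, and any $\N$--combination of them lying in $Z(F)$ is already in $N(F)$; so if $\gamma=\nu+\sum a_g\,g\in Z(F)$ with $\nu\in N(F)$ and the $g$ among the exceptional generators, then $\sum a_g\,g\in Z(F)$, hence $\gamma\in N(F)$, which is what we want.

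I expect the genuine effort to be concentrated entirely in those four explicit verifications---in particular in bookkeeping the boundary generators of $M(F)$ for the larger cases $\typeE_7$ and $\typeE_8$, and in ruling out that some $\N$--combination of the (at most two) exceptional generators falls into $Z(F)\setminus N(F)$. Both are finite and short: the boundary analysis only involves subsystems of strictly smaller rank, covered by the inductive hypothesis, and the combination check reduces---because $R/Z(F)$ is cyclic of order $2$ or $3$---to inspecting a couple of residue classes of coefficient vectors and clearing each into $N(F)$ using the explicit relations. Everything else---the reduction to irreducible facets, the identification of a face with a subsystem facet, and the case $M(F)=N(F)$---is formal.
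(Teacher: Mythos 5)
Your overall strategy --- reduce to facets of irreducible systems, observe that the only possible obstructions are the exceptional proper generators of Theorem~\ref{introTheorem_properMinimalElements}, and kill those by showing they have nonzero class modulo $Z(F)$ --- is the paper's strategy, and your residue computations (e.g.\ $2\omega_3\notin Z(F)$ for $\typeB_3$ because $m_3=2$ does not divide $(\coomega_3,2\omega_3)=3$) are exactly the checks in the paper's Corollary~\ref{corollary_minimalOutside}. The genuine gap is in your treatment of the generators of $M(F)$ lying on the boundary of $C(V(F))$. You assert that these ``contribute nothing outside $N(F)$ \dots\ by induction on the rank (proper subsystems being already normal)''. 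This conflates normality with integral closure: a boundary generator is a proper minimal element of $M(F'')=C(V(F''))\cap R$ for a proper subface $F''$, i.e.\ an element of the ambient lattice $R$, and there is no reason for it to lie in $Z(V(F''))$ --- when it fails to lie in $N(F)$ it is precisely because it sits outside $Z(V(F''))$. Normality of $F''$ only controls $C(V(F''))\cap Z(V(F''))$, so the inductive hypothesis does not apply to such an element, and the claim that all boundary generators lie in $N(F)$ is unjustified. For the same reason your statement that Theorem~\ref{introTheorem_properMinimalElements} ``already'' gives normality of every facet other than the four exceptional ones does not follow: a facet with no proper generators outside $V(F)$ can still have $M(F)\neq N(F)$ on account of its subfaces, which is exactly why the paper formulates Corollary~\ref{corollary_minimalOutside} for all of $\mi(F)$ and not just for $\prmi(F)$ of the four facets.

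What closes this gap in the paper is Lemma~\ref{lemma_subfaceLattice}, $Z(V(F''))=Z(V(F))\cap\langle F''\rangle_\R$: a minimal element coming from a subface $F''$ that happens to lie in $Z(V(F))$ automatically lies in $Z(V(F''))$, and then the divisibility check applied to $F''$ rules it out. Combined with Lemma~\ref{lemma_order}, which writes every element of $M(F)$ as $\gamma_1+\nu$ with a \emph{single} minimal element $\gamma_1$ and $\nu\in N(F)$, this also makes your second round of verifications unnecessary: no element of $M(F)\cap Z(V(F))$ ever involves more than one copy of an exceptional generator, so you never need to exhibit $2\cdot 2\omega_3$, $4\omega_2$, $3\omega_2$ or $3\omega_1$ as sums of roots of the facet. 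Those extra checks (carried out only for $\typeB_3$ and $\typeG_2$, deferred for $\typeE_7$ and $\typeE_8$) are not wrong, but they are both incomplete and avoidable once the minimal-element decomposition and the lattice compatibility of subfaces are in place.
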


\begin{introCorollary}\label{introCorollary_faceIntegrality} The facet $F(\alpha)$, $\alpha\in\Delta$ a maximal root, is integrally closed in $R$ if and only if $(\coomega_\alpha,\theta)=1$.
\end{introCorollary}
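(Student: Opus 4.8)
The plan is to decide when the two monoids $M(F(\alpha))$ and $N(F(\alpha))$ coincide, which by definition is precisely when $F(\alpha)$ is integrally closed in $R$. The useful reformulation is the following: by Corollary~\ref{introCorollary_faceNormality} the face $F(\alpha)$ is normal, so $N(F(\alpha))=C(V(F(\alpha)))\cap Z(F(\alpha))$; since moreover $M(F(\alpha))=C(V(F(\alpha)))\cap R$ by definition and $Z(F(\alpha))\subseteq R$, the face $F(\alpha)$ is integrally closed in $R$ if and only if the cone $C(V(F(\alpha)))$ meets no point of $R$ lying outside the sublattice $Z(F(\alpha))$. The entry point is that every $\beta\in V(F(\alpha))$ satisfies $(\coomega_\alpha,\beta)=(\coomega_\alpha,\theta)$, so $(\coomega_\alpha,\cdot)$ takes values in $(\coomega_\alpha,\theta)\,\Z$ on $Z(F(\alpha))$, whereas $(\coomega_\alpha,\alpha)=1$; hence $Z(F(\alpha))=R$ exactly when $(\coomega_\alpha,\theta)=1$, and $Z(F(\alpha))$ is a proper sublattice of $R$ otherwise.

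For the implication from $(\coomega_\alpha,\theta)=1$ to integral closedness it then suffices to know that $Z(F(\alpha))=R$, after which $M(F(\alpha))=C(V(F(\alpha)))\cap R=C(V(F(\alpha)))\cap Z(F(\alpha))=N(F(\alpha))$ by the reformulation above. To see $Z(F(\alpha))=R$ one uses that $V(F(\alpha))$ is the set of roots whose coefficient of $\alpha$ equals $1$: it contains $\alpha$, and for each simple root $\alpha_i$ adjacent to $\alpha$ it contains $\alpha+\alpha_i$; walking along the connected Dynkin diagram away from $\alpha$ one finds, for every $\alpha_i\ne\alpha$, a root in $V(F(\alpha))$ equal to $\alpha$ plus $\alpha_i$ plus simple roots already reached, so the differences of elements of $V(F(\alpha))$ span the $\Z$-span of $\Delta\setminus\{\alpha\}$, and together with $\alpha$ this is all of $R$. (Alternatively this can be read off the root data collected in Table~\ref{table_rootData}.)

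For the converse I would argue the contrapositive: assuming $(\coomega_\alpha,\theta)\ge 2$, I must exhibit an element of $M(F(\alpha))\setminus N(F(\alpha))$, i.e.\ a point of $R$ inside $C(V(F(\alpha)))$ at a $\coomega_\alpha$-level not divisible by $(\coomega_\alpha,\theta)$. By Table~\ref{table_rootData} the maximal roots with $(\coomega_\alpha,\theta)\ge 2$ are, up to the symmetry of the diagram, only finitely many: $F(\alpha_\ell)$ in $\typeB_\ell$, $F(\alpha_2)$ in $\typeE_7$, $F(\alpha_1)$ and $F(\alpha_2)$ in $\typeE_8$, $F(\alpha_4)$ in $\typeF_4$, and $F(\alpha_1)$ in $\typeG_2$. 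For those already listed in Theorem~\ref{introTheorem_properMinimalElements} the quoted proper generator is the witness; indeed $2\omega_3$ in $\typeB_3$, $2\omega_2$ in $\typeE_7$, $\omega_2$ in $\typeE_8$ and $\omega_1$ in $\typeG_2$ sit at $\coomega_\alpha$-levels $3,7,8,2$ respectively, none a multiple of the corresponding $(\coomega_\alpha,\theta)\in\{2,2,3,3\}$, so none lies in $Z(F(\alpha))\supseteq N(F(\alpha))$. For the remaining cases --- $\typeB_\ell$ with $\ell\ge 4$, $\typeF_4$ at $\alpha_4$, and $\typeE_8$ at $\alpha_1$ --- the witness comes from a face: by the reduction of Section~\ref{section_preliminary}, $C(V(F(\alpha)))$ has as a proper face the cone over a facet of a subsystem which is one of the four special facets of Theorem~\ref{introTheorem_properMinimalElements} (for $\typeB_\ell$ the $\typeB_3$ on the last three nodes, whose generator $2\omega_3$ becomes the sum $e_{\ell-2}+e_{\ell-1}+e_\ell$ of coordinate vectors --- a point of $R$ of odd coordinate-sum, hence outside $Z(F(\alpha))$ while lying in $C(V(F(\alpha)))$; similarly a $\typeB_3$ inside $\typeF_4$ and an $\typeE_7$ inside $\typeE_8$), so the associated generator lies in $M(F(\alpha))\setminus N(F(\alpha))$. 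The main obstacle is precisely this last, finite, case analysis: one must locate the correct subsystem inside $\typeF_4$ and $\typeE_8$ and check that its bad facet really occurs as a face of $C(V(F(\alpha)))$; everything else is formal once Corollary~\ref{introCorollary_faceNormality} and Theorem~\ref{introTheorem_properMinimalElements} are available.
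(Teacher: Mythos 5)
Your ``if'' direction follows the paper's route: normality (Corollary~\ref{introCorollary_faceNormality}) reduces integral closedness to the lattice equality $Z(V(F(\alpha)))=R$, and your walking argument along the Dynkin diagram re-derives the case $m_\alpha=1$ of Lemma~\ref{lemma_faceLattice}, which the paper simply cites to get $Z(V(F(\alpha)))=\langle\Delta\setminus\{\alpha\},m_\alpha\alpha\rangle_\Z$ and hence $Z(V(F(\alpha)))=R$ iff $m_\alpha=(\coomega_\alpha,\theta)=1$. The genuine divergence is in the ``only if'' direction. You argue by exhibiting, type by type, an explicit element of $M(F(\alpha))\setminus Z(V(F(\alpha)))$, taking the proper generators of Theorem~\ref{introTheorem_properMinimalElements} as witnesses where available and hunting for witnesses on subsystem faces ($\typeB_3$ inside $\typeB_\ell$ and $\typeF_4$, $\typeE_7$ inside $\typeE_8$) otherwise. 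The paper shows that no witness needs to be produced at all: writing $M(F)=\bigcup_{\gamma\in\mathcal{R}}\bigl((\gamma+Z(V(F)))\cap C(V(F))\bigr)$ over a set $\mathcal{R}$ of coset representatives of $R/Z(V(F))$, every piece is non-empty simply because $C(V(F))$ is a cone of maximal dimension: it contains balls of arbitrarily large radius, hence a point of every coset of the full-rank sublattice $Z(V(F))$. So $Z(V(F))\subsetneq R$ already forces $M(F)\supsetneq M_0=N(F)$, with no case analysis and no appeal to the classification of proper generators beyond what is already inside Corollary~\ref{introCorollary_faceNormality}.

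This soft observation is precisely what dissolves what you call ``the main obstacle.'' As written, your converse is incomplete for $\typeF_4$ at $\alpha_4$ and $\typeE_8$ at $\alpha_1$: you state that one ``must locate the correct subsystem \ldots\ and check that its bad facet really occurs as a face of $C(V(F(\alpha)))$'' without doing so. The facts you would need are true and checkable (for instance, in $\typeF_4$ the subface $F(\{\alpha_3,\alpha_4\})$ of $F(\alpha_4)$ is the facet $F(\alpha_3)$ of a $\typeB_3$ subsystem, and the resulting element of $R$ has half-integral coordinates, hence lies outside $Z(V(F(\alpha_4)))=\Z^4$), but carrying them out is exactly the kind of work the coset argument makes unnecessary. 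If you replace your witness-hunting by the observation that a full-dimensional cone meets every coset of a full-rank sublattice, the whole corollary follows from Corollary~\ref{introCorollary_faceNormality} and Lemma~\ref{lemma_faceLattice} alone.
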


These integral closure properties may also be proved by finding a unimodular triangulation of the facets of the root polytope; see for example \cite{ardila} where such a triangulation is given for type $\typeA$, $\typeC$ and $\typeD$ via explicit realizations of these root systems. Similar polytopes related to root systems and their normality are studied via unimodular triangulation in \cite{bruns}, \cite{lamI} and \cite{lamII} while in \cite{payne} a combinatorial characterization of diagonally split toric varieties is used. Notice however, that, to our best knowledge, it is not known whether all root polytopes admit an unimodular triangulation suitable to prove the above corollaries, nor we know how to costruct a triangulation in a uniform way with respect to the root system type.

It is natural to consider also another type of length map. Let us choose a positive subsystem $\Phi^+$ of $\Phi$ and let $R^+$ be the $\N$--span of $\Phi^+$ and, to an element $\gamma$ of $R^+$, let us associate the minimum number $|\gamma|_+$ of positive roots needed to write $\gamma$ as sum of positive roots. So $|\gamma|_+$ is the size of a minimal partition of $\gamma$ in positive roots. We call $|\gamma|_+$ the \emph{positive length} of $\gamma$.  It is clear that $|\gamma|\leq|\gamma|_+$ but in general this inequality is strict. Consider, for example, $\typeB_3$ and the root $\beta\doteq\alpha_1+\alpha_2+2\alpha_3$. We have $\gamma\doteq\beta-\alpha_2=\alpha_1+2\alpha_3$ and this shows that $|\gamma|=2$ while $|\gamma|_+=3$ since any root has connected support.

\begin{introCorollary}\label{introCorollary_lengths}
The positive length map coincides with the length map only for the types $\typeA_\ell$ and $\typeC_\ell$.
\end{introCorollary}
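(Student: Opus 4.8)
\emph{Strategy.} The inequality $|\gamma|\le|\gamma|_+$ always holds, so for the two types $\typeA_\ell,\typeC_\ell$ it is enough to bound $|\gamma|_+$ above and $|\gamma|$ below by a common value, and for every other type it is enough to exhibit one element of $R^+$ on which the two differ. We may assume $\Phi$ irreducible, the general case following componentwise.

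\emph{The types $\typeA_\ell$ and $\typeC_\ell$.} Realize $\typeA_\ell$ as $\{e_i-e_j:i\neq j\}$ inside $\{x\in\R^{\ell+1}:\sum_i x_i=0\}$, and $\typeC_\ell$ as $\{\pm e_i\pm e_j:i\neq j\}\cup\{\pm2e_i\}$ in $\R^\ell$; write $\gamma=\sum_i c_i e_i$. Every root $\beta$ of either system satisfies $\sum_i|\beta_i|=2$, so from $\gamma=\beta_1+\cdots+\beta_r$ one gets $\sum_i|c_i|\le\sum_k\sum_i|(\beta_k)_i|=2r$, hence $|\gamma|\ge\frac12\sum_i|c_i|$; for $\gamma\in R$ the right side is an integer, since $\sum_i|c_i|\equiv\sum_i c_i\pmod2$ and $\sum_i c_i=0$ in type $\typeA$, resp.\ is even in type $\typeC$. (The same bound also follows from Theorem~\ref{introTheorem_formula} applied to the ``coordinate--sum'' facet functionals of $\RP_\Phi$.) Conversely, I claim $|\gamma|_+\le\frac12\sum_i|c_i|$ for $\gamma\in R^+$. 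As $R^+$ is the $\N$--span of the simple roots, $\gamma\in R^+$ amounts to the partial sums $\sum_{i\le k}c_i$ being $\ge 0$ for all $k$ (with $\sum_i c_i$ even in type $\typeC$). Scanning the coordinates left to right, match each unit of negative charge at a position to a still unmatched unit of positive charge from a strictly smaller position — always available, since the number of such unmatched units equals the current partial sum, which is nonnegative — thus producing a positive root $e_i-e_j$ with $i<j$; the leftover positive units, which number $\sum_i c_i$ (zero in type $\typeA$, even in type $\typeC$), are paired off into positive roots $e_i+e_j$ or $2e_i$. This writes $\gamma$ with $\frac12\sum_i|c_i|$ positive roots. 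Hence $\frac12\sum_i|c_i|\le|\gamma|\le|\gamma|_+\le\frac12\sum_i|c_i|$, all equal, so the two length maps coincide in types $\typeA_\ell$ and $\typeC_\ell$.

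\emph{All remaining types.} I use the elementary classification fact that an irreducible root system not of type $\typeA$ or $\typeC$ contains, as a parabolic subsystem (generated by a subset of $\Delta$), an irreducible subsystem of type $\typeB_3$, $\typeD_4$ or $\typeG_2$: $\typeB_\ell$ ($\ell\ge3$) and $\typeF_4$ contain $\typeB_3$; $\typeD_\ell$ ($\ell\ge4$), $\typeE_6$, $\typeE_7$, $\typeE_8$ contain $\typeD_4$; and $\typeG_2$ is itself $\typeG_2$. So it suffices to produce, in each of $\typeB_3$, $\typeD_4$, $\typeG_2$, an element $\gamma$ of the positive root monoid with $|\gamma|<|\gamma|_+$ whose two lengths are unchanged in any ambient $\Phi$ containing the subsystem. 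The value of $|\gamma|_+$ is automatically intrinsic to the subsystem: if $\gamma=\beta_1+\cdots+\beta_r$ with the $\beta_k\in\Phi^+$, then $\operatorname{supp}(\beta_k)\subseteq\operatorname{supp}(\gamma)$ for every $k$, so each $\beta_k$ is a positive root of the subsystem. Take $\gamma=\alpha_1+2\alpha_3$ in $\typeB_3$, $\gamma=\alpha_1+\alpha_3+\alpha_4$ in $\typeD_4$, and $\gamma=3\alpha_1$ in $\typeG_2$ ($\alpha_1$ the short simple root), in Bourbaki numbering. In each case $\operatorname{supp}(\gamma)$ contains no edge of the Dynkin diagram — and for $\typeG_2$ it is a single vertex, with $2\alpha_1,3\alpha_1$ not roots — so every positive root supported on it is simple, and $|\gamma|_+=3$. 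On the other hand $\gamma=\beta-\alpha$ with $(\beta,\alpha)=(\alpha_1+\alpha_2+2\alpha_3,\alpha_2)$, $(\alpha_1+\alpha_2+\alpha_3+\alpha_4,\alpha_2)$, $(3\alpha_1+\alpha_2,\alpha_2)$ respectively, where $\beta$ is a root of the subsystem (hence of $\Phi$) and $\alpha\in\Delta$; thus $|\gamma|\le2$, while $\gamma$ is neither $0$ nor a root of $\Phi$ (its support is disconnected, resp.\ $3\alpha_1$ is not a root). Therefore $|\gamma|=2<3=|\gamma|_+$, and the two maps differ for every type other than $\typeA_\ell$ and $\typeC_\ell$.

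\emph{Main obstacle.} The real work is the first part: the lower bound $|\gamma|\ge\frac12\sum_i|c_i|$ is immediate, but matching it by an upper bound for $|\gamma|_+$ requires the correct combinatorial description of $R^+$ and a careful greedy construction — particularly in type $\typeC$, where the roots $2e_i$ and the sign changes $\pm e_i\pm e_j$ are precisely what is needed to absorb the residual positive charge after pairing. The ``only if'' direction, by contrast, reduces cleanly to three small rank computations once the parabolic--subsystem reduction is in place.
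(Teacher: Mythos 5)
Your proof is correct, but it reaches the result by a genuinely different route from the paper's, which goes through Propositions \ref{proposition_lengthMapA}, \ref{proposition_lengthMapC} and \ref{proposition_differentTypes}. For type $\typeA_\ell$ the paper computes $|\gamma|_+=h(\gamma)$ via horizontal tilings and then gets $|\gamma|\geq|\gamma|_+$ from the facet formula of Theorem \ref{introTheorem_formula}; your quantity $\tfrac12\sum_i|c_i|$ is exactly $h(\gamma)$ in coordinates, but you obtain the lower bound on $|\gamma|$ from the elementary observation that every root has coordinate $1$--norm equal to $2$, so you never need Theorem \ref{introTheorem_formula}. The divergence is sharper for type $\typeC_\ell$: the paper imports the Cellini--Marietti triangulation of the facets (unimodularity plus compatibility with $C(\Phi^+)$), whereas your greedy left-to-right pairing --- matching negative units to earlier positive units to form $e_i-e_j$ and absorbing the even residue into $e_i+e_j$ and $2e_i$ --- is self-contained and elementary; the one point to check, that $\gamma\in R^+$ forces all partial sums $\sum_{i\le k}c_i\ge 0$ with $\sum_ic_i$ even, you verify correctly. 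What your route buys is independence from both Theorem \ref{introTheorem_formula} and the external triangulation result; what the paper's route buys is the explicit formula $|\gamma|=|\gamma|_+=h(\gamma)$ as a stated by-product and a conceptual link to the ``\polyhedral'' property of $\Phi^+$ discussed in the introduction. For the remaining types your counterexamples are, after relabelling, the same elements $\beta-\alpha$ as in the table of Proposition \ref{proposition_differentTypes}; your packaging via reduction to the parabolic subsystems $\typeB_3$, $\typeD_4$, $\typeG_2$, together with the support argument showing $|\gamma|_+$ is intrinsic to the parabolic, is a clean way to organize the same computation.
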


For type $\typeA_\ell$ we prove this theorem by comparing a direct formula for the positive length (Proposition \ref{proposition_lengthMapA}) with the formula in Theorem \ref{introTheorem_formula}. For $\typeC_\ell$ we use a different strategy exploiting a triangulation of the root polytope described in \cite{celliniMarietti2}.

The different behavior of types $\typeA_\ell$ and $\typeC_\ell$ with respect to the length map is reflected in the following compatibility condition. Let us denote by $\RP_\Phi^+$ the convex hull of the set $\Phi^+\cup\{0\}$, called the \emph{positive root polytope}, and let also $C(\Phi^+)$ be the non--negative rational cone generated by the positive roots. We say that $\Phi^+$ is \emph{\polyhedral} if $C(\Phi^+)$ is a union of cones generated by subsets of roots of the faces of $\RP_\Phi$. Equivalently, $\Phi^+$ is \polyhedral\ if $\RP_\Phi^+=\RP_\Phi\cap C(\Phi^+)$.

\begin{figure}
\begin{pgfpicture}{0cm}{0cm}{16.5cm}{5cm}
\begin{pgftranslate}{\pgfpoint{2.75cm}{2.5cm}}
\begin{pgfmagnify}{0.7}{0.7}
\pgfsetxvec{\pgfpoint{1.299cm}{-0.75cm}}
\pgfsetyvec{\pgfpoint{0cm}{1.5cm}}
\pgfsetzvec{\pgfpoint{-1.299cm}{-0.75cm}}
\begin{pgfscope}
\color[rgb]{\positiveColor}
\pgfmoveto{\pgfxyz(0,1,1)}
\pgflineto{\pgfxyz(1,1,0)}
\pgflineto{\pgfxyz(1,-1,0)}
\pgflineto{\pgfxyz(0,-1,1)}
\pgflineto{\pgfxyz(-1,0,1)}
\pgflineto{\pgfxyz(0,1,1)}
\pgffill
\end{pgfscope}
\begin{pgfscope}
\color[rgb]{\hiddenColor}
\pgfmoveto{\pgfxyz(-1,1,0)}
\pgflineto{\pgfxyz(-1,0,-1)}
\pgflineto{\pgfxyz(0,-1,-1)}
\pgflineto{\pgfxyz(-1,-1,0)}
\pgflineto{\pgfxyz(-1,0,-1)}
\pgflineto{\pgfxyz(0,1,-1)}
\pgfstroke
\pgfmoveto{\pgfxyz(-1,0,1)}
\pgflineto{\pgfxyz(-1,-1,0)}
\pgflineto{\pgfxyz(0,-1,1)}
\pgfstroke
\pgfmoveto{\pgfxyz(1,0,-1)}
\pgflineto{\pgfxyz(0,-1,-1)}
\pgflineto{\pgfxyz(1,-1,0)}
\pgfstroke
\end{pgfscope}
\begin{pgfscope}
\pgfsetdash{{0.2cm}{0.1cm}}{0cm}
\pgfline{\pgfxyz(0,0,0)}{\pgfxyz(2,2,0)}
\pgfline{\pgfxyz(0,0,0)}{\pgfxyz(0,2,2)}
\pgfline{\pgfxyz(0,0,0)}{\pgfxyz(-1.25,0,1.25)}
\pgfline{\pgfxyz(0,0,0)}{\pgfxyz(0,-1.25,1.25)}
\pgfline{\pgfxyz(0,0,0)}{\pgfxyz(1.25,-1.25,0)}
\end{pgfscope}
\pgfmoveto{\pgfxyz(1,1,0)}
\pgflineto{\pgfxyz(0,1,-1)}
\pgflineto{\pgfxyz(-1,1,0)}
\pgflineto{\pgfxyz(0,1,1)}
\pgflineto{\pgfxyz(1,1,0)}
\pgflineto{\pgfxyz(1,0,-1)}
\pgflineto{\pgfxyz(1,-1,0)}
\pgflineto{\pgfxyz(0,-1,1)}
\pgflineto{\pgfxyz(1,0,1)}
\pgflineto{\pgfxyz(0,1,1)}
\pgflineto{\pgfxyz(-1,0,1)}
\pgflineto{\pgfxyz(0,-1,1)}
\pgfstroke
\pgfmoveto{\pgfxyz(1,1,0)}
\pgflineto{\pgfxyz(1,0,1)}
\pgflineto{\pgfxyz(1,-1,0)}
\pgfstroke
\pgfline{\pgfxyz(1,0,-1)}{\pgfxyz(0,1,-1)}
\pgfline{\pgfxyz(-1,1,0)}{\pgfxyz(-1,0,1)}
\begin{pgfscope}
\color[rgb]{0.5,0.5,0.5}
\pgfcircle[fill]{\pgfxyz(1,1,0)}{\radius}
\pgfcircle[fill]{\pgfxyz(0,1,1)}{\radius}
\pgfcircle[fill]{\pgfxyz(1,0,1)}{\radius}
\pgfcircle[fill]{\pgfxyz(1,-1,0)}{\radius}
\pgfcircle[fill]{\pgfxyz(0,-1,1)}{\radius}
\pgfcircle[fill]{\pgfxyz(-1,0,1)}{\radius}
\end{pgfscope}
\pgfputat{\pgfxyz(1,-1.0,1)}{\pgfbox[center,center]{Type $\typeA_3$}}
\end{pgfmagnify}
\end{pgftranslate}
\begin{pgftranslate}{\pgfpoint{8.25cm}{2.5cm}}
\begin{pgfmagnify}{0.7}{0.7}
\pgfsetxvec{\pgfpoint{1.299cm}{-0.75cm}}
\pgfsetyvec{\pgfpoint{0cm}{1.5cm}}
\pgfsetzvec{\pgfpoint{-1.299cm}{-0.75cm}}
\begin{pgfscope}
\color{lightgray}
\color[rgb]{\positiveColor}
\pgfmoveto{\pgfxyz(-0.5,1,0.5)}
\pgflineto{\pgfxyz(0,1,0)}
\pgflineto{\pgfxyz(1,1,0)}
\pgflineto{\pgfxyz(1,-1,0)}
\pgflineto{\pgfxyz(0,-1,1)}
\pgflineto{\pgfxyz(-1,0,1)}
\pgffill
\end{pgfscope}
\begin{pgfscope}
\color[rgb]{\hiddenColor}
\pgfmoveto{\pgfxyz(-1,1,0)}
\pgflineto{\pgfxyz(-1,0,-1)}
\pgflineto{\pgfxyz(0,-1,-1)}
\pgflineto{\pgfxyz(-1,-1,0)}
\pgflineto{\pgfxyz(-1,0,-1)}
\pgflineto{\pgfxyz(0,1,-1)}
\pgfstroke
\pgfmoveto{\pgfxyz(-1,0,1)}
\pgflineto{\pgfxyz(-1,-1,0)}
\pgflineto{\pgfxyz(0,-1,1)}
\pgfstroke
\pgfmoveto{\pgfxyz(1,0,-1)}
\pgflineto{\pgfxyz(0,-1,-1)}
\pgflineto{\pgfxyz(1,-1,0)}
\pgfstroke
\end{pgfscope}
\begin{pgfscope}
\pgfsetdash{{0.2cm}{0.1cm}}{0cm}
\pgfline{\pgfxyz(0,0,0)}{\pgfxyz(2,2,0)}
\pgfline{\pgfxyz(0,0,0)}{\pgfxyz(0,1.75,0)}
\pgfline{\pgfxyz(0,0,0)}{\pgfxyz(-1.25,0,1.25)}
\pgfline{\pgfxyz(0,0,0)}{\pgfxyz(0,-1.25,1.25)}
\pgfline{\pgfxyz(0,0,0)}{\pgfxyz(1.25,-1.25,0)}
\end{pgfscope}
\pgfmoveto{\pgfxyz(1,1,0)}
\pgflineto{\pgfxyz(0,1,-1)}
\pgflineto{\pgfxyz(-1,1,0)}
\pgflineto{\pgfxyz(0,1,1)}
\pgflineto{\pgfxyz(1,1,0)}
\pgflineto{\pgfxyz(1,0,-1)}
\pgflineto{\pgfxyz(1,-1,0)}
\pgflineto{\pgfxyz(0,-1,1)}
\pgflineto{\pgfxyz(1,0,1)}
\pgflineto{\pgfxyz(0,1,1)}
\pgflineto{\pgfxyz(-1,0,1)}
\pgflineto{\pgfxyz(0,-1,1)}
\pgfstroke
\pgfmoveto{\pgfxyz(1,1,0)}
\pgflineto{\pgfxyz(1,0,1)}
\pgflineto{\pgfxyz(1,-1,0)}
\pgfstroke
\pgfline{\pgfxyz(1,0,-1)}{\pgfxyz(0,1,-1)}
\pgfline{\pgfxyz(-1,1,0)}{\pgfxyz(-1,0,1)}
\begin{pgfscope}
\color[rgb]{0.5,0.5,0.5}
\pgfcircle[fill]{\pgfxyz(1,1,0)}{\radius}
\pgfcircle[fill]{\pgfxyz(0,1,1)}{\radius}
\pgfcircle[fill]{\pgfxyz(1,0,1)}{\radius}
\pgfcircle[fill]{\pgfxyz(1,-1,0)}{\radius}
\pgfcircle[fill]{\pgfxyz(0,-1,1)}{\radius}
\pgfcircle[fill]{\pgfxyz(-1,0,1)}{\radius}
\pgfcircle[fill]{\pgfxyz(1,0,0)}{\radius}
\pgfcircle[fill]{\pgfxyz(0,1,0)}{\radius}
\pgfcircle[fill]{\pgfxyz(0,0,1)}{\radius}
\end{pgfscope}
\pgfputat{\pgfxyz(1,-1.0,1)}{\pgfbox[center,center]{Type $\typeB_3$}}
\end{pgfmagnify}
\end{pgftranslate}
\begin{pgftranslate}{\pgfpoint{13.75cm}{2.5cm}}
\begin{pgfmagnify}{0.7}{0.7}
\pgfsetxvec{\pgfpoint{1.374cm}{-0.569cm}}
\pgfsetyvec{\pgfpoint{0cm}{1.5cm}}
\pgfsetzvec{\pgfpoint{-1.1795cm}{-0.9055cm}}
\begin{pgfscope}
\pgfsetyvec{\pgfpoint{0cm}{2.3cm}}
\begin{pgfscope}
\color{lightgray}
\color[rgb]{\positiveColor}
\pgfmoveto{\pgfxyz(1,0,0)}
\pgflineto{\pgfxyz(0,1,0)}
\pgflineto{\pgfxyz(-1,0,1)}
\pgflineto{\pgfxyz(-0.5,-0.5,0.5)}
\pgflineto{\pgfxyz(0.5,-0.5,0.5)}
\pgffill
\end{pgfscope}
\begin{pgfscope}
\color[rgb]{\hiddenColor}
\pgfmoveto{\pgfxyz(1,0,-1)}
\pgflineto{\pgfxyz(-1,0,-1)}
\pgflineto{\pgfxyz(0,1,0)}
\pgfmoveto{\pgfxyz(-1,0,-1)}
\pgflineto{\pgfxyz(-1,0,1)}
\pgfmoveto{\pgfxyz(-1,0,-1)}
\pgflineto{\pgfxyz(0,-1,0)}
\pgfstroke
\end{pgfscope}
\begin{pgfscope}
\pgfsetdash{{0.2cm}{0.1cm}}{0cm}
\pgfline{\pgfxyz(0,0,0)}{\pgfxyz(1.50,0,0)}
\pgfline{\pgfxyz(0,0,0)}{\pgfxyz(0,1.25,0)}
\pgfline{\pgfxyz(0,0,0)}{\pgfxyz(-1.25,0,1.25)}
\pgfline{\pgfxyz(0,0,0)}{\pgfxyz(-0.7,-0.7,0.7)}
\pgfline{\pgfxyz(0,0,0)}{\pgfxyz(0.75,-0.75,0.75)}
\end{pgfscope}
\pgfline{\pgfxyz(1,0,1)}{\pgfxyz(1,0,-1)}
\pgfline{\pgfxyz(1,0,1)}{\pgfxyz(0,1,0)}
\pgfline{\pgfxyz(1,0,1)}{\pgfxyz(0,-1,0)}
\pgfline{\pgfxyz(1,0,1)}{\pgfxyz(-1,0,1)}
\pgfline{\pgfxyz(-1,0,1)}{\pgfxyz(0,1,0)}
\pgfline{\pgfxyz(-1,0,1)}{\pgfxyz(0,-1,0)}
\pgfline{\pgfxyz(1,0,-1)}{\pgfxyz(0,1,0)}
\pgfline{\pgfxyz(1,0,-1)}{\pgfxyz(0,-1,0)}
\begin{pgfscope}
\color[rgb]{0.5,0.5,0.5}
\pgfcircle[fill]{\pgfxyz(1,0,1)}{\radius}
\pgfcircle[fill]{\pgfxyz(1,0,0)}{\radius}
\pgfcircle[fill]{\pgfxyz(0,1,0)}{\radius}
\pgfcircle[fill]{\pgfxyz(0.5,0.5,0.5)}{\radius}
\pgfcircle[fill]{\pgfxyz(0.5,-0.5,0.5)}{\radius}
\pgfcircle[fill]{\pgfxyz(0,0,1)}{\radius}
\pgfcircle[fill]{\pgfxyz(-1,0,1)}{\radius}
\pgfcircle[fill]{\pgfxyz(-0.5,-0.5,0.5)}{\radius}
\pgfcircle[fill]{\pgfxyz(-0.5,0.5,0.5)}{\radius}
\end{pgfscope}
\end{pgfscope}
\pgfputat{\pgfxyz(1,-1.0,1)}{\pgfbox[center,center]{Type $\typeC_3$}}
\end{pgfmagnify}
\end{pgftranslate}
\end{pgfpicture}
\caption{The rank 3 root polytopes}
\label{figure_rankThreeRootPolytopes}
\end{figure}

Notice that for $\typeA_3$ and $\typeC_3$, $\Phi^+$ is \polyhedral\ while it is not for $\typeB_3$ as one may see in Figure \ref{figure_rankThreeRootPolytopes}. Moreveor it is clear that $\Phi^+$ is not \polyhedral\ for $\typeG_2$ (see for example the tables in \cite{bourbaki}) and it is easy to show that $\Phi^+$ is not \polyhedral\ for $\typeD_4$; further if $\Phi^+$ is \polyhedral\ then also the set of positive roots of a subsystem is \polyhedral. Hence only $\typeA_\ell$ and $\typeC_\ell$ may have a \polyhedral\ positive root set and, indeed, this is proved in \cite{celliniMarietti2}.

So $\Phi^+$ is polyhedral if and only if the positive length map coincides with the length map. This suggests that some result similar to the formula in Theorem \ref{introTheorem_formula} should hold also for $\gamma\longmapsto|\gamma|_+$ using a half--space presentation of $\RP_\Phi^+$.

The paper is organized as follows. In Section \ref{section_rootSystem} we fix some notation for the root systems.

In Section \ref{section_rootPolytope} we review the main results of \cite{celliniMarietti} relevant to our aims. In Section \ref{section_preliminary} we see some preliminary result about faces and subsystems. In Section \ref{section_faceInclusion} we study the face inclusion relation for the root polytope, in particular we describe the pair of adjacent facets, i.e. of facets having maximal intersection. In Section \ref{section_normality} we compute the generators for the monoid $M(F)$ and we study the integral closure and normality property. In this section we prove Theorem \ref{introTheorem_properMinimalElements} and Corollary \ref{introCorollary_faceNormality} and \ref{introCorollary_faceIntegrality}. Finally in Section \ref{section_applicationLength} we prove Theorem \ref{introTheorem_formula}, we give a direct formula for length and positive length in type $\typeA_\ell$ and we prove Corollary \ref{introCorollary_lengths}.

\vskip 0.5cm

\noindent{\bf Acknowledgments.} I would like to thank Mario Marietti, Paola Cellini and Andrea Maffei for useful conversations. I thank also Giovanni Gaiffi and Francesco Brenti for helpful literature references. Finally I gratefully recognize the suggestions of an anonymous referee which led to significant improvements.

\section{Root system notation}\label{section_rootSystem}

Let $\Phi$ be a crystallographic reduced root system of rank $\ell$ in the Euclidean space $E$ whose scalar product is denoted by $(u,v)$ for $u,\,v\in E$. Let also $\Phi\supset\Phi^+\supset\Delta$ be a positive subsystem of $\Phi$ and $\Delta$ the corresponding basis, moreover let $R\doteq\langle\Delta\rangle_\Z$ be the lattice generated by the roots.

For a simple root $\alpha$ let $\coomega_\alpha$ be the corresponding fundamental coweight; so $(\coomega_\alpha,\beta)=\delta_{\alpha,\beta}$ for $\alpha,\,\beta\in\Delta$, or, in other words, the coweights $\coomega_\alpha$ with $\alpha\in\Delta$ are the dual basis of $\Delta$. We denote by $\coalpha$ the dual root $2\alpha/(\alpha,\alpha)$ of a root $\alpha$; further $\omega_\alpha$, $\alpha\in\Delta$, is the dual basis of $\coalpha$, $\alpha\in\Delta$. The scalar product of $E$ is normalized so that, for simply laced systems, $\alpha=\coalpha$ (and so also $\omega_\alpha=\coomega_\alpha$) for all $\alpha\in\Delta$.

Given a subset $A$ of $\Delta$ let $W_A$ be the parabolic subgroup of the Weyl group $W$ of $\Phi$ generated by the simple reflections $s_\alpha$ for $\alpha\in A$. We denote by $W^A$ the set of minimal length representatives of $W/W_A$. For a dominant coweight $\lambda$ we define $W_\lambda$ and $W^\lambda$ as the stabilizer, respectively, the set of minimal length representatives of the quotient $W/W_\lambda$. Clearly $W_\lambda=W_A$ with $A$ the set of all simpe roots $\alpha$ such that $(\lambda,\alpha)=0$.

For an irreducible root system we denote by $\theta$ the highest root of $\Phi$ with respect to $\Delta$ and we define $m_\alpha\in\N$, $\alpha\in\Delta$,  such that $\theta=\sum_{\alpha\in\Delta}m_\alpha\alpha$. If $\Phi$ is not simply laced then $\theta_s$ is the short highest root; we will also write $\theta_l$ for $\theta$. Finally let $\Phi_l$ be the set of long roots and $\Phi_s$ the set of short roots; if $\Phi$ is simply laced then we consider all roots as long.

Any time we need to number the elements of the basis $\Delta$ we use the numbering in \cite{bourbaki}; further all symbols indexed by simple roots are accordingly numbered.

\section{The root polytope}\label{section_rootPolytope}

We define the \emph{root polytope} $\RP_\Phi$ of $\Phi$ as the convex hull in $E$ of $\Phi$.

Given an element $\lambda\in E$ let $H(\lambda)$ be the closed half--space $\{u\in E\,|\,(\lambda,u)\leq 1\}$. Since $\Phi$ is invariant for $E\ni u\longmapsto-u\in E$ and it spans $E$, $0$ is an interior point of $\RP_\Phi$. So, being $\Phi$ also a finite set, there exist elements $\lambda_1,\lambda_2,\ldots,\lambda_r$ such that $\RP_\Phi=\bigcap_{i=1}^rH(\lambda_i)$; this is a \emph{half--space} presentation of $\RP_\Phi$. A \emph{face} $F$ of $\RP_\Phi$ is the set of elements $u\in\RP_\Phi$ such that $(\lambda_i,u)=1$ for $i\in A\subseteq\{1,2,\ldots,r\}$ for some subset $A$; we say that the elements $\lambda_i,\,i\in A$, \emph{define} the face $F$. We denote by $V(F)$ the set $F\cap\Phi$ of roots in $F$; it contains the vertices of $F$ but it is in general a larger set.

Now we recall the main results of \cite{celliniMarietti} relevant to our aims. We assume for the rest of this section that $\Phi$ is irreducible.

The affine root system $\affPhi$ may be defined by adding a suitable node to the Dynkin diagram of $\Phi$ (see \cite{bourbaki}) and a simple root $\alpha_0$ to $\Delta$ obtaining the basis $\affDelta=\Delta\cup\{\alpha_0\}$ of the affine root system. The scalar product of $E$ is extended to a bilinear form by declaring $(\alpha_0,\alpha)=-(\theta,\alpha)$ for all $\alpha\in\Delta$.

Notice that $(\frac{\coomega_\alpha}{m_\alpha},\beta)\leq 1$ for all $\beta\in\Phi$. So, given a subset $A$ of $\Delta$, the set $F(A)$ of the elements $u\in\RP_\Phi$ such that $(\frac{\coomega_\alpha}{m_\alpha},u)=1$ for all $\alpha\in A$ is a face of $\RP_\Phi$; we call it a \emph{standard parabolic} face. Let also $V(A)\doteq V(F(A))$.

The face $F(A)$ is clearly the intersections of the faces $F(\alpha)\doteq F(\{\alpha\})$ for $\alpha\in A$; we call such a face $F(\alpha)$ the \emph{coordinate} face associated with the simple root $\alpha$. We set also $V(\alpha)\doteq V(F(\alpha))$ for short.

In general the map $A\longmapsto F(A)$ from subsets of $\Delta$ to standard parabolic faces fails to be injective. As a first result we want to see how a fixed face may be defined in terms of subsets $A$; so we introduce the following definitions. Given $A\subseteq\Delta$ define $\overline{A}$ as the complement in $\affDelta$ of the connected component of $(\Delta\setminus A)\cup\{\alpha_0\}$ containing $\alpha_0$ and define $\partial A$ as the set of the simple roots $\alpha\in A$ such that there exists $\beta$ in the connected component of $(\Delta\setminus A)\cup\{\alpha_0\}$ containing $\alpha_0$ with $(\alpha,\beta)\neq0$. We define also $A^*\doteq\Delta\setminus\partial A$. It is clear that $\partial A\subseteq A\subseteq\overline{A}$. Moreover

\begin{proposition}\label{proposition_minMaxParabolic}\rm{[Proposition 5.9 in \cite{celliniMarietti}]} $F(B)=F(A)$ if and only if $\partial A\subseteq B\subseteq\overline{A}$.
\end{proposition}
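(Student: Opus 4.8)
The plan is to analyze when two subsets $A, B \subseteq \Delta$ yield the same face $F(A) = F(B)$ by unpacking the definition of $F(A)$ as a set of linear conditions on $\RP_\Phi$. Recall that $u \in F(A)$ means $(\coomega_\alpha/m_\alpha, u) = 1$ for all $\alpha \in A$, and that $(\coomega_\alpha/m_\alpha, \beta) \le 1$ for every root $\beta$. The key observation is that a standard parabolic face $F(A)$ is determined not by $A$ itself but by the set $V(A) = F(A) \cap \Phi$ of roots it contains, equivalently by the set of roots $\beta \in \Phi$ achieving $(\coomega_\alpha/m_\alpha, \beta) = 1$ for all $\alpha \in A$. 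So I would first reduce the statement to: $V(A) = V(B)$ if and only if $\partial A \subseteq B \subseteq \overline{A}$. Since $\theta$ is the unique root with $(\coomega_\alpha/m_\alpha, \theta) = 1$ for all $\alpha \in \Delta$, and more generally writing $\beta = \sum_{\alpha} c_\alpha \alpha$ in the basis, the condition $(\coomega_\alpha/m_\alpha, \beta)=1$ reads $c_\alpha = m_\alpha$; thus $\beta \in V(A)$ iff the coefficient of $\alpha$ in $\beta$ equals $m_\alpha$ for each $\alpha \in A$, i.e. $\beta$ "looks like $\theta$ along $A$".

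Next I would use the structure of the affine Dynkin diagram. The heights $m_\alpha$ together with $m_{\alpha_0} = 1$ are the marks on the affine diagram, and there is a well-known combinatorial description of which roots have prescribed coefficients: the roots $\beta$ with $c_\alpha = m_\alpha$ for $\alpha \in A$ are controlled by the connected component $\Gamma_0$ of $(\Delta \setminus A) \cup \{\alpha_0\}$ containing $\alpha_0$, because one can only "lower" $\theta$ toward such a $\beta$ by subtracting simple roots supported in that component. This is exactly the content that makes $\overline{A}$ — the complement of $\Gamma_0$ in $\affDelta$ — the natural maximal set defining $F(A)$: enlarging $A$ within $\overline{A}$ does not shrink the component $\Gamma_0$, hence does not change $V(A)$. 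Conversely, $\partial A$ captures the minimal set: removing from $A$ any root not in $\partial A$ (i.e. a root in $A^*$ that does not touch $\Gamma_0$) again leaves $\Gamma_0$ unchanged, so $V(\partial A) = V(A)$. The two directions then combine: if $\partial A \subseteq B \subseteq \overline{A}$, then $\Gamma_0$ is the same for $A$ and for $B$, giving $V(B) = V(A)$ and hence $F(B) = F(A)$; and if $F(B) = F(A)$ one recovers the inclusions by comparing the components and using that the roots $\alpha_0$ together with those adjacent to $\Gamma_0$ are precisely detected by $V(A)$.

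The main obstacle I anticipate is the careful verification that $V(A)$ depends only on the component $\Gamma_0 = \Gamma_0(A)$ and, conversely, that $V(A)$ determines $\Gamma_0$ — in particular the claim that one cannot "escape" the component $\Gamma_0$ while lowering $\theta$, and the claim that a simple root $\alpha \in A$ which is adjacent to $\Gamma_0$ (i.e. $\alpha \in \partial A$) is genuinely forced to be a defining condition, not removable. This is where the combinatorics of affine root systems and the explicit marks $m_\alpha$ enter, and where one must be attentive to the low-rank exceptional configurations. Since this is quoted as Proposition 5.9 of \cite{celliniMarietti}, I would in fact simply invoke that reference for the delicate component-tracking argument and present only the reduction $F(A) = F(B) \iff V(A) = V(B) \iff \partial A \subseteq B \subseteq \overline{A}$ in the level of detail appropriate here.
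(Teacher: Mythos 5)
The paper gives no proof of this proposition at all: it is imported verbatim as Proposition 5.9 of Cellini--Marietti, so your decision to defer the component-tracking argument to that reference is exactly what the paper does. Your surrounding reduction is also sound ($F(A)=F(B)$ iff $V(A)=V(B)$ since a face is the convex hull of the roots it contains, and $\beta\in V(A)$ iff its coefficient on each $\alpha\in A$ equals $m_\alpha$, with the component claim ultimately resting on the connectedness of the support of the affine root $\delta-\beta$), so there is nothing to correct.
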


Let $\IRR$ be the set of subsets $A$ of $\Delta$ such that $(\Delta\setminus A)\cup\{\alpha_0\}$ is connected in the Dynking diagram of $\affPhi$. By \ref{proposition_minMaxParabolic} the map $\IRR\ni A\longmapsto F(A)$ is a bijection from $\IRR$ to the set of standard parabolic faces; moreover if $A\supseteq B$ then $F(A)\subseteq F(B)$.

It is clear that not all faces of $\RP_\Phi$ are standard parabolic. Since $W$ acts on the faces of $\RP_\Phi$, we consider the faces $F(A;\tau)\doteq\tau F(A)$, for $A$ a subset of $\Delta$ and $\tau\in W$, that we call \emph{parabolic} faces and we define analogously $V(A;\tau)\doteq F(A;\tau)\cap\Phi$.

\begin{proposition}\label{proposition_faces}\rm{[Lemma 5.5 and Propositions 5.10, 5.11 in \cite{celliniMarietti}]} The orbits of $W$ on the set of faces of $\RP_\Phi$ are in bijection with $\IRR$; so any face is parabolic. The codimension of the face $F(A)$, $A\in\IRR$, is $|A|$ and its stabilizer in $W$ is $W_{A^*}$. Hence the faces of the root polytope are $F(A;\tau)$ with $A\in\IRR$ and $\tau\in W^{A^*}$.
\end{proposition}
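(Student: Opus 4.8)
The statement to prove is Proposition \ref{proposition_faces}, which combines Lemma 5.5 and Propositions 5.10, 5.11 from \cite{celliniMarietti}. It asserts three things: (1) the $W$-orbits on faces of $\RP_\Phi$ are in bijection with $\IRR$, so every face is parabolic; (2) the codimension of $F(A)$ for $A\in\IRR$ is $|A|$; (3) the stabilizer of $F(A)$ in $W$ is $W_{A^*}$, whence the faces are exactly the $F(A;\tau)$ with $A\in\IRR$ and $\tau\in W^{A^*}$.

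My plan is to build this up from the structure of $\RP_\Phi$ as a reflection-invariant polytope. First I would establish that every face of $\RP_\Phi$ is in the $W$-orbit of a standard parabolic face. The key point is that the fundamental domain for the $W$-action on $E$ is the dominant cone $\{u : (\coomega_\alpha, u)\ge 0\ \forall\alpha\in\Delta\}$, and any face $F$ meets this cone in its relative interior at some point $u$; the dominance of $u$ together with the half-space presentation $\RP_\Phi=\bigcap_\alpha H(\coomega_\alpha/m_\alpha)$ restricted to the dominant chamber (here one needs that $(\coomega_\alpha/m_\alpha,\beta)\le 1$ for all $\beta\in\Phi$, with equality characterizing when $\beta$ lies in the relevant facet) forces the supporting hyperplanes through $u$ to be among the $\{(\coomega_\alpha/m_\alpha,\cdot)=1\}$. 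Hence $F$ is $W$-conjugate to $F(A)$ where $A=\{\alpha : (\coomega_\alpha/m_\alpha, u)=1\}$. Then Proposition \ref{proposition_minMaxParabolic} lets me replace $A$ by its unique representative in $\IRR$, giving a well-defined surjection from $\IRR$ to $W$-orbits.

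Next I would compute the codimension and the stabilizer simultaneously, since they are linked. For $A\in\IRR$, the affine span of $F(A)$ is cut out inside the affine span of $\RP_\Phi$ (which is all of $E$) by the equations $(\coomega_\alpha, u)=m_\alpha$ for $\alpha\in A$; these are linearly independent because the $\coomega_\alpha$ are part of a basis, so $\codim F(A)\le |A|$. For the reverse inequality I must exhibit enough roots in $V(A)$ to span a space of the right dimension — this is where the connectedness condition defining $\IRR$ enters, guaranteeing $F(A)$ is genuinely a face of that codimension and not accidentally smaller-dimensional or empty. For the stabilizer: $\tau\in W$ fixes $F(A)$ iff $\tau$ permutes the supporting hyperplanes of $\RP_\Phi$ through $F(A)$; since $\partial A$ is the minimal defining set (by Proposition \ref{proposition_minMaxParabolic}), $\Stab_W F(A)$ must permute $\{\coomega_\alpha/m_\alpha : \alpha\in\partial A\}$, equivalently fix the dominant point $u$ in the relative interior of $F(A)$, which has $(\coomega_\alpha,u)=0$ precisely for $\alpha\notin\partial A$, i.e.\ for $\alpha\in A^*$. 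By the standard description of stabilizers of points in a chamber (Chevalley's lemma), $\Stab_W(u)=W_{A^*}$.

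Finally, assembling these: the $W$-set of faces conjugate to $F(A)$ is $W/\Stab_W F(A) = W/W_{A^*}$, and choosing minimal-length coset representatives gives the enumeration by $\tau\in W^{A^*}$; distinctness of orbits for distinct $A\in\IRR$ follows from the uniqueness in Proposition \ref{proposition_minMaxParabolic} together with the fact that a $W$-conjugacy between $F(A)$ and $F(B)$ would, after moving both to touch the dominant chamber, force $\partial A$ and $\partial B$ to determine the same face and hence (via $\IRR$) $A=B$. I expect the main obstacle to be the reverse codimension inequality $\codim F(A)\ge |A|$ — equivalently, showing that the roots in $V(A)$ affinely span a subspace of dimension exactly $\ell-|A|$; this is the combinatorial heart of Cellini--Marietti's analysis and is precisely where the hypothesis $A\in\IRR$ (connectedness of $(\Delta\setminus A)\cup\{\alpha_0\}$) is indispensable, ruling out the degenerate situation where deleting $A$ disconnects the diagram and $F(A)$ fails to be a face of the expected dimension.
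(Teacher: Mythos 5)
This proposition is not proved in the paper at all: it is imported verbatim from Cellini--Marietti (Lemma 5.5 and Propositions 5.10, 5.11 of \cite{celliniMarietti}), so there is no internal argument to measure yours against. Judged on its own terms, your outline has the right overall shape but contains genuine gaps. The most serious one you name yourself: you defer the inequality $\mathrm{codim}\,F(A)\geq|A|$, i.e.\ that the roots in $V(A)$ affinely span a subspace of dimension exactly $\ell-|A|$. That is the combinatorial heart of the statement and exactly where the hypothesis $A\in\IRR$ does its work (via the existence of the root $\beta_A$ of Remark \ref{remark_betaA} and root strings inside $V(A)$, in the spirit of the paper's Lemma \ref{lemma_faceLattice}); leaving it as an expectation means the proposition is not proved. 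Second, your stabilizer computation conflates pairings with coweights and with roots. Chevalley's lemma identifies the stabilizer of a dominant point $u$ as the group generated by the $s_\alpha$ with $(\alpha,u)=0$, so what you need is a canonical $\Stab_W F(A)$--fixed point of $F(A)$ --- e.g.\ the barycenter $b(V(A))$ as in Proposition \ref{proposition_barycenter} --- satisfying $(b,\alpha)=0$ for $\alpha\in A^*$ and $(b,\alpha)>0$ for $\alpha\in\partial A$. The condition you wrote, $(\coomega_\alpha,u)=0$ precisely for $\alpha\notin\partial A$, is false (already in type $\typeA_3$ with $A=\{\alpha_2\}$ the barycenter pairs to $1/2$ with $\coomega_1$) and is in any case not the hypothesis Chevalley's lemma requires; also ``stabilizes the face'' is not the same as ``fixes an arbitrary relative-interior point,'' which is why one must pass to the barycenter.

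A smaller but real issue in your first step: a face of $\RP_\Phi$ need not meet the dominant cone, so one must first translate by $W$; and even once a relative-interior point $u$ is dominant, the face it determines is cut out by \emph{all} functionals of the presentation in Proposition \ref{proposition_halfSpaces} that are tight at $u$, including non-standard ones of the form $\tau\coomega_\alpha/m_\alpha$. Showing that these do not cut the face down below $F(A)$ with $A=\{\alpha:(\coomega_\alpha,u)=m_\alpha\}$ is not automatic from dominance alone and is part of what Cellini--Marietti actually prove. Until the codimension lower bound and the stabilizer argument are repaired, the proposal is a plausible roadmap rather than a proof.
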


In particular the facets (i.e. the faces of maximal dimension, equivalently those spanning an affine subspace of $E$ of dimension $\ell-1$) of $\RP_\Phi$ are in the orbits of the coordinate faces $F(\alpha)$ with $\alpha$ simple root such that $(\Delta\setminus\{\alpha\})\cup\{\alpha_0\}$ is connected; we call such simple roots \emph{maximal}. Moreover we find at once the following half--space description of the root polytope.

\begin{proposition}\label{proposition_halfSpaces}\rm{[Corollary 5.13 in \cite{celliniMarietti}]} $\RP_\Phi=\cap H(\frac{\tau\coomega_\alpha}{m_\alpha})$ where $\alpha$ runs in the set of maximal roots and $\tau\in W^{\Delta\setminus\{\alpha\}}$.
\end{proposition}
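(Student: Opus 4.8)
The plan is to deduce the half--space description directly from the two preceding propositions together with the elementary fact that $(\frac{\coomega_\alpha}{m_\alpha},\beta)\leq 1$ for all $\beta\in\Phi$, noted just before the definition of standard parabolic faces. First I would record the easy inclusion: since $0$ is interior to $\RP_\Phi$ and $\RP_\Phi$ is the convex hull of $\Phi$, any half--space $H(\mu)$ containing $\Phi$ contains $\RP_\Phi$; applying $\tau\in W$ to the inequality $(\frac{\coomega_\alpha}{m_\alpha},\beta)\leq 1$ and using the $W$--invariance of $\Phi$ and of the scalar product, we get $(\frac{\tau\coomega_\alpha}{m_\alpha},\beta)=(\frac{\coomega_\alpha}{m_\alpha},\tau^{-1}\beta)\leq 1$ for all $\beta\in\Phi$, hence $\RP_\Phi\subseteq H(\frac{\tau\coomega_\alpha}{m_\alpha})$ for every maximal root $\alpha$ and every $\tau\in W$. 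So $\RP_\Phi\subseteq\bigcap H(\frac{\tau\coomega_\alpha}{m_\alpha})$ where the intersection is over the indicated index set.

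For the reverse inclusion I would argue that this finite family of half--spaces already presents $\RP_\Phi$, by checking that every facet of $\RP_\Phi$ arises as the boundary hyperplane of one of them. By Proposition \ref{proposition_faces}, a facet has codimension $1$, hence is of the form $F(A;\tau)$ with $A\in\IRR$ and $|A|=1$, i.e. $A=\{\alpha\}$ with $\alpha$ a maximal root, and $\tau\in W^{A^*}$. The facet $F(\alpha)$ lies on the hyperplane $(\frac{\coomega_\alpha}{m_\alpha},u)=1$, so $\tau F(\alpha)$ lies on $(\frac{\tau\coomega_\alpha}{m_\alpha},u)=1$, the boundary of $H(\frac{\tau\coomega_\alpha}{m_\alpha})$. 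Thus each facet of $\RP_\Phi$ is cut out by a member of our family, and since any polytope equals the intersection of the half--spaces bounded by the supporting hyperplanes of its facets, we obtain $\bigcap H(\frac{\tau\coomega_\alpha}{m_\alpha})\subseteq\RP_\Phi$, completing the proof. One should also note that it suffices to let $\tau$ range over $W^{\Delta\setminus\{\alpha\}}$ rather than all of $W$: indeed $A^*=\Delta\setminus\partial\{\alpha\}$, and $W_{A^*}$ fixes $\coomega_\alpha$ up to the relevant stabilizer, so coset representatives modulo $W_{\Delta\setminus\{\alpha\}}\supseteq\Stab(\coomega_\alpha)$ already produce all distinct functionals $\tau\coomega_\alpha$; any two $\tau$ in the same coset give the same half--space.

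The main obstacle, such as it is, is purely bookkeeping: one must be careful that the functionals $\frac{\tau\coomega_\alpha}{m_\alpha}$ with $\tau\in W^{\Delta\setminus\{\alpha\}}$ are in bijection with the facets, i.e. that no facet is missed and that the hyperplane really is supporting (the inequality is tight on $F(\alpha)$, which is nonempty and full--dimensional in its hyperplane because $\theta\in F(\alpha)$ when $\alpha$ is maximal). All of this is immediate from Propositions \ref{proposition_faces} and \ref{proposition_minMaxParabolic} and the normalization $(\coomega_\alpha,\theta)=m_\alpha$; there is no genuine difficulty, which is why the statement is quoted as a corollary in \cite{celliniMarietti}.
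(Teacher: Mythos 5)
Your proof is correct and matches the approach the paper intends: the paper gives no proof of its own, quoting the statement as Corollary 5.13 of \cite{celliniMarietti} and remarking that it follows at once from the classification of facets in Proposition \ref{proposition_faces}, and your two inclusions (the $W$--invariant bound $(\frac{\tau\coomega_\alpha}{m_\alpha},\beta)\leq1$ on $\Phi$, plus the fact that every facet is some $F(\alpha;\tau)$ with $\alpha$ maximal and $\tau\in W^{\Delta\setminus\{\alpha\}}$ lying on the corresponding supporting hyperplane) are exactly that deduction. One harmless slip: you write $W_{\Delta\setminus\{\alpha\}}\supseteq\Stab(\coomega_\alpha)$ where the direction your argument actually uses is $W_{\Delta\setminus\{\alpha\}}\subseteq\Stab(\coomega_\alpha)$; since the two groups are in fact equal (the stabilizer of the dominant coweight $\coomega_\alpha$ is the standard parabolic generated by the simple reflections fixing it), nothing breaks.
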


\begin{remark}\label{remark_betaA}
Let us recall for later use that, by Corollary 5.8 and Proposition 5.4 in \cite{celliniMarietti}, there exists a long root $\beta_A\in V(A)$ such that $V(A)$ is the set of the roots $\beta\in\Phi$ such that $\beta\geq\beta_A$, where $\geq$ is the dominant order; moreover $(\coomega_\alpha,\beta_A)=m_\alpha$ for all $\alpha\in\overline{A}$ and $(\coomega_\alpha,\beta_A)<m_\alpha$ for all $\alpha\not\in\overline{A}$.
\end{remark}

\section{Preliminary results about faces and subsystems}\label{section_preliminary}

In this section we introduce some notation and see various preliminary results needed in the subsequent sections.

Let $F$ be a face of the root polytope $\RP_\Phi$ of the irreducible root system $\Phi$. We define $N(F)$ as the monoid generated by $V(F)$, $Z(F)$ as the lattice generated by $V(F)$, $M(F)$ as the intersection $C(V(F))\cap R$, where $C(V(F))$ is the set of all non--negative rational linear combinations of $V(F)$. We will write also $N(V(F))$ for $N(F)$ and $Z(V(F))$ for $Z(F)$. In order to compare the two monoids $M(F)$ and $N(F)$ we define the following relation on $M(F)$: $v\leq_F u$ if $u-v\in N(F)$.

\begin{lemma}\label{lemma_order} The relation $\leq_F$ is an order on $M(F)$. Moreover $M(F)$ is the union of $\gamma+N(F)$ where $\gamma$ runs in the set of $\leq_F$--minimal elements.
\end{lemma}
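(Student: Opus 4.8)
The statement has two parts: that $\leq_F$ is a partial order on $M(F)$, and that $M(F)$ decomposes as the union of $\gamma + N(F)$ over the $\leq_F$-minimal elements $\gamma$. For the first part, reflexivity is immediate since $0 \in N(F)$, and transitivity follows because $N(F)$ is closed under addition. The only nontrivial axiom is antisymmetry: if $u - v \in N(F)$ and $v - u \in N(F)$, I need $u = v$. The plan is to apply a strictly positive linear functional. Since $F$ is a face of $\RP_\Phi$, every root $\beta \in V(F)$ lies on a supporting hyperplane $\{(\lambda, \cdot) = 1\}$ with $\lambda \neq 0$ and, more to the point, the cone $C(V(F))$ is a \emph{pointed} (salient) cone: it is generated by roots all lying in the proper face $F$ of the polytope $\RP_\Phi$, which contains $0$ in its interior, so $C(V(F))$ contains no line. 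Concretely, taking any $\lambda_F$ as in the introduction (a functional with $(\lambda_F, u) = 1$ on $F$ and $< 1$ elsewhere on $\RP_\Phi$), one checks $(\lambda_F, \beta) = 1 > 0$ for every $\beta \in V(F)$, hence $(\lambda_F, \cdot)$ is strictly positive on $N(F) \setminus \{0\}$. Then $u - v \in N(F)$ and $v - u \in N(F)$ force $(\lambda_F, u - v) \geq 0$ and $(\lambda_F, v - u) \geq 0$, so $(\lambda_F, u - v) = 0$, whence $u - v = 0$ as it lies in $N(F)$.

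For the second part, I must show that every $u \in M(F)$ can be written as $\gamma + n$ with $\gamma$ a $\leq_F$-minimal element of $M(F)$ and $n \in N(F)$, and conversely that $\gamma + N(F) \subseteq M(F)$ for every such $\gamma$. The reverse inclusion is trivial since $N(F) \subseteq M(F)$ and $M(F) = C(V(F)) \cap R$ is closed under adding elements of $N(F)$ (both the cone condition and the lattice condition are preserved). For the forward inclusion, fix $u \in M(F)$ and consider the set $\{v \in M(F) : v \leq_F u\}$; it is nonempty (it contains $u$) and I want to extract a $\leq_F$-minimal element from it. This is where I expect the main subtlety: I need to know that $\leq_F$ has no infinite strictly descending chains below $u$, equivalently that $\{v \in M(F) : v \leq_F u\}$ is finite. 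I would prove this again using the functional $(\lambda_F, \cdot)$: if $v \leq_F u$ then $u - v \in N(F)$, so $(\lambda_F, v) = (\lambda_F, u) - (\lambda_F, u - v) \leq (\lambda_F, u)$, and also $(\lambda_F, v) = (\lambda_F, v - 0)$; but $v$ itself need not be in $N(F)$, so I cannot immediately say $(\lambda_F, v) \geq 0$. To handle this, note $v \in M(F) = C(V(F)) \cap R \subseteq C(V(F))$, and $(\lambda_F, \cdot)$ is nonnegative on the whole cone $C(V(F))$ (it equals $1$ on the generators $V(F)$ and the cone is their nonnegative span), so in fact $0 \leq (\lambda_F, v) \leq (\lambda_F, u)$. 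Thus all such $v$ lie in the bounded region $\{x \in C(V(F)) : 0 \leq (\lambda_F, x) \leq (\lambda_F, u)\}$ intersected with the discrete lattice $R$; since $C(V(F))$ is a pointed polyhedral cone, this slab is compact, so the intersection with $R$ is finite. Hence a $\leq_F$-minimal element $\gamma$ of $\{v \in M(F) : v \leq_F u\}$ exists, and by transitivity $\gamma$ is also $\leq_F$-minimal in all of $M(F)$; writing $u = \gamma + (u - \gamma)$ with $u - \gamma \in N(F)$ finishes the proof.

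The genuinely load-bearing point throughout is the pointedness of the cone $C(V(F))$ and the existence of a linear functional positive on it — both of which come from $F$ being a proper face of a polytope with $0$ in its interior, so $V(F)$ lies strictly on one side of a hyperplane through the origin. Once that is in hand, antisymmetry and the finiteness needed for the well-foundedness argument are both routine consequences. I would state the pointedness and the positivity of $\lambda_F$ on $C(V(F))$ explicitly as the first lines of the proof, then dispatch the order axioms, then run the finiteness-of-lower-set argument to extract minimal elements.
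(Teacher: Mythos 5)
Your proof is correct and follows essentially the same approach as the paper: both arguments rest on the defining functional $\lambda_F$ of the face, which is identically $1$ on $V(F)$ and hence strictly positive on $N(F)\setminus\{0\}$, giving antisymmetry (the paper phrases this as strong convexity of $N(F)$) and well-foundedness. The only cosmetic difference is in the finiteness step: the paper notes that each strict descent drops $(\lambda_F,\cdot)$ by at least $1$ while the value stays nonnegative on $M(F)$, whereas you bound the lower set inside a compact slab of the pointed cone and intersect with the lattice; both are routine and equivalent here.
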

\begin{proof} The relation is clearly transitive since $N(V)$ is a monoid. If $u\leq_F v$ and $v\leq_F u$ then $u-v\in N(F)\cap(-N(F))$, but only $0$ belongs to such intersection since $N(F)$ is strongly convex being $V(F)$ defined as the $1$--level set on $\Phi$ of some functionals on $E$; so $u=v$.

Now let $\lambda$ be an element of $E$ appearing in a half--space presentation of $\RP_\Phi$ such that $(\lambda,w)=1$ for all $w\in F$. In particular if $u\leq_F v$, $u\neq v$ we have $(\lambda,u)\leq(\lambda,v)-1$. Hence any strictly descending chain of elements of $M(F)$ must be finite since for all $u\in M(F)$ we have $(\lambda,u)\geq 0$. The last claim is now clear.
\end{proof}

In the following lemmas we relate the faces of the root polytope and the root subsystems. We sometime add the root system symbol as a subscript for clarity.

\begin{lemma}\label{lemma_subfaceSubsystem}
Let $F\doteq F(A)$ be a face of the root polytope with $A\in\IRR$, let $E'$ be the vector subspace of $E$ generated by $F$ and let $\Phi'$ be the root subsystem $\Phi\cap E'$ of $\Phi$. Then there exists $\gamma_A\in\Phi$ such that
\begin{itemize}
\item[(i)] $(\Delta\setminus A)\cup\{\gamma_A\}$ is a basis of $\Phi'$,
\item[(ii)] $F=F_{\Phi'}(\{\gamma_A\})$.
\end{itemize}
\end{lemma}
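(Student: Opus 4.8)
The plan is to reduce everything to the structure already recorded in Remark \ref{remark_betaA}. By Proposition \ref{proposition_faces} the face $F=F(A)$ has codimension $|A|$, so the subspace $E'$ it spans has dimension $\ell-|A|$, and hence $\Phi'\doteq\Phi\cap E'$ is a root subsystem of rank exactly $\ell-|A|$. The candidate basis for $\Phi'$ will be the $\ell-|A|$ simple roots in $\Delta\setminus A$ together with one extra root $\gamma_A$ built from $\beta_A$; the natural guess, by analogy with how $\alpha_0=-\theta$ is adjoined in the affine picture, is $\gamma_A\doteq-\beta_A$, where $\beta_A$ is the long root of Remark \ref{remark_betaA}. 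I would first check $\gamma_A\in\Phi'$, i.e. that $\beta_A$ lies in $E'$: since $(\coomega_\alpha,\beta_A)=m_\alpha$ for all $\alpha\in\overline A\supseteq A$, the root $\beta_A$ lies on all the supporting hyperplanes $(\coomega_\alpha/m_\alpha,\cdot)=1$ cutting out $F$, so $\beta_A\in V(F)\subseteq F\subseteq E'$, giving $\gamma_A\in\Phi\cap E'=\Phi'$. The same computation shows every $\alpha\in\Delta\setminus A$ lies in $E'$ (indeed $\alpha\in V(A)$ because $\alpha\geq\beta_A$ fails in general — here I must instead argue that $\Delta\setminus A$ spans a face-parallel subspace; more carefully, the roots of $V(A)$ all satisfy $(\coomega_\alpha/m_\alpha,\cdot)=1$ on $A$, and the simple roots not in $A$ together with $\beta_A$ are among them or differ from them by elements of $E'$), so that $(\Delta\setminus A)\cup\{\gamma_A\}\subseteq E'$.

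For part (i) the key point is linear independence and the fact that these vectors generate $\Phi'$ as a root system with this set as a basis. Linear independence is immediate: $\Delta\setminus A$ is linearly independent, and $\gamma_A=-\beta_A$ cannot lie in their span because $\beta_A$ is a positive root with $(\coomega_\alpha,\beta_A)=m_\alpha>0$ for $\alpha\in A$ (as $A\subseteq\overline A$), whereas every element of $\langle\Delta\setminus A\rangle$ pairs to $0$ with $\coomega_\alpha$ for $\alpha\in A$. That gives $\ell-|A|+1$ vectors spanning the $(\ell-|A|)$-dimensional space $E'$, wait — that is one too many, so in fact I expect the correct statement to be that $\gamma_A$ replaces the ``missing'' simple root, and the intended claim of (i) is that $(\Delta\setminus A)\cup\{\gamma_A\}$ is an affine-type adjunction: it is a basis of the $\Z$-span $Z(F)$, or it is a basis of $\Phi'$ in the sense that after discarding the affine relation it is a genuine simple system. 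I would reconcile this by invoking Remark \ref{remark_betaA}'s description $V(A)=\{\beta\in\Phi:\beta\geq\beta_A\}$: translating by $-\beta_A$, the set $V(A)-\beta_A$ sits inside $\langle\Delta\setminus A\rangle_{\N}$-like cone, and this exhibits $\beta_A$ as a lowest element, so $\{-\beta_A\}\cup(\Delta\setminus A)$ plays the role of simple roots of $\Phi'$ exactly as $\{\alpha_0\}\cup\Delta$ does for the affine system. Concretely I would verify the three defining properties of a simple system for $\Phi'$: every root of $\Phi'$ is an $\N$- or $-\N$-combination (using the dominant order and $V(A)=\{\beta\geq\beta_A\}$ for the ``positive'' part, and negating for the other), the pairwise inner products have the right signs, and the count matches the rank.

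For part (ii) I would argue both inclusions. That $F\subseteq F_{\Phi'}(\{\gamma_A\})$: by Remark \ref{remark_betaA} every root $\beta\in V(F)$ satisfies $\beta\geq\beta_A$, i.e. $\beta-\beta_A$ is a non-negative integer combination of simple roots, and since $\beta,\beta_A\in E'$ this combination only involves $\Delta\setminus A$; translating, $\beta$ expressed in the $\Phi'$-basis $(\Delta\setminus A)\cup\{\gamma_A\}$ has $\gamma_A$-coefficient exactly $1$ (namely $\beta=-\gamma_A+\sum_{\alpha\in\Delta\setminus A}c_\alpha\alpha$ with $c_\alpha\geq0$), which by the analogue of the $\coomega_\alpha/m_\alpha$ description inside $\Phi'$ means $\beta$ lies on the coordinate facet $F_{\Phi'}(\{\gamma_A\})$. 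Conversely $F_{\Phi'}(\{\gamma_A\})\subseteq F$: a point of the former is a convex combination of $\Phi'$-roots whose $\gamma_A$-coordinate is $1$; such roots are precisely those $\geq\beta_A$, which by Remark \ref{remark_betaA} form $V(A)=V(F)$, so the convex combination lies in $F$. The main obstacle I anticipate is purely bookkeeping: pinning down the precise sense in which $(\Delta\setminus A)\cup\{\gamma_A\}$ is a ``basis'' of $\Phi'$ — since the rank of $\Phi'$ is $\ell-|A|$ but the displayed set has $\ell-|A|+1$ elements, one must realize this is the affine-style presentation of $\Phi'$ coming from the node $\gamma_A$, or else interpret ``basis'' here as generating the lattice/cone with the stated $1$-coordinate property. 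Once that interpretation is fixed, every remaining step is a direct translation of Remark \ref{remark_betaA} by $-\beta_A$.
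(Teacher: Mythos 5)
There is a genuine gap, and it starts with a dimension count. Since every point of $F=F(A)$ satisfies $(\coomega_\alpha/m_\alpha,u)=1$ for $\alpha\in A$, the affine hull of $F$ does not contain the origin, so the \emph{vector} subspace $E'$ it generates has dimension $(\ell-|A|)+1$, not $\ell-|A|$. Hence $\Phi'$ has rank $\ell-|A|+1$ and the set $(\Delta\setminus A)\cup\{\gamma_A\}$ has exactly the right cardinality to be a genuine simple system of the finite root system $\Phi'$ --- there is no ``one too many''. Your ``wait'' moment, and the subsequent reinterpretation of (i) as an affine-type adjunction or a statement about the lattice $Z(F)$, is therefore a misreading of the lemma caused by this off-by-one error; the paper means an honest basis, namely the one associated to the positive system $\Phi'^+\doteq\Phi^+\cap E'$, which contains $\Delta\setminus A$ and is completed by a uniquely determined \emph{positive} root $\gamma_A$. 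Your candidate $\gamma_A=-\beta_A$ is also wrong: already for $\typeA_2$ with $A=\{\alpha_1\}$ one has $\beta_A=\alpha_1$, $E'=E$, $\Phi'=\Phi$, and $\{-\alpha_1,\alpha_2\}$ is not a basis of $\typeA_2$ (the correct choice is $\gamma_A=\alpha_1$); in the example $\typeA_3$, $A=\{\alpha_1,\alpha_2\}$ one gets $\gamma_A=\beta_A=\alpha_1+\alpha_2$, so no uniform sign flip of $\beta_A$ works.

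The second missing ingredient is the actual content of part (ii). Writing $\beta_A=\sum_{\delta\in\Delta\setminus A}a_\delta\delta+a\gamma_A$ and letting $\theta'=\sum m'_\delta\delta+m'\gamma_A$ be the highest root of $\Phi'$, the facet $F_{\Phi'}(\{\gamma_A\})$ is cut out by the condition that the $\gamma_A$-coordinate equal $m'$, so one must prove $a=m'$ before concluding that $\beta_A$ (and hence all of $V(A)=\{\beta\geq\beta_A\}$) lies on it. The paper does this by comparing the coefficients $m_\alpha$, $\alpha\in A$, of $\beta_A$ and of $\theta\geq\theta'$ expanded through $\gamma_A=\sum c_\alpha\alpha$, getting $ac_\alpha=m_\alpha$ and $m'c_\alpha\leq m_\alpha$, hence $m'\leq a$, while $\beta_A\leq\theta'$ gives $a\leq m'$. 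Your sketch asserts without justification that roots of $V(F)$ have ``$\gamma_A$-coefficient exactly $1$'' and never engages with the normalization $m'$, so even after fixing the choice of $\gamma_A$ the argument for (ii) is incomplete.
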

\begin{proof}
\begin{itemize}
\item[(i)] Let $\Phi'^+\doteq\Phi^+\cap E'$; it is a positive subsystem of $\Phi'$. The basis of $\Phi'$ corresponding to $\Phi'^+$ contains $\Delta\setminus A$ since $\Delta\setminus A$ is a subset of the basis $\Delta$ of $\Phi$ corresponding to $\Phi^+$.

By Proposition \ref{proposition_faces} the dimension of $E'$ is $|\Delta|-|A|+1$; hence there exists an uniquely determined $\gamma_A\in\Phi'^+$ such that $(\Delta\setminus A)\cup\{\gamma_A\}$ is the basis of $\Phi'$ corresponding to $\Phi'^+$.
\item[(ii)] Notice that the subsystem $\Phi'$ is irreducible by (1) of Corollary 4.5 in \cite{celliniMarietti}. So let $\theta'$ be the highest root of $\Phi'$ with respect to $\Phi'^+$ and let $\theta'=\sum_{\delta\in\Delta\setminus A}m'_\delta\delta+m'\gamma_A$ for some non--negative integers $m'_\delta$, $\delta\in\Delta\setminus A$ and $m'$. Further $\beta_A\in\Phi'^+$ and so we may write $\beta_A=\sum_{\delta\in\Delta\setminus A}a_\delta\delta+a\gamma_A$ for some non--negative integers $a_\delta$, $\delta\in\Delta\setminus A$, and $a$. We claim that $a=m'$.

Indeed suppose that $\gamma_A=\sum_{\alpha\in\Delta}c_\alpha\alpha$ for some non--negative integers $a_\alpha$, $\alpha\in\Delta$. Then $\beta_A=\sum_{\delta\in\Delta\setminus A}(a_\delta+c_\delta)\delta+\sum_{\alpha\in A}(ac_\alpha)\alpha$ and we find that $ac_\alpha=m_\alpha$ for any $\alpha\in A$ since $(\coomega_\alpha,\beta_A)=m_\alpha$ for any $\alpha\in A$. (We want to stress that here and in the remaing of this proof, a coweight $\coomega_\alpha$ with $\alpha\in A$ is always related to $\Phi$ and \emph{not} to $\Phi'$; i.e. $\coomega_\alpha$ is an element of the dual basis of the basis $\Delta$ of $\Phi$.)

Now notice that $\theta\geq\theta'=\sum_{\delta\in\Delta\setminus A}(m'_\delta+m'c_\delta)\delta+\sum_{\alpha\in A}(m'c_\alpha)\alpha$. Hence $m'c_\alpha\leq m_\alpha$ for any $\alpha\in A$. So $m'\leq a$ using $ac_\alpha=m_\alpha$.

On the other hand $\beta_A\leq\theta'$ clearly implies that $a\leq m'$ and so we have proved our claim that $a=m'$.

We may now easily conclude the proof of the lemma. Indeed since $a=m'$ we have $\beta_A\in F_{\Phi'}(\{\gamma_A\})$, hence $F\subseteq F_{\Phi'}(\{\gamma_A\})$ since $F$ is the set of all roots $\beta\in\Phi$ such that $\beta\geq\beta_A$.

Finally let $\beta\in F_{\Phi'}(\{\gamma_A\})$ and $\alpha\in A$. We have $(\coomega_\alpha,\beta)=(\coomega_\alpha,m'\gamma_A)=(\coomega_\alpha,a\gamma_A)=ac_\alpha=m_\alpha$ and so $\beta\in F$.
\end{itemize}
\end{proof}

\begin{lemma}\label{lemma_subface} If $F'$ is a subface of the face $F$ of $\RP_\Phi$ and $E'$ is the subspace of $E$ spanned by $F'$ then $F'$ is a facet of $\RP_{\Phi'}$ and $V_\Phi(F)\cap E'=V_{\Phi'}(F')$ where $\Phi'$ is the root subsystem $\Phi\cap E'$ of $\Phi$.
\end{lemma}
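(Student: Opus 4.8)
The plan is to reduce everything to the previous lemma, Lemma~\ref{lemma_subfaceSubsystem}, by first replacing $F$ with a \emph{standard parabolic} face. Since $W$ acts on the faces of $\RP_\Phi$ and permutes root subsystems compatibly, we may pick $\tau\in W$ with $\tau F = F(A)$ for some $A\in\IRR$ (Proposition~\ref{proposition_faces}); applying $\tau$ to $F'$, to $E'$, and to $\Phi'$ we may thus assume $F=F(A)$ is standard parabolic. Write $E''$ for the span of $F$ and $\Phi''=\Phi\cap E''$; by Lemma~\ref{lemma_subfaceSubsystem} we have $F=F_{\Phi''}(\{\gamma_A\})$ for a root $\gamma_A$ with $(\Delta\setminus A)\cup\{\gamma_A\}$ a basis of $\Phi''$. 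Now $F'$ is a proper subface of $F$, hence a face of $\RP_{\Phi''}$ as well (it is cut out inside $E''$ by the same supporting hyperplanes), and $E'\subseteq E''$.

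Next I would handle the "facet" claim. A subface $F'$ of $F$ that spans a hyperplane $E'$ of the $W$-orbit we are considering — more precisely, the statement should be read as: $F'$ is a \emph{facet} of $\RP_{\Phi'}$, where $\Phi'=\Phi\cap E'$. Since $F'\subseteq\RP_{\Phi'}$ and $F'$ spans $E'$ affinely (it spans the linear space $E'$, and $0\notin F'$ unless $F'$ is not a proper face — but a face of $\RP_\Phi$ through $0$ would be all of $\RP_\Phi$, contradicting that $F'$ is a proper face), $F'$ has dimension $\dim E'-1$, which is one less than $\dim\RP_{\Phi'}=\dim E'$. To see $F'$ is genuinely a facet of $\RP_{\Phi'}$ and not just a high-dimensional subset, note that the linear functional $\lambda$ defining the hyperplane of $F'$ (with $(\lambda,u)=1$ on $F'$, $(\lambda,u)\le 1$ on $\RP_\Phi$) restricts to a functional on $E'$ that is $\le 1$ on $\RP_{\Phi'}\subseteq\RP_\Phi$ and equals $1$ exactly on $F'\supseteq$ the vertices of a full-dimensional face of $\RP_{\Phi'}$; so $F'$ is the face of $\RP_{\Phi'}$ cut out by $\lambda$, of codimension one.

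For the equality $V_\Phi(F)\cap E' = V_{\Phi'}(F')$: the inclusion $\supseteq$ is immediate, since any root of $\Phi'=\Phi\cap E'$ lying in $F'\subseteq F$ is a root of $\Phi$ lying in $F$ and in $E'$. For $\subseteq$, take $\beta\in\Phi$ with $\beta\in F$ and $\beta\in E'$; then $\beta\in\Phi\cap E'=\Phi'$, and $\beta\in F\cap E'$. It remains to see $F\cap E' \subseteq F'$ at the level of these polytope faces — equivalently that $F'=F\cap E'$. This is where I expect the main work: one must show the hyperplane $E'$ meets $F$ exactly in the face $F'$ and not in a larger set. This follows because $F'$ is a face of the polytope $F$ (being a subface of $F$ in $\RP_\Phi$), and a face of a polytope equals the intersection of the polytope with any supporting hyperplane defining it; since $F\subseteq E''$ and $E'\subseteq E''$, intersecting $F$ with $E'$ inside $E''$ recovers precisely $F'$. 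Combining, $V_\Phi(F)\cap E' = (F\cap\Phi)\cap E' = (F\cap E')\cap\Phi = F'\cap\Phi = V_{\Phi'}(F')$, and finally one transports back by $\tau^{-1}$. The only delicate point is the bookkeeping that passing from $\Phi$ to $\Phi''$ (via Lemma~\ref{lemma_subfaceSubsystem}) and then from $\Phi''$ to $\Phi'$ does not change which roots lie in $E'$ — but this is automatic since $\Phi'=\Phi\cap E'=(\Phi\cap E'')\cap E'=\Phi''\cap E'$.
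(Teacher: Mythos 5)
Your proof is correct in substance but reaches the key claim by a different route than the paper. The paper's one-line argument applies Lemma~\ref{lemma_subfaceSubsystem} to $F'$ itself: since $F'$ is a face of $\RP_\Phi$, up to the Weyl action it is some $F(A')$ with $A'\in\IRR$, and part (ii) of that lemma exhibits $F'$ as the coordinate face $F_{\Phi'}(\{\gamma_{A'}\})$ of $\RP_{\Phi'}$, which is a facet by the dimension count $\dim F'=\dim E'-1=\dim\RP_{\Phi'}-1$; the set equality is then exactly the chain $V_\Phi(F)\cap E'=F\cap E'\cap\Phi=F'\cap\Phi'$ that you also use. You instead apply Lemma~\ref{lemma_subfaceSubsystem} to $F$ (the data $E''$, $\Phi''$, $\gamma_A$ you extract from it are never actually used afterwards) and prove the facet claim by pure convex geometry: restrict a functional $\lambda$ cutting out $F'$ in $\RP_\Phi$ to $E'$ and observe that it cuts out $F'$ in $\RP_{\Phi'}$. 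This works and is arguably more elementary, since it avoids the root-system input entirely; it does silently use that $F'\subseteq\RP_{\Phi'}$ (true, because the vertices of $F'$ are roots lying in $E'$, hence in $\Phi'$) and that a single $\lambda$ with $\RP_\Phi\cap\{u:(\lambda,u)=1\}=F'$ exists (true: average the defining functionals $\lambda_i$, $i\in A$). One imprecision is worth repairing: $E'$ is not a supporting hyperplane of $F$ --- it is a linear subspace of codimension possibly greater than one in $E''$ --- so ``a face equals the intersection of the polytope with a supporting hyperplane defining it'' does not apply verbatim to justify $F\cap E'=F'$. The correct argument is that $F$ lies in an affine hyperplane $\{u:(\mu,u)=1\}$ missing the origin, so $F\cap E'\subseteq\{u:(\mu,u)=1\}\cap E'=\mathrm{aff}(F')$ by a dimension count, and $\RP_\Phi\cap\mathrm{aff}(F')=F'$ by the standard fact about faces. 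With that adjustment your argument is complete.
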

\begin{proof}
By the previous Lemma \ref{lemma_subfaceSubsystem}, $F'$ is a facet of $\RP_{\Phi'}$ and then the claim is clear since $V_\Phi(F)\cap E' = F\cap E'\cap\Phi = F'\cap\Phi'=V_{\Phi'}(F')$.
\end{proof}

Now we proceed with a lemma describing the lattice generated by the roots in a face.

\begin{lemma}\label{lemma_faceLattice}
Let $A\in\IRR$, then $Z(V(A))=\langle\Delta\setminus A,\beta_A\rangle_\Z$.
\end{lemma}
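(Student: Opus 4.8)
The statement to prove is that for $A\in\IRR$, the lattice $Z(V(A))$ generated by the roots in the standard parabolic face $F(A)$ equals $\langle\Delta\setminus A,\beta_A\rangle_\Z$. The plan is to establish the two inclusions separately, using the combinatorial description of $V(A)$ recalled in Remark \ref{remark_betaA}: namely that $V(A)=\{\beta\in\Phi\,|\,\beta\geq\beta_A\}$ in the dominant order, with $\beta_A$ a long root satisfying $(\coomega_\alpha,\beta_A)=m_\alpha$ for $\alpha\in\overline{A}$ and $(\coomega_\alpha,\beta_A)<m_\alpha$ otherwise.

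For the inclusion $Z(V(A))\subseteq\langle\Delta\setminus A,\beta_A\rangle_\Z$, the idea is to take any $\beta\in V(A)$ and show $\beta-\beta_A\in\langle\Delta\setminus A\rangle_\Z$, so that $\beta=\beta_A+(\beta-\beta_A)$ lies in the claimed lattice. Since $\beta\geq\beta_A$ we may write $\beta-\beta_A=\sum_{\delta\in\Delta}n_\delta\delta$ with $n_\delta\in\N$; the point is that $n_\alpha=0$ for every $\alpha\in A$. This follows by pairing with $\coomega_\alpha$: if $\alpha\in\overline{A}\supseteq A$ then $(\coomega_\alpha,\beta)=(\coomega_\alpha,\beta_A)=m_\alpha$ by Remark \ref{remark_betaA} (every root $\beta\in V(A)$ sits on the face, hence has coweight pairing $m_\alpha$ for $\alpha\in A$, and in fact for all $\alpha\in\overline A$), so $n_\alpha=(\coomega_\alpha,\beta-\beta_A)=0$. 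Hence $\beta-\beta_A$ is supported on $\Delta\setminus A$, giving the inclusion.

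For the reverse inclusion $\langle\Delta\setminus A,\beta_A\rangle_\Z\subseteq Z(V(A))$, first $\beta_A\in V(A)\subseteq Z(V(A))$ is immediate. The work is to show each simple root $\delta\in\Delta\setminus A$ lies in $Z(V(A))$. Here I would use the connectedness of $(\Delta\setminus A)\cup\{\alpha_0\}$ (the defining property of $A\in\IRR$) and Lemma \ref{lemma_subfaceSubsystem}: writing $\Phi'=\Phi\cap E'$ with $E'=\langle F(A)\rangle$, the set $(\Delta\setminus A)\cup\{\gamma_A\}$ is a basis of the irreducible subsystem $\Phi'$, and $F(A)=F_{\Phi'}(\{\gamma_A\})$ with $\beta_A$ playing the role of "$\beta$" for this coordinate facet of $\RP_{\Phi'}$. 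Then $V(A)=V_{\Phi'}(\{\gamma_A\})$ consists of all roots of $\Phi'$ above $\beta_A$; in particular, since $\gamma_A$ is a simple root of $\Phi'$ whose coordinate facet this is, the roots $\beta_A$ and $\beta_A$ shifted by appropriate simple roots of $\Phi'\setminus\{\gamma_A\}=\Delta\setminus A$ all lie in $V(A)$. More concretely: $\beta_A=\sum_{\delta\in\Delta\setminus A}a_\delta\delta+a\gamma_A$ with $a=m'\geq1$, and climbing up the dominant order from $\beta_A$ to $\theta'$ one meets, at each step, a root of $V(A)$ differing from the previous by one simple root in $\Delta\setminus A$; taking differences of consecutive roots in such a chain produces the simple roots $\delta\in\Delta\setminus A$ as $\Z$-combinations of elements of $V(A)$. (Equivalently one can argue that $Z(V(A))$ contains $Z(V_{\Phi'}(\{\gamma_A\}))$, which by the rank count of Proposition \ref{proposition_faces} is a lattice of rank $|\Delta\setminus A|+1$ inside $\langle\Delta\setminus A,\gamma_A\rangle_\Z$, and then identify it.)

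The main obstacle is the second inclusion: extracting each individual simple root $\delta\in\Delta\setminus A$ as an integer combination of face-roots requires genuinely exploiting the connectedness of $(\Delta\setminus A)\cup\{\alpha_0\}$ (or equivalently the irreducibility of $\Phi'$ in Lemma \ref{lemma_subfaceSubsystem}), rather than just dimension counting; without connectedness the claim can fail. I expect the cleanest route is the chain argument inside $\Phi'$ described above, which reduces everything to the well-known fact that in an irreducible root system the differences of consecutive roots along a saturated chain in the dominant order (from $\beta_A$ up to the highest root $\theta'$, and similarly down to lower roots) are simple roots, so that the simple roots of $\Phi'$ other than $\gamma_A$ — i.e. exactly $\Delta\setminus A$ — all appear.
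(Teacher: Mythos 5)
Your proof of the inclusion $Z(V(A))\subseteq\langle\Delta\setminus A,\beta_A\rangle_\Z$ is the same as the paper's. For the reverse inclusion the paper, like you, extracts each $\delta\in\Delta\setminus A$ as a difference of two roots of $V(A)$, but it does so by taking a maximal chain $e=\tau_0<\tau_1<\cdots<\tau_r$ in the left weak order on $W^\theta$ with $\tau_r\theta=\beta_A$, checking that the simple reflections used are exactly those indexed by $\Delta\setminus A$, and then invoking the unbroken root string property to produce, for each step $\tau_i=s_{\alpha_i}\tau_{i-1}$, a root of $V(A)$ differing from $\tau_i\theta$ by exactly $\alpha_i$ (needed because consecutive terms $\tau_{i-1}\theta$ and $\tau_i\theta$ may differ by a multiple of $\alpha_i$). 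Your saturated chain in the root poset from $\beta_A$ up to the highest root accomplishes the same thing and avoids the root-string patch, since consecutive terms of such a chain already differ by a single simple root; moreover the detour through $\Phi'$ and $\gamma_A$ is unnecessary, as the identical chain argument runs inside $\Phi$ from $\beta_A$ up to $\theta$ (the interval $[\beta_A,\theta]$ in the root poset of $\Phi$ is exactly $V(A)$ by Remark \ref{remark_betaA}, so all intermediate roots lie in $V(A)$).

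The one step you assert without justification is the crucial one: that \emph{every} $\delta\in\Delta\setminus A$ occurs as a consecutive difference along the chain. The fact that consecutive differences along a saturated chain are simple roots does not by itself give this; you need each coefficient of $\theta'-\beta_A$ (equivalently, of $\theta-\beta_A$) on $\Delta\setminus A$ to be strictly positive. This is precisely where the hypothesis $A\in\IRR$ enters: it forces $\overline{A}=A$, so Remark \ref{remark_betaA} gives $(\coomega_\delta,\beta_A)<m_\delta$ for all $\delta\in\Delta\setminus A$, hence $\theta-\beta_A$ has positive $\delta$-coefficient for every such $\delta$ and the chain must use each of them at least once. (If you insist on working inside $\Phi'$ you need the analogous strict inequality against $\theta'$, which requires first identifying $\beta_A$ with the minimal root of the facet $F_{\Phi'}(\{\gamma_A\})$ of the subsystem; working directly in $\Phi$ avoids this extra identification.) With that point made explicit your argument is complete and is a legitimate, slightly more elementary variant of the paper's proof.
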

\begin{proof} If $\beta\in V(A)$ then $(\coomega_\epsilon,\beta)=m_\epsilon$ for all $\epsilon\in A$, hence $\beta=\beta_A+\sum_{\alpha\in\Delta\setminus A}c_\alpha\alpha$ with $c_\alpha\in\N$ for all $\alpha\in\Delta\setminus A$; so $\beta\in\langle\Delta\setminus A,\beta_A\rangle_\Z$.

In order to prove the reverse inclusion consider a maximal chain $e=\tau_0<\tau_1<\cdots<\tau_r$ in $W^\theta$ with respect to the left weak Bruhat order such that $\tau_r\theta=\beta_A$ and let $\alpha_1,\alpha_2,\ldots,\alpha_r$ be simple roots such that $\tau_i=s_{\alpha_i}\tau_{i-1}$ for $i=1,2,\ldots,r$. Since $\tau_i\theta\geq\beta_A$ we have $\tau_i\beta\in V(A)$ and so $\{\alpha_1,\alpha_2,\ldots,\alpha_r\}\subseteq\Delta\setminus A$; further this is a set equality since $(\coomega_\alpha,\beta_A)<m_\alpha$ for all $\alpha\in\Delta\setminus A$.

We have proved that the two roots $\tau_{i-1}\theta$ and $\tau_i\theta=\tau_{i-1}\theta+a\alpha_i$, for some positive $a\in\N$, in the $\alpha_i$--string through $\tau_{i-1}\theta$ are in $V(A)$; but a root string is unbroken, hence also $\tau_i\theta+\alpha_i\in V(A)$. We conclude that $\Delta\setminus A\subset Z(V(A))$. This finishes our proof since it is clear that $\beta_A\in Z(V(A))$.
\end{proof}

Finally in the next lemma we see how the minimal elements of the monoid in a face for non--reduced root system may be described in terms of the irreducible components.
\begin{lemma}\label{lemma_reducible} Suppose $E=E_1\oplus E_2$, where $\oplus$ is the orthogonal direct sum, and $\Phi_i\subset E_i$, for $i=1,2$, are root systems. Suppose $F$ is a face of $\RP_{\Phi_1\cup\Phi_2}$ and let $F_i\doteq F\cap E_i$ for $i=1,\,2$.
\begin{itemize}
\item[(i)] $F_1$ and $F_2$ are faces of $\RP_{\Phi_1}$, respectively, of $\RP_{\Phi_2}$.
\item[(ii)] $M(F)=M(F_1)\oplus M(F_2)$ and $N(F)=N(F_1)\oplus N(F_2)$
\item[(iii)] The set of non--zero $\leq_F$--minimal elements of $M(F)$ is the union of those of $M(F_1)$ and of $M(F_2)$
\end{itemize}
\end{lemma}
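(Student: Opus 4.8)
The plan is to prove the three items of Lemma \ref{lemma_reducible} in order, exploiting the orthogonal decomposition $E = E_1 \oplus E_2$ throughout. Write $\pi_i \colon E \to E_i$ for the orthogonal projections; since the decomposition is orthogonal, for $u = u_1 + u_2$ with $u_i \in E_i$ and any $\lambda = \lambda_1 + \lambda_2$ one has $(\lambda, u) = (\lambda_1, u_1) + (\lambda_2, u_2)$. The root polytope $\RP_{\Phi_1 \cup \Phi_2}$ contains $\RP_{\Phi_1}$ and $\RP_{\Phi_2}$ as the slices $E_1 \cap \RP_{\Phi_1 \cup \Phi_2}$ and $E_2 \cap \RP_{\Phi_1 \cup \Phi_2}$, because a root $\beta \in \Phi_i$ lies entirely in $E_i$.

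For (i): a face $F$ of $\RP_{\Phi_1 \cup \Phi_2}$ is cut out by a set of functionals $\lambda^{(1)}, \dots, \lambda^{(r)}$ with $(\lambda^{(j)}, u) = 1$ on $F$. First I would note that $\RP_{\Phi_1 \cup \Phi_2}$ is itself a "product" polytope in the sense that its facet normals split: each facet normal of $\RP_\Phi$ is a facet normal of one of the $\RP_{\Phi_i}$ extended by $0$ on the other summand, or more precisely a scaled copy thereof (this follows from the half-space description, or simply from the fact that maximizing $(\lambda, \cdot)$ over $\Phi_1 \cup \Phi_2$ decouples). Given this, restricting the defining functionals of $F$ to $E_1$ cuts $\RP_{\Phi_1}$ out along a face $F_1$, and $F = F_1 + F_2$ with $F_i \subseteq E_i$ a face of $\RP_{\Phi_i}$; intersecting with $E_i$ recovers $F_i = F \cap E_i$.

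For (ii): since $V(F) = F \cap (\Phi_1 \cup \Phi_2)$ and every root of $\Phi_1 \cup \Phi_2$ lies in $E_1$ or in $E_2$, we have $V(F) = V(F_1) \sqcup V(F_2)$ with $V(F_i) = F_i \cap \Phi_i$. Taking $\N$-spans gives $N(F) = N(F_1) \oplus N(F_2)$ immediately. For $M(F)$, one direction is clear: $M(F_1) \oplus M(F_2) \subseteq C(V(F)) \cap R$. For the reverse, take $\gamma = \gamma_1 + \gamma_2 \in M(F)$ with $\gamma_i \in E_i$; since $\gamma \in C(V(F))$ is a non-negative combination of roots each living in one $E_i$, the orthogonal projection gives $\gamma_i \in C(V(F_i))$, and since $R = R_1 \oplus R_2$ (the root lattices of the two subsystems span orthogonal spaces and $R = \langle \Phi_1 \cup \Phi_2 \rangle_\Z$) we get $\gamma_i \in R_i$, hence $\gamma_i \in M(F_i)$.

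For (iii): using the direct sum decompositions from (ii), an element $\gamma = \gamma_1 + \gamma_2 \in M(F)$ satisfies $\gamma' \leq_F \gamma$ iff $\gamma - \gamma' \in N(F) = N(F_1) \oplus N(F_2)$, which by uniqueness of the decomposition happens iff $\gamma'_1 \leq_{F_1} \gamma_1$ and $\gamma'_2 \leq_{F_2} \gamma_2$. Hence a non-zero $\gamma$ is $\leq_F$-minimal iff it is minimal in the product order, i.e. iff each $\gamma_i$ is either $0$ or $\leq_{F_i}$-minimal; but if both were non-zero and minimal, then $\gamma_1$ alone would be a strictly smaller non-zero element below $\gamma$ (since $\gamma - \gamma_1 = \gamma_2 \in N(F_2) \subseteq N(F)$), contradicting minimality of $\gamma$. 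So a non-zero minimal element has one component zero and the other a non-zero $\leq_{F_i}$-minimal element of $M(F_i)$, and conversely every such element is $\leq_F$-minimal in $M(F)$; this is exactly the claimed union. The main obstacle is the decoupling of facet normals underlying (i) — making precise that the half-space presentation of $\RP_{\Phi_1 \cup \Phi_2}$ is obtained from those of $\RP_{\Phi_1}$ and $\RP_{\Phi_2}$ — after which (ii) and (iii) are routine bookkeeping with the orthogonal direct sum.
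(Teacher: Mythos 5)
Two steps fail. In (i), the claimed decoupling of facet normals is false: $\RP_{\Phi_1\cup\Phi_2}$ is the convex hull of the \emph{union} $\Phi_1\cup\Phi_2$ (a free sum of the two root polytopes), not their product or Minkowski sum, so maximizing $(\lambda,\cdot)$ over $\Phi_1\cup\Phi_2$ yields the maximum of the two partial maxima rather than their sum, and nothing decouples. Already for $\typeA_1\times\typeA_1$ the root polytope is the square $\mathrm{conv}\{\pm\alpha,\pm\beta\}$: each of its four edges has a normal with non-zero components in both summands, and for the edge $F=\mathrm{conv}\{\alpha,\beta\}$ one has $F_1+F_2=\{\alpha+\beta\}\neq F$. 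So both ``each facet normal lives in one $E_i$'' and ``$F=F_1+F_2$'' are wrong. The conclusion of (i) nevertheless holds, and the paper reaches it without any such structure: if $\lambda_1,\dots,\lambda_r$ is a half--space presentation of $\RP_\Phi$ and the $\lambda_h$, $h\in A$, define $F$, then the components $\lambda_{h,1}$ give a half--space presentation of $\RP_{\Phi_1}=\RP_\Phi\cap E_1$ because $(\lambda_h,u)=(\lambda_{h,1},u)$ for $u\in E_1$, and the $\lambda_{h,1}$, $h\in A$, cut out exactly $F\cap E_1$. Your (ii) is fine and agrees with the paper.

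In (iii) the parenthetical ``$\gamma-\gamma_1=\gamma_2\in N(F_2)$'' is not available: you only know $\gamma_2\in M(F_2)$, and if $\gamma_2$ is a non-zero $\leq_{F_2}$-minimal element then by definition $0\not\leq_{F_2}\gamma_2$, i.e.\ $\gamma_2\notin N(F_2)$, so the membership you invoke contradicts the very hypothesis you are trying to refute. Worse, your own product-order computation points the other way: if $\mu_i$ is $\leq_{F_i}$-minimal in $M(F_i)$ for $i=1,2$ (possibly zero), then any $\gamma'\leq_F\mu_1+\mu_2$ satisfies $\gamma'_i\leq_{F_i}\mu_i$, hence $\gamma'=\mu_1+\mu_2$; so what (ii) actually yields is that the $\leq_F$-minimal elements of $M(F)$ are precisely the sums $\mu_1+\mu_2$ of minimal elements of the two factors, and a cross term with both $\mu_i\neq0$ cannot be discarded by exhibiting $\mu_1$ as something strictly below it. The paper disposes of (iii) with ``this follows at once by (ii)'' and offers no mechanism for excluding such cross terms either; the lemma is only ever applied as a reduction to irreducible factors, but as a self-contained deduction your exclusion step is the gap.
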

\begin{proof} Denote by $\Phi$ the root system $\Phi_1\cup\Phi_2$.
\begin{itemize}
\item[(i)] Let $\lambda_1,\lambda_2,\ldots,\lambda_r\in E$ give a half--space presentation of $\RP_\Phi$ and suppose that $A\subseteq\{1,2,\ldots,r\}$ is such that the elements $\lambda_h$, $h\in A$ define the face $F$. Since $\RP_{\Phi_1}=\RP_\Phi\cap E_1$, we see that, writing $\lambda_h=(\lambda_{h,1},\lambda_{h,2})$, for $h=1,2,\ldots,r$, the vectors $\lambda_{1,1},\lambda_{2,1},\ldots,\lambda_{r,1}$ give a half--space presentation of $\RP_{\Phi_1}$. In particular, the vectors $\lambda_{h,1}$, $h\in A$ define $F_1=F\cap E_1$, hence $F_1$ is a face of $\RP_{\Phi_1}$. The proof for $F_2$ is analogous.
\item[(ii)] It is clear that $V(F)=V(F_1)\cup V(F_2)$ and so $N(F)=N(F_1)\oplus N(F_2)$ follows. Moreover $M(F)=C(V(F))\cap R_\Phi=C(V(F_1))\oplus C(V(F_2))\cap R_{\Phi_1}\oplus R_{\Phi_2}=(C(V(F_1))\cap R_{\Phi_1})\oplus(C(V(F_2))\cap R_{\Phi_2})=M(F_1)\oplus M(F_2)$.
\item[(iii)] This follows at once by (ii).
\end{itemize}
\end{proof}

\section{The face inclusion relation}\label{section_faceInclusion}

We want to study the inclusion condition for the faces of $\RP_\Phi$. We begin by the following proposition; it is a slightly improved version of Lemma 4.2 in \cite{celliniMarietti}.

\begin{proposition}\label{proposition_barycenter} If $A\in\IRR$ then $W_{A^*}$ is the stabilizer of the barycenter
$$
b(V(A))\doteq\frac{1}{|V(A)|}\sum_{\beta\in V(A)}\beta
$$
of $V(A)$. In particular the $W$--orbit of $F(A)$ is in bijection with the $W$--orbit of $b(V(A))$.
\end{proposition}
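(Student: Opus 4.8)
The plan is to show the two inclusions $\Stab_W(b(V(A)))\supseteq W_{A^*}$ and $\Stab_W(b(V(A)))\subseteq W_{A^*}$ separately, and then deduce the statement about orbits.

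For the inclusion $W_{A^*}\subseteq\Stab_W(b(V(A)))$: recall that $A^*=\Delta\setminus\partial A$, so it suffices to show that each simple reflection $s_\alpha$ with $\alpha\in A^*$ fixes the barycenter, equivalently that $s_\alpha$ permutes $V(A)$. By Remark \ref{remark_betaA}, $V(A)=\{\beta\in\Phi\,|\,\beta\geq\beta_A\}$. Now if $\alpha\in A^*$ then $\alpha$ is not in $\partial A$, which by definition means $\alpha$ is not adjacent in the affine Dynkin diagram to the connected component of $(\Delta\setminus A)\cup\{\alpha_0\}$ containing $\alpha_0$; together with Remark \ref{remark_betaA}, which gives $(\coomega_\delta,\beta_A)=m_\delta$ for $\delta\in\overline A$ and $(\coomega_\delta,\beta_A)<m_\delta$ for $\delta\notin\overline A$, one checks that $(\coalpha,\beta_A)=0$ for $\alpha\in A^*$, so $s_\alpha\beta_A=\beta_A$. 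Hence $s_\alpha$ maps $\{\beta\geq\beta_A\}$ to itself provided the dominance order is respected; the cleanest way is to observe that $V(A)$ is already $W_{A^*}$--stable because it is cut out inside $\Phi$ by the equations $(\coomega_\delta,\cdot)=m_\delta$ for $\delta\in\overline A$ and the inequalities $(\coomega_\delta,\cdot)<m_\delta$ for $\delta\notin\overline A$ (again Remark \ref{remark_betaA}, using that $V(A)$ is the set of $\beta$ with $(\coomega_\delta,\beta)=m_\delta$ exactly for $\delta\in\overline A$), and each $\coomega_\delta$ with $\delta\in A^*$ — hence $\delta\notin\partial A$, i.e. $\delta$ has no neighbour outside $\overline A\cup(\Delta\setminus A)$ — is $W_{A^*}$--invariant modulo the span of $A^*$, which does not affect the defining conditions. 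Consequently $W_{A^*}$ permutes $V(A)$ and fixes its barycenter.

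For the reverse inclusion $\Stab_W(b(V(A)))\subseteq W_{A^*}$: this is where Lemma 4.2 of \cite{celliniMarietti} is invoked, and where the ``slightly improved'' claim must be argued. Suppose $\tau\in W$ fixes $b(V(A))$. First, $\Stab_W(F(A))=W_{A^*}$ by Proposition \ref{proposition_faces}, so it is enough to show $\tau F(A)=F(A)$, i.e. that $\tau$ permutes $V(A)$ (a $W$--element permuting $V(A)$ stabilises its convex hull, which together with $F(A)$'s defining functionals forces $\tau F(A)=F(A)$). The point is that an element fixing the barycenter of a set of roots of constant length — here $V(A)$ consists of roots on a single sphere $\{(u,u)=(\beta_A,\beta_A)\}$ only if all roots of $V(A)$ are long; in general $V(A)$ may contain short roots too, and the argument should instead use that $b(V(A))$ lies in the relative interior of $F(A)$, so its stabilizer is contained in the stabilizer of the unique minimal face containing it, namely $F(A)$ itself. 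Thus $\tau\in\Stab_W(F(A))=W_{A^*}$.

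The main obstacle I expect is precisely pinning down why $\Stab_W(b(V(A)))\subseteq\Stab_W(F(A))$, i.e. that the barycenter is not accidentally fixed by extra Weyl elements: one must rule out that $b(V(A))$ lies on a wall of the Weyl chamber decomposition finer than the face structure of $\RP_\Phi$, or equivalently confirm $b(V(A))$ is in the relative interior of $F(A)$. Since $V(A)$ affinely spans $F(A)$ (it contains the vertices of $F(A)$, and $F(A)=\mathrm{conv}(V(A))$), the barycenter of $V(A)$ is a strictly positive convex combination of points of $F(A)$ whose affine hull is that of $F(A)$, hence lies in the relative interior of $F(A)$; any $\tau\in W$ fixing it must then send $F(A)$ to the unique face of $\RP_\Phi$ of that dimension containing $\tau b(V(A))=b(V(A))$ in its relative interior, which is $F(A)$ again. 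Once this is established the two inclusions combine to give $\Stab_W(b(V(A)))=W_{A^*}$, and the bijection between the $W$--orbit of $F(A)$ and the $W$--orbit of $b(V(A))$ follows from the orbit–stabilizer theorem, since both orbits have cardinality $|W|/|W_{A^*}|$ and the map $\tau F(A)\mapsto\tau b(V(A))$ is well defined and $W$--equivariant.
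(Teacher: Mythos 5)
Your proof is essentially correct, but the hard inclusion $\Stab_W(b(V(A)))\subseteq W_{A^*}$ is argued by a genuinely different route from the paper's. The paper invokes Lemma 4.2 of \cite{celliniMarietti} to place $b\doteq b(V(A))$ in the dominant chamber, so that $\Stab_W(b)$ is generated by the simple reflections it contains, and then reduces the claim to the explicit root-theoretic computation $(b,\alpha)>0$ for all $\alpha\in\partial A$, which it verifies by writing each $\beta\in V(A)$ as $\theta-\sum_{\gamma\in\Delta\setminus A}c_\gamma\gamma$ and estimating $(\beta,\alpha)$. You instead use pure convex geometry: $b$ is a strictly positive convex combination of $V(A)$, which contains all vertices of $F(A)$, hence $b$ lies in the relative interior of $F(A)$; since the relative interiors of the faces partition the polytope and $W$ acts by automorphisms of $\RP_\Phi$, any $\tau$ fixing $b$ must satisfy $\tau F(A)=F(A)$, and then Proposition \ref{proposition_faces} gives $\tau\in W_{A^*}$. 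This is clean and avoids both the dominance lemma and the computation, at the price of leaning on Proposition \ref{proposition_faces} for both inclusions (no circularity, since that is an independent citation). One caveat: in your first paragraph the intermediate claim that $(\coalpha,\beta_A)=0$ for all $\alpha\in A^*$ is false in general (already in type $\typeA_2$ with $A=\{\alpha_1\}$ one has $\beta_A=\alpha_1$ and $(\alpha_2,\beta_A)\neq0$; $s_{\alpha_2}$ permutes $V(A)$ without fixing $\beta_A$); but this does not matter, since the easy inclusion follows immediately from the fact that $W_{A^*}=\Stab_W(F(A))$ permutes $V(A)=F(A)\cap\Phi$ and hence fixes the barycenter, exactly as the paper observes in one line.
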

\begin{proof} Since $W_{A^*}$ is the stabilizer of $F(A)$, it clearly stabilizes $b(V(A))$. Viceversa $b(V(A))$ is in the dominant chamber by Lemma 4.2 in \cite{celliniMarietti}, so its stabilizer is generated by the simple reflections it contains. Setting $b\doteq b(V(A))$ for short, it suffices to show that $(b,\alpha)>0$ for all $\alpha\in\partial A$ to prove our claim.

Now we show first that $(\beta_A,\alpha)>0$ for all $\alpha\in\partial A$. Indeed let us write $\beta_A=\theta-\sum_{\gamma\in\Delta\setminus A}c_\gamma\gamma$ with $c_\gamma>0$, for all $\gamma\in\Delta\setminus A$, and let $\alpha\in\partial A$. We have $(\beta_A,\alpha)=(\theta,\alpha)-\sum_{\gamma\in\Delta\setminus A}c_\gamma(\gamma,\alpha)$ where $(\theta,\alpha)\geq0$ since the highest root $\theta$ is in the dominant chamber and $(\gamma,\alpha)\leq0$ for all $\gamma\in\Delta\setminus A$, since $\alpha\in A$. This shows that $(\beta_A,\alpha)$ is positive as soon as $(\theta,\alpha)>0$ or $(\gamma,\alpha)\neq0$ for some $\gamma\not\in A$. Hence $(\beta_A,\alpha)>0$, by definition of $\partial A$.

Now consider a general $\beta\in V(A)$. We can write as above $\beta=\theta-\sum_{\gamma\in\Delta\setminus A}c_\gamma\gamma$ but this time $c_\gamma\geq0$. So the same argument as above shows that $(\beta,\alpha)\geq0$ for all $\alpha\in\partial A$. This proves our claim since for $\alpha\in\partial A$ we have $(b,\alpha)=1/|V(A)|\sum_{\beta\in V(A)}(\beta,\alpha)\geq1/|V(A)|(\beta_A,\alpha)>0$.
\end{proof}

\begin{proposition}\label{proposition_faceBorder} If $A,B\in\IRR$ then the face $F(B;\sigma)$ is contained in the face $F(A;\tau)$ if and only if $B\supseteq A$ and $\tau^{-1}\sigma\in W_{A^*}\cdot W_{B^*}$.
\end{proposition}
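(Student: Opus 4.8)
The plan is to reduce everything to Proposition \ref{proposition_faces} by choosing canonical representatives. First I would observe that both containment $F(B;\sigma)\subseteq F(A;\tau)$ and the stated algebraic condition are unchanged if we replace $\sigma$ by $\sigma w$ with $w\in W_{B^*}$ (since $W_{B^*}$ stabilizes $F(B)$, by Proposition \ref{proposition_faces}) and $\tau$ by $\tau w'$ with $w'\in W_{A^*}$. So I may assume $\tau=e$, i.e. reduce to showing that $F(B;\sigma)\subseteq F(A)$ if and only if $B\supseteq A$ and $\sigma\in W_{A^*}\cdot W_{B^*}$.

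For the "if" direction: if $B\supseteq A$ then $F(B)\subseteq F(A)$ by the last sentence before Remark \ref{remark_betaA} (monotonicity of $A\mapsto F(A)$ on $\IRR$), and if $\sigma=uw$ with $u\in W_{A^*}$, $w\in W_{B^*}$, then $\sigma F(B)=uw F(B)=u F(B)\subseteq u F(A)=F(A)$, using that $W_{B^*}$ fixes $F(B)$ and $W_{A^*}$ fixes $F(A)$. For the "only if" direction: suppose $\sigma F(B)\subseteq F(A)$. I would use the barycenter criterion of Proposition \ref{proposition_barycenter}. The barycenter $b(V(A))$ is the unique point of $F(A)$ fixed by $W_{A^*}$ and lying in the dominant chamber; more to the point, $\Stab_W b(V(A))=W_{A^*}$ and the $W$-orbit of $F(A)$ corresponds bijectively to that of $b(V(A))$. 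Now $\sigma b(V(B))=b(\sigma V(B))=b(V(F(B;\sigma)))$ is the barycenter of a face contained in $F(A)$; I want to conclude first that $B\supseteq A$ and then pin down $\sigma$ modulo the two stabilizers.

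For the combinatorial step $B\supseteq A$: since $\sigma F(B)\subseteq F(A)$, every functional $\coomega_\alpha/m_\alpha$ with $\alpha\in A$ takes the value $1$ on all of $\sigma F(B)$, hence $\sigma^{-1}\coomega_\alpha/m_\alpha$ takes value $1$ on $F(B)$; comparing with the description of $V(B)$ via $\beta_B$ in Remark \ref{remark_betaA} (the roots $\beta\ge\beta_B$), and using that $F(B)$ affinely spans the hyperplane $(\coomega_\delta/m_\delta,u)=1$ for $\delta\in\overline B$ and nothing more, I can force $\sigma^{-1}\coomega_\alpha$ to lie in the cone spanned by $\{\coomega_\delta:\delta\in\overline B\}$ — and, since it is $W$-conjugate to a fundamental coweight and $F(B;\sigma)$ has codimension $|B|=|A|$ only if $B$ and $A$ have the same size, I get that $\sigma$ maps the affine span of $F(B)$ into that of $F(A)$ forcing $\sigma\coomega_\delta$-type relations that yield $A\subseteq B$. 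Concretely: $\dim F(B;\sigma)\le\dim F(A)$ gives $|B|\ge|A|$, and the containment of the relative interiors then forces each defining functional of $F(A)$ to be a defining functional of $\sigma F(B)$, i.e. $\sigma^{-1}(\coomega_\alpha/m_\alpha)\in\{\coomega_\delta/m_\delta:\delta\in\overline B\}+(\text{span of }\Delta\setminus B)^\perp$-type constraints, which combinatorially is exactly $A\subseteq B$.

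Once $B\supseteq A$ is established, I would finish as follows. We then have two faces $F(B)$ and $\sigma F(B)$, both contained in $F(A)$, with $\sigma F(B)=F(B;\sigma)$. Write $\sigma=u\cdot v$ with $u\in W^{A^*}$ and $v\in W_{A^*}$ (minimal-length decomposition). Then $u F(B)= u v F(B)=\sigma F(B)\subseteq F(A)$ up to the $W_{A^*}$-action, but also $u\notin W_{A^*}$ would move $F(A)$ off itself; more carefully, since $\sigma F(B)\subseteq F(A)$ and $F(B)\subseteq F(A)$, the barycenters $b(\sigma V(B))$ and $b(V(B))$ both lie in $F(A)$, hence in the affine hull of $F(A)$, which $W_{A^*}$ acts on with the known orbit structure. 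Applying Proposition \ref{proposition_faces} inside the smaller root system determined by $F(A)$ — or directly: $F(B;\sigma)$ and $F(B)$ are two faces of $\RP_\Phi$ in the same $W$-orbit (both equal a $W$-translate of the standard parabolic face $F(B)$), lying in the same face $F(A)$, so there is $w\in\Stab_W F(A)=W_{A^*}$ with $w F(B)=\sigma F(B)$, whence $w^{-1}\sigma$ stabilizes $F(B)$, i.e. $w^{-1}\sigma\in W_{B^*}$, giving $\sigma\in W_{A^*}W_{B^*}$. Restoring the general $\tau$ gives $\tau^{-1}\sigma\in W_{A^*}W_{B^*}$.

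The main obstacle I expect is the combinatorial heart of the "only if" direction, namely extracting $A\subseteq B$ from the mere geometric containment $\sigma F(B)\subseteq F(A)$: one must argue that a $W$-translate of one standard parabolic face sitting inside another forces the inclusion of the defining sets, and this is where the fine structure from \cite{celliniMarietti} — the description of $V(B)$ via $\beta_B$ in Remark \ref{remark_betaA}, the dominant-chamber barycenter of Proposition \ref{proposition_barycenter}, and the codimension/stabilizer data of Proposition \ref{proposition_faces} — has to be combined carefully. Everything after that ($B\supseteq A$ in hand, and the identification of $\sigma$ modulo $W_{A^*}W_{B^*}$) is a clean application of "two faces in one $W$-orbit inside a common face differ by the stabilizer of that face."
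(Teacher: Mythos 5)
Your ``if'' direction matches the paper's and is fine, but the ``only if'' direction has two genuine gaps. First, the step you yourself flag as the combinatorial heart --- extracting $B\supseteq A$ from the bare containment $\sigma F(B)\subseteq F(A)$ --- is never actually carried out: the passage from ``each defining functional of $F(A)$ supports $\sigma F(B)$'' to constraints on $\sigma^{-1}\coomega_\alpha/m_\alpha$ relative to $\{\coomega_\delta/m_\delta:\delta\in\overline B\}$ to ``$A\subseteq B$'' is a chain of assertions, not arguments, and the intermediate remark about codimensions (``$|B|=|A|$'') is not what containment gives you. Second, and more seriously, your concluding step is circular: the claim that two $W$-conjugate faces $F(B)$ and $\sigma F(B)$, both lying inside $F(A)$, must be related by an element of $\Stab_W F(A)=W_{A^*}$ is not a general fact about polytopes with group actions, and for $\RP_\Phi$ it is essentially equivalent to the proposition being proved (indeed it follows from it by writing each such face as $F(B;\sigma_i)$ with $\sigma_i\in W_{A^*}\cdot W_{B^*}$). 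The suggested fallback of ``applying Proposition \ref{proposition_faces} inside the smaller root system determined by $F(A)$'' does not obviously repair this, because the Weyl group of the subsystem $\Phi\cap\langle F(A)\rangle_\R$ is not $W_{A^*}$.

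The paper avoids both problems by reversing the order of deduction. It takes a reduced word $s_{\alpha_1}\cdots s_{\alpha_r}$ for the minimal representative $\eta$ of $\tau^{-1}\sigma$ in $W^{B^*}$, uses the strictly descending chain of barycenters $b(V(B))>s_{\alpha_r}b(V(B))>\cdots$ (Proposition \ref{proposition_barycenter} together with the weak-order characterization) to produce a root $\beta'\in V(A)$ with $(\coomega_{\alpha_1},\beta')<m_{\alpha_1}$, concludes $\alpha_1\notin\partial A$ so that $s_{\alpha_1}$ fixes $F(A)$, and peels off the letters one at a time to obtain $\eta\in W_{A^*}$ directly; only then do $F(B)\subseteq F(A)$ and hence $B\supseteq A$ drop out as consequences. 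To salvage your outline you would need an independent proof that $W_{A^*}$ acts transitively on the $W$-conjugates of $F(B)$ contained in $F(A)$, and that is precisely where an inductive, letter-by-letter argument of the paper's kind seems unavoidable.
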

\begin{proof} If $\tau^{-1}\sigma=\eta_A\eta_B$ with $\eta_A\in W_{A^*}$, $\eta_B\in W_{B^*}$ and $B\supseteq A$ then $F(B;\sigma)=\tau\eta_A\eta_B\cdot F(B)=\tau\eta_A\cdot F(B)\subseteq\tau\eta_A\cdot F(A)=F(A;\tau)$.

In order to prove the converse let $\eta\doteq s_{\alpha_1}s_{\alpha_2}\cdots s_{\alpha_r}$, for some simple roots $\alpha_1,\alpha_2,\cdots,\alpha_r$, be a reduced expression of the minimal representative of $\tau^{-1}\sigma$ in $W^{B^*}$ and let $b\doteq b(V(B))$. So, by (iv) of Theorem~4.3.1 in \cite{brentiBjorner}, we have $b>s_{\alpha_r}b>s_{\alpha_{r-1}}s_{\alpha_r}b>\cdots>s_{\alpha_1}s_{\alpha_2}\cdots s_{\alpha_r}b$, with respect to the dominant order, being the above expression of $\eta$ reduced. Hence, in particular, using Proposition \ref{proposition_barycenter}, there exists $\beta\in s_{\alpha_2}\cdots s_{\alpha_r}\cdot V(B)$ such that $\beta'\doteq s_{\alpha_1}(\beta)<\beta$ and so $(\beta,\coalpha_1)>0$.

But $\beta$ is a root, so $(\coomega_{\alpha_1},\beta)\leq m_{\alpha_1}$ and we find $(\coomega_{\alpha_1},\beta')=(\coomega_{\alpha_1},\beta)-(\beta,\coalpha_1)<m_{\alpha_1}$. We conclude $\alpha_1\not\in A$ since $\beta'\in\eta V(B)\subseteq V(A)$. Hence $\alpha_1\not\in\partial A$, so we have $s_{\alpha_2}\cdots s_{\alpha_r}\cdot F(B)\subseteq s_{\alpha_1}\cdot F(A)=F(A)$. We inductively find that $\eta\in W_{A^*}$, hence $\tau^{-1}\sigma\in W_{A^*}\cdot W_{B^*}$. It follows that $F(B)\subseteq F(A)$ and so finally $B\supseteq A$ since $A,B\in\IRR$.
\end{proof}

As a first application of the previous proposition, in the following lemma we see that the lattice generated by the roots in a face is compatible with subfaces.

\begin{lemma}\label{lemma_subfaceLattice}
Let $F'$ be a subface of the face $F$ of $\RP_\Phi$. Then $Z(V(F'))=Z(V(F))\cap\langle F'\rangle_\R$.
\end{lemma}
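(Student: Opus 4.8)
The inclusion $Z(V(F'))\subseteq Z(V(F))\cap\langle F'\rangle_\R$ is immediate: since $F'\subseteq F$ we have $V(F')\subseteq V(F)$, whence $Z(V(F'))\subseteq Z(V(F))$, and of course $V(F')\subseteq F'$ forces $Z(V(F'))\subseteq\langle F'\rangle_\R$. So the whole content is the reverse inclusion, and the plan is to reduce to a situation where Lemma \ref{lemma_faceLattice} applies and then argue by induction on the codimension of $F'$ inside $F$.

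First I would reduce to the standard parabolic case. Since $W$ acts on the faces of $\RP_\Phi$ preserving all the objects in sight ($V(\cdot)$, the generated lattices, and spans), we may assume $F=F(A)$ with $A\in\IRR$. The subface $F'$ is then, by Proposition \ref{proposition_faces} and Proposition \ref{proposition_faceBorder}, of the form $F(B;\eta)$ with $B\in\IRR$, $B\supseteq A$, and $\eta\in W_{A^*}$ (because $\tau^{-1}\sigma\in W_{A^*}W_{B^*}$ and the $W_{B^*}$ factor stabilizes $F(B)$). Since $\eta\in W_{A^*}=\Stab(F(A))$, applying $\eta^{-1}$ fixes $F=F(A)$ and $\langle F\rangle_\R$ while sending $F'$ to $F(B)$; so we may further assume $F'=F(B)$ is itself standard parabolic. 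Now by Lemma \ref{lemma_faceLattice}, $Z(V(A))=\langle\Delta\setminus A,\beta_A\rangle_\Z$ and $Z(V(B))=\langle\Delta\setminus B,\beta_B\rangle_\Z$, while $\langle F(B)\rangle_\R=E'$ is, by Lemma \ref{lemma_subfaceSubsystem}, the span of $(\Delta\setminus B)\cup\{\gamma_B\}$ (equivalently of $(\Delta\setminus B)\cup\{\beta_B\}$, since by the proof of that lemma $\beta_B$ and $\gamma_B$ differ by an element of $\langle\Delta\setminus B\rangle_\R$).

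The heart of the matter is then the linear-algebra statement: if $x=\sum_{\alpha\in\Delta\setminus A}c_\alpha\alpha+c\,\beta_A\in Z(V(A))$ lies in $E'=\langle\Delta\setminus B,\beta_B\rangle_\R$, then $x\in Z(V(B))$. Here I would expand $\beta_A$ in the basis $(\Delta\setminus B)\cup\{\gamma_B\}$ of $\Phi'$, using that $\beta_A\in V(A)\subseteq V(B)$ (as $A\subseteq B$) so that $\beta_A$ is a nonnegative integral combination of $(\Delta\setminus B)\cup\{\beta_B\}$ with coefficient of $\beta_B$ equal to $1$ — this is the $a=m'$ type computation in Lemma \ref{lemma_subfaceSubsystem}, here applied to the pair $B\supseteq A$ inside the relevant subsystem. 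Substituting, $x$ becomes an integral combination of $(\Delta\setminus B)\cup\{\beta_B\}$ together with the extra term $\sum_{\alpha\in B\setminus A}c_\alpha\alpha$; since $x\in E'$ and the simple roots $\alpha\in B\setminus A$ together with $(\Delta\setminus B)\cup\{\beta_B\}$ are linearly independent over $\R$ (the former lie outside $E'$ — indeed $(\coomega_\alpha,E')$ is not identically zero only in a controlled way, using Remark \ref{remark_betaA}), each $c_\alpha$ with $\alpha\in B\setminus A$ must vanish. Thus $x\in\langle\Delta\setminus B,\beta_B\rangle_\Z=Z(V(B))$, as desired.

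I expect the main obstacle to be exactly the independence/vanishing step: making precise why an element of $Z(V(A))$ that happens to lie in $E'=\langle F'\rangle_\R$ cannot have any genuine $\alpha$-component for $\alpha\in B\setminus A$. The clean way to see this is to pair with the coweights $\coomega_\alpha$, $\alpha\in B\setminus A$: for $u\in E'=\langle F(B)\rangle_\R$ one has $(\coomega_\alpha,u)$ determined by the single coefficient of $\beta_B$ (since $(\coomega_\alpha,\beta_B)=m_\alpha$ for $\alpha\in\overline B\supseteq B$ and $(\coomega_\alpha,\delta)=0$ for $\delta\in\Delta\setminus B$), so that after subtracting the $\beta_B$-part the remaining combination of the $\alpha\in B\setminus A$ is killed by all these coweights and hence is zero. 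Once this linear-algebra lemma is in place, the induction (or the direct argument above) closes immediately, and the equality $Z(V(F'))=Z(V(F))\cap\langle F'\rangle_\R$ follows.
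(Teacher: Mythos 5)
Your overall architecture matches the paper's: reduce by the Weyl group action and Proposition \ref{proposition_faceBorder} to standard parabolic faces $F=F(A)$, $F'=F(B)$ with $A\subseteq B$, describe both lattices via Lemma \ref{lemma_faceLattice}, and then run a coweight computation to kill the coefficients on $B\setminus A$. However, the central step rests on a false inclusion. From $A\subseteq B$ one gets $F(B)\subseteq F(A)$ and hence $V(B)\subseteq V(A)$, not $V(A)\subseteq V(B)$ as you assert; correspondingly $\beta_B\geq\beta_A$, so it is $\beta_B$ that is a non--negative integral combination of $(\Delta\setminus A)\cup\{\beta_A\}$ with $\beta_A$--coefficient $1$, not the other way round. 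In general $\beta_A\notin\langle F'\rangle_\R$ (for instance in type $\typeA_3$ with $A=\{\alpha_1\}$ and $B=\{\alpha_1,\alpha_3\}$ one has $\beta_A=\alpha_1$, $\beta_B=\theta$, and $\alpha_1\notin\langle\alpha_2,\theta\rangle_\R$), so $\beta_A$ simply cannot be expanded in the basis $(\Delta\setminus B)\cup\{\gamma_B\}$ of $E'$, and the substitution that is supposed to produce your decomposition of $x$ does not exist. This is a genuine gap at the heart of the argument, not a cosmetic slip.

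The argument is repairable along your own lines: write $\beta_B=\beta_A+\sum_{\alpha\in\Delta\setminus A}e_\alpha\alpha$ with $e_\alpha\in\N$ (the coefficients on $A$ vanish because $(\coomega_\epsilon,\beta_A)=(\coomega_\epsilon,\beta_B)=m_\epsilon$ for $\epsilon\in A$, by Remark \ref{remark_betaA}), substitute $\beta_A=\beta_B-\sum_{\alpha\in\Delta\setminus A}e_\alpha\alpha$ into $x=c\beta_A+\sum_{\alpha\in\Delta\setminus A}c_\alpha\alpha$, and then your coweight pairing does show that the residual coefficients on $B\setminus A$ vanish, giving $x\in\langle\Delta\setminus B,\beta_B\rangle_\Z=Z(V(B))$. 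The paper sidesteps the $\beta_A$ versus $\beta_B$ issue entirely: it replaces $\beta_A$ by $\beta'_A=\sum_{\alpha\in A}m_\alpha\alpha$ (which generates the same lattice together with $\Delta\setminus A$), reduces to the one--step case $B=A\cup\{\delta\}$, and pairs with the single functional $\frac{\coomega_\alpha}{m_\alpha}-\frac{\coomega_\delta}{m_\delta}$ cutting $\langle F'\rangle_\R$ out of $\langle F\rangle_\R$; this yields $c_\delta=cm_\delta$ and hence membership in $Z(V(A\cup\{\delta\}))$ directly. You should either adopt one of these corrections or rework your expansion of $\beta_A$; as written the proof does not go through.
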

\begin{proof}
Using Proposition \ref{proposition_faceBorder} we may assume that $F$ and $F'$ are standard parabolic faces using the action of the Weyl group; so let $F\doteq F(A)$ and $F'\doteq F(A')$ for some $A\subseteq A'\subseteq\Delta$. Further it suffices to prove our claim in the case $A'\doteq A\cup\{\delta\}$ with $\delta\in\Delta\setminus A$. Let $E'\doteq\langle F(A')\rangle_\R$ and, for a generic $B\in\IRR$, let $\beta'_B\doteq\sum_{\alpha\in B}m_\alpha\alpha$; notice that by the previous Lemma \ref{lemma_faceLattice} we have $Z(V(B))=\langle\Delta\setminus B,\beta_B\rangle_\Z=\langle\Delta\setminus B,\beta'_B\rangle_\Z$. Finally let $\alpha$ be an arbitrary fixed element of $A$, denote by $\varphi$ the vector $\frac{\coomega_\alpha}{m_\alpha}-\frac{\coomega_\delta}{m_\delta}$ and by $L$ the set of $u\in E$ such that $(\varphi,u)=0$.

Let $\gamma$ be an element of $Z(V(A))\cap E'=\langle\Delta\setminus A,\beta'_A\rangle_\Z\cap L$. Then $\gamma=c\beta'_A+\sum_{\alpha\in\Delta\setminus A}c_\alpha\alpha$ for some $c\in\N$ and $c_\alpha\in\N$ for all $\alpha\in\Delta\setminus A$, and using the fact that $0=(\varphi,\gamma)$ we find $c_\delta=cm_\delta$. Hence $\gamma=c(\beta'_A+m_\delta\delta)+\sum_{\alpha\in\Delta\setminus A'}c_\alpha\alpha=c\beta'_{A'}+\sum_{\alpha\in\Delta\setminus A'}c_\alpha\alpha$. So $\gamma\in\langle\Delta\setminus A',\beta'_{A'}\rangle_\Z=Z(V(A'))$.

Now our claim is proved since the inclusion $Z(V(A'))\subseteq Z(V(A))\cap\langle F(A')\rangle_\R$ is clear.
\end{proof}

We apply again Proposition \ref{proposition_faceBorder} to the description of the border of a facet $F(\alpha)$, $\alpha\in\Delta$ a maximal root. We say that two faces of the same dimension $d$ are \emph{adjacent} if their intersection is a face of dimension $d-1$.

\begin{proposition}\label{proposition_adjacentFacet} Let $\alpha$ be a maximal root. The facets adjacent to $F(\alpha)$ are:
\begin{itemize}
\item[(i)] $F(\delta;\tau)$ with $\delta\in\Delta$, $\delta\neq\alpha$ maximal root such that $\{\alpha,\delta\}\in\IRR$ and $\tau\in W_{\Delta\setminus\{\alpha\}}$,
\item[(ii)] $F(\alpha;\tau s_\alpha)$ with $\tau\in W_{\Delta\setminus\{\alpha\}}$, if there exists $\epsilon\in\Delta$ non--maximal such that $\{\alpha,\epsilon\}\in\IRR$. In such a case $\epsilon$ is the unique simple root adjacent to $\alpha$ in the Dynkin diagram of $\affPhi$.
\end{itemize}
\end{proposition}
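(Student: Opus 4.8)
The plan is to identify the facets adjacent to $F(\alpha)$ by combining the combinatorial dictionary of Proposition \ref{proposition_faceBorder} with the codimension count of Proposition \ref{proposition_faces}. A facet $G$ adjacent to $F(\alpha)$ must meet $F(\alpha)$ in a face of codimension $2$ in $\RP_\Phi$. Up to the action of $W$ (acting on both $G$ and on $F(\alpha)$ simultaneously via $\Stab F(\alpha) = W_{\{\alpha\}^*} = W_{\Delta\setminus\partial\{\alpha\}}$), the intersection $G\cap F(\alpha)$ is a standard parabolic face $F(C)$ with $C\in\IRR$ and $|C|=2$. So the first step is: enumerate the $C\in\IRR$ with $|C|=2$ that contain $\alpha$, i.e. the two--element connected--complement sets $C=\{\alpha,\delta\}$. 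For each such $C$, by Proposition \ref{proposition_faceBorder} the facets containing $F(C)$ are exactly the $F(B;\sigma)$ with $B\in\IRR$, $|B|=1$, $B\subseteq C$ and $\sigma$ in a suitable double coset; since $F(\alpha)$ is one of them we must read off the \emph{other} such facet(s). There are two combinatorial possibilities: either $\{\delta\}\in\IRR$ as well (i.e. $\delta$ is maximal), giving case (i); or $\{\delta\}\notin\IRR$, in which case $\delta=\epsilon$ is non--maximal and, by Proposition \ref{proposition_minMaxParabolic}, the face $F(\{\delta\})$ collapses onto a standard parabolic face $F(B)$ with $|B|\le 1$, and the only room left for a second facet through $F(C)$ is a $W$--translate of $F(\alpha)$ itself — this produces case (ii).

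Concretely, for case (i) I would argue: if $\delta\ne\alpha$ is maximal with $\{\alpha,\delta\}\in\IRR$, then $F(\{\alpha,\delta\})$ has codimension $2$ and is contained in both $F(\alpha)$ and $F(\delta)$, which are therefore adjacent. Conjugating $F(\delta)$ by $\Stab F(\alpha)$ we get precisely the facets $F(\delta;\tau)$ with $\tau\in W_{\Delta\setminus\{\alpha\}}$ adjacent to $F(\alpha)$ along a $W_{\Delta\setminus\{\alpha\}}$--translate of $F(\{\alpha,\delta\})$. Here I should note $\{\alpha\}^* = \Delta\setminus\partial\{\alpha\}$, and since $\alpha$ is maximal $\partial\{\alpha\}$ is nonempty, but the statement phrases the translating group as $W_{\Delta\setminus\{\alpha\}}$; I would reconcile this by observing that translating $F(\delta)$ by an element of $W_{\Delta\setminus\{\alpha\}}$ already exhausts all the adjacent facets of the first type (any larger translating group would move $F(\alpha)$ as well, and we have fixed $F(\alpha)$), and conversely every element of $W_{\{\alpha\}^*}$ can be adjusted modulo $W_{\{\alpha,\delta\}^*}$ — the stabilizer of the common face — to lie in $W_{\Delta\setminus\{\alpha\}}$, using Proposition \ref{proposition_faceBorder}. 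For case (ii), the point is that if $\epsilon$ is the simple root with $\{\alpha,\epsilon\}\in\IRR$ and $\epsilon$ non--maximal, then $s_\epsilon\notin\Stab F(\alpha)$ but $s_\epsilon\in W_{\{\alpha,\epsilon\}^*}=\Stab F(\{\alpha,\epsilon\})$, so $s_\alpha s_\epsilon$ (or rather the relevant reflection $s_\alpha$, once one tracks which simple root moves the defining functional $\coomega_\alpha/m_\alpha$) carries $F(\alpha)$ to a \emph{distinct} facet $F(\alpha;\tau s_\alpha)$ still containing the codimension--$2$ face $F(\{\alpha,\epsilon\})$. Uniqueness of such $\epsilon$ follows because $\{\alpha,\epsilon\}\in\IRR$ forces $\epsilon$ to be the unique neighbour of $\alpha$ in the affine diagram on the $\alpha_0$ side of the complement, which by inspection of the affine Dynkin diagrams is a single node whenever it is non--maximal.

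For the converse direction — that these are \emph{all} adjacent facets — I would argue by the codimension/orbit bijection: every facet of $\RP_\Phi$ is $W$--conjugate to a unique coordinate facet $F(\gamma)$ with $\gamma$ maximal, and if $F(\gamma;\rho)$ is adjacent to $F(\alpha)$ then $F(\gamma;\rho)\cap F(\alpha)$ is a codimension--$2$ face, hence (by Proposition \ref{proposition_faces} and Weyl conjugacy, after moving by $\Stab F(\alpha)$) a $\Stab F(\alpha)$--translate of some $F(\{\alpha,\delta\})\in\IRR$; then Proposition \ref{proposition_faceBorder} pins down $F(\gamma;\rho)$ to be on the short list above. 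The main obstacle I anticipate is the bookkeeping around the stabilizer groups: carefully distinguishing $W_{\{\alpha\}^*}=W_{\Delta\setminus\partial\{\alpha\}}$ from $W_{\Delta\setminus\{\alpha\}}$, and justifying — via the double--coset description in Proposition \ref{proposition_faceBorder} applied to the pair $F(\{\alpha,\delta\})\subseteq F(\alpha)$ — that reducing the translating parameter $\tau$ modulo $\Stab F(\{\alpha,\delta\})$ lets one take $\tau\in W_{\Delta\setminus\{\alpha\}}$ without losing or duplicating any adjacent facet. The geometric content (existence of the $F(\{\alpha,\delta\})$ of codimension $2$, and the ``flip'' $s_\alpha$ in case (ii)) is straightforward once the $\IRR$--combinatorics and Proposition \ref{proposition_faceBorder} are in hand.
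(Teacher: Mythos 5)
Your proposal is correct and follows essentially the same route as the paper: both arguments use Propositions \ref{proposition_faces} and \ref{proposition_faceBorder} to write the codimension--$2$ intersection as $F(\{\alpha,\epsilon\};\tau)$ with $\{\alpha,\epsilon\}\in\IRR$ and $\tau\in W_{\Delta\setminus\{\alpha\}}$, split on whether $\epsilon$ is maximal, and prove the converse by exhibiting the codimension--$2$ face inside each claimed intersection. The one step you treat differently is the identification of the second facet in case (ii): the paper computes explicitly that the defining functionals $\frac{\coomega_\alpha}{m_\alpha}-\frac{\tau\coomega_\epsilon}{m_\epsilon}$ and $\frac{\coomega_\alpha}{m_\alpha}-\frac{\tau s_\alpha\coomega_\alpha}{m_\alpha}$ are proportional (a relation reused later in Remark \ref{remark_separatingHyperplanes}), whereas you note that $s_\alpha$ lies in $\Stab F(\{\alpha,\epsilon\})=W_{\Delta\setminus\{\epsilon\}}$ but not in $\Stab F(\alpha)$ and invoke the fact that a codimension--$2$ face of a polytope lies on exactly two facets; both work, and the paper itself uses that two--facet principle to rule out a third facet in case (i). Your correction from $s_\epsilon$ to $s_\alpha$ is the right one, since $\epsilon\notin\{\alpha,\epsilon\}^*$ while $\alpha\in\{\alpha,\epsilon\}^*=\Delta\setminus\{\epsilon\}$. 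Finally, the stabilizer bookkeeping you anticipate as the main obstacle is vacuous: for $\alpha$ maximal the set $(\Delta\setminus\{\alpha\})\cup\{\alpha_0\}$ is connected and adjacent to $\alpha$, so $\partial\{\alpha\}=\{\alpha\}$ and $W_{\{\alpha\}^*}=W_{\Delta\setminus\{\alpha\}}$ on the nose, with no reduction modulo $\Stab F(\{\alpha,\delta\})$ required; and the uniqueness of $\epsilon$ needs no case--by--case inspection, since $\epsilon$ non--maximal means $\epsilon$ disconnects the affine diagram while $\{\alpha,\epsilon\}\in\IRR$ forces $\{\alpha\}$ to be an entire connected component of the affine diagram with $\epsilon$ removed.
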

\begin{proof} Let the facet $F_1\doteq F(\delta;\sigma)$, with $\delta\in\Delta$ maximal root, be adjacent to $F\doteq F(\alpha)$ and let $F'\doteq F\cap F_1$. Since $F'$ has codimension $2$ and is contained in $F$, by Proposition \ref{proposition_faces} and Proposition \ref{proposition_faceBorder} we have $F'=F(\{\alpha,\epsilon\};\tau)$ for some $\epsilon\in\Delta$, $\epsilon\neq\alpha$ such that $\{\alpha,\epsilon\}\in\IRR$ and $\tau\in W_{\Delta\setminus\{\alpha\}}$ since this is the stabilizer of $F$.

Using the fact that $F'\subset F_1$ and Proposition \ref{proposition_faceBorder}, we obtain that $\delta\in\{\alpha,\epsilon\}$. If $\epsilon$ is a maximal root then $\delta=\epsilon$ since otherwise if $\delta=\alpha$ then $F'$ would be contained in the three different facets $F$, $F_1$ and $F(\{\epsilon\};\tau)$ and this is clearly impossible. Hence $F_1$ is of type (i) as in our claim.

So suppose that $\epsilon$ is not maximal; then $\delta=\alpha$ since $\delta$ is maximal. Consider the subspace $E'$ of codimensione $1$ in $E$ spanned by $F'$. It is clearly the orthogonal of $\frac{\coomega_\alpha}{m_\alpha}-\frac{\tau\coomega_\epsilon}{m_\epsilon}$ and $\frac{\coomega_\alpha}{m_\alpha}-\frac{\sigma\coomega_\alpha}{m_\alpha}$. Since in turn $E'$ determines $F'$ and hence $F_1$, if we show that $\frac{\coomega_\alpha}{m_\alpha}-\frac{\tau\coomega_\epsilon}{m_\epsilon}$ and $\frac{\coomega_\alpha}{m_\alpha}-\frac{\tau s_\alpha\coomega_\alpha}{m_\alpha}$ are proportional, then we may conclude that $F_1$ is of type (ii) as in the claim.

Now notice that $(\Delta\setminus\{\alpha,\epsilon\})\cup\{\alpha_0\}$ is connected, since $\{\alpha,\epsilon\}\in\IRR$, while, being $\epsilon$ not maximal, $(\Delta\setminus\{\epsilon\})\cup\{\alpha_0\}$ is not connected in the Dynkin diagram of $\affPhi$; hence $\epsilon$ is the unique (simple) root in the Dynkin diagram of $\affPhi$ connected to $\alpha$.

In particular $\coalpha=2\coomega_\alpha-a\coomega_\epsilon$, for some $a\in\N$, and $(\theta,\alpha)=0$; so substituting the first equation in the second one we find $a=2m_\alpha/m_\epsilon$. We compute
$$
\begin{array}{rcl}
\frac{\coomega_\alpha}{m_\alpha}-\frac{\tau s_\alpha\coomega_\alpha}{m_\alpha} & = & \frac{\coomega_\alpha}{m_\alpha}-\tau(-\frac{\coomega_\alpha
}{m_\alpha}+2\frac{\coomega_\epsilon}{m_\epsilon}))\\
 & = & 2(\frac{\coomega_\alpha}{m_\alpha}-\frac{\tau\coomega_\epsilon}{m_\epsilon}).
\end{array}
$$
This finishes the proof that $F_1$ is of type (ii) in our claim when $\epsilon$ is not a maximal root.

Now we need to show that each facet of type (i) and (ii) is adjacent to $F$. But this is clear since: by (i) we have that $F(\{\alpha,\delta\};\tau)$ is a codimension $2$ face in $F(\alpha)\cap F(\delta;\tau)$ while by (ii) $F(\{\alpha,\epsilon\};\tau)$ is a codimension $2$ face in $F(\alpha)\cap F(\alpha;\tau s_\alpha)$.
\end{proof}

If a facet $F$ has some adjacent facet that is in the Weyl group orbit of $F$ then we say that $F$ has \emph{autointersection}; notice that these facets are described in {\it (ii)} of the previous proposition.

We finish this section showing how a facet decomposes in orbits under the action of its stabilizer; this will be used later.

\begin{lemma}\label{lemma_facetOrbits}
Let $\alpha$ be a maximal root. If $\Phi$ is not simply laced and the highest short root $\theta_s\in V(\alpha)$ (i.e. if $(\coomega_\alpha,\theta_s)=m_\alpha$) then $V(\alpha)=(W_{\Delta\setminus\{\alpha\}}\cdot\theta_s)\sqcup(W_{\Delta\setminus\{\alpha\}}\cdot\theta)$ otherwise $V(\alpha)=W_{\Delta\setminus\{\alpha\}}\cdot\theta$.
\end{lemma}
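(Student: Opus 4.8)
The plan is to decompose $V(\alpha)$ using the $W_{\Delta\setminus\{\alpha\}}$-action and the fact that this group is the stabilizer of the face $F(\alpha)$, together with Remark~\ref{remark_betaA} which identifies $\beta_\alpha = \beta_{\{\alpha\}}$ as a long root that is $W_{\Delta\setminus\{\alpha\}}$-conjugate to $\theta$ (indeed $\beta_\alpha = \tau\theta$ for the minimal $\tau\in W^{\Delta\setminus\{\alpha\}}$ carrying $\theta$ down to $\beta_\alpha$, as in the proof of Lemma~\ref{lemma_faceLattice}). First I would observe that any $\beta\in V(\alpha)$ is a root of $\Phi$ lying in the facet, so it is either long or short; hence $V(\alpha) = (V(\alpha)\cap\Phi_l)\sqcup(V(\alpha)\cap\Phi_s)$, and it suffices to show that each piece is a single $W_{\Delta\setminus\{\alpha\}}$-orbit (the short piece being empty unless $\theta_s\in V(\alpha)$).

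For the long roots: since $W$ acts transitively on $\Phi_l$ and $\theta$ is the long dominant root, every long root in $V(\alpha)$ is $W$-conjugate to $\theta$; I would then use that $W_{\Delta\setminus\{\alpha\}}\cdot\theta$ already exhausts all long roots $\beta$ with $(\coomega_\alpha,\beta)=m_\alpha$, because applying reflections $s_\gamma$ with $\gamma\in\Delta\setminus\{\alpha\}$ does not change the $\coomega_\alpha$-coordinate, and starting from $\theta$ one can reach any long root in the facet by the root-string/weak-order argument already carried out in Lemma~\ref{lemma_faceLattice} (the chain $\theta = \tau_0\theta > \tau_1\theta > \cdots$ with simple reflections from $\Delta\setminus\{\alpha\}$, whose endpoints sweep through $V(\alpha)$). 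Concretely: if $\beta\in V(\alpha)$ is long, then $\beta\geq\beta_\alpha$ in the dominant order by Remark~\ref{remark_betaA}, and a maximal chain in weak order from $\theta$ down through $\beta$ (then on to $\beta_\alpha$) uses only simple reflections fixing $\coomega_\alpha$, so $\beta\in W_{\Delta\setminus\{\alpha\}}\cdot\theta$.

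For the short roots, assuming $\Phi$ is not simply laced: if $(\coomega_\alpha,\theta_s)<m_\alpha$ then $\theta_s\notin V(\alpha)$ and, since $\theta_s$ is the short dominant root and every short root $\beta$ satisfies $(\coomega_\alpha,\beta)\leq(\coomega_\alpha,\theta_s)<m_\alpha$ (the pairing being maximized on the dominant representative), $V(\alpha)$ contains no short root, giving $V(\alpha)=W_{\Delta\setminus\{\alpha\}}\cdot\theta$. If instead $\theta_s\in V(\alpha)$, the identical weak-order/root-string argument applied within $\Phi_s$ (using that $W$ acts transitively on $\Phi_s$ and $\theta_s$ is its dominant element) shows every short root in $V(\alpha)$ is $W_{\Delta\setminus\{\alpha\}}$-conjugate to $\theta_s$; the two orbits are disjoint because an element of $W$ cannot carry a long root to a short one. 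This yields $V(\alpha)=(W_{\Delta\setminus\{\alpha\}}\cdot\theta_s)\sqcup(W_{\Delta\setminus\{\alpha\}}\cdot\theta)$.

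The main obstacle I anticipate is the claim that $(\coomega_\alpha,\beta)$ attains its maximum over $\Phi_l$ (resp.\ $\Phi_s$) exactly at the dominant root $\theta$ (resp.\ $\theta_s$), and that starting from that dominant root one stays inside the facet while descending in weak order until one passes below $\beta_\alpha$ — i.e.\ that the descent chain can be chosen to use only reflections $s_\gamma$, $\gamma\in\Delta\setminus\{\alpha\}$. This is essentially the content already extracted in Lemma~\ref{lemma_faceLattice} (every covering reflection in the chain from $\theta$ to $\beta_\alpha$ lies in $\Delta\setminus A$ with $A=\{\alpha\}$), so the argument there can be quoted almost verbatim; the only genuinely new point is running the same reasoning in the short-root orbit, which is legitimate because $W$ acts transitively on $\Phi_s$ with dominant representative $\theta_s$ and root strings are unbroken.
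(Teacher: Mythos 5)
Your proposal is correct and follows essentially the same route as the paper: take $\beta=\tau\theta$ (resp.\ $\tau\theta_s$), pass to a reduced word for the minimal coset representative modulo the stabilizer of the dominant root, and use Theorem~4.3.1(iv) of \cite{brentiBjorner} together with the fact that $(\coomega_\alpha,\cdot)$ is pinned at $m_\alpha$ on $V(\alpha)$ to conclude that no letter of the word is $s_\alpha$. Your detour through $\beta\geq\beta_\alpha$ and Remark~\ref{remark_betaA} is harmless but unnecessary, since $(\coomega_\alpha,\beta)=m_\alpha=(\coomega_\alpha,\theta)$ already forces the conclusion directly, which is how the paper argues.
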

\begin{proof} Clearly $W_{\Delta\setminus\{\alpha\}}\cdot\theta\subset V(\alpha)$ and, if $\Phi$ is not simply laced and $\theta_s\in V(\alpha)$ then  $W_{\Delta\setminus\{\alpha\}}\cdot\theta_s\subset V(\alpha)$.

Conversely, let us suppose $\Phi$ not simply laced, $\theta_s\in V(\alpha)$ and let $\beta$ be a short root in $V(\alpha)$. Clearly $\beta=\tau\theta_s$ for some $\tau\in W$. Notice that $\theta_s$ is a dominant weight, hence its stabilizer $W'$ in $W$ is a standard parabolic subgroup. So let $\tau'=s_{\alpha_1}s_{\alpha_2}\cdots s_{\alpha_r}$, for some $\alpha_1,\alpha_2,\ldots,\alpha_r$, be a reduced expression of the minimal representative of $\tau$ in $W/W'$. By (iv) of Theorem 4.3.1 in \cite{brentiBjorner}, we have $\theta_s-\beta\in\langle\alpha_1,\alpha_2,\ldots,\alpha_r\rangle_\N$ and we conclude $\alpha_i\neq\alpha$ for all $i=1,2,\ldots,r$ since $(\coomega_\alpha,\beta)=m_\alpha=(\coomega_\alpha,\theta_s)$. In particular $\tau'\in W_{\Delta\setminus\{\alpha\}}$ and our claim is proved since $\beta=\tau'\theta_s$.

The proof for the long roots is similar.
\end{proof}

We define $V_l(\alpha)\doteq V(\alpha)\cap\Phi_l=W_{\Delta\setminus\{\alpha\}}\cdot\theta$ and $V_s(\alpha)\doteq V(\alpha)\cap\Phi_s$; this last set is either $W_{\Delta\setminus\{\alpha\}}\cdot\theta_s$ if $\Phi$ is non simply laced and $\theta_s\in V(\alpha)$ or empty otherwise.

\section{Integral closure of the monoids generated by the faces}\label{section_normality}

In this section we study the integral closure of the monoids generated by the faces of the root polytope. In particular we explicitly find the $\leq_F$--minimal elements of $M(F)$, $F$ a face of the root polytope.

First of all notice that, by Lemma \ref{lemma_reducible}, the normality property and the minimal elements for a non--irreducible root system may be determined in terms of the irreducible factors.

Further suppose that an element $\gamma\in M(F)$ is also an element of $M(F')$ with $F'$ a face in the border of $F$, then $F'\subset E'\doteq\langle F'\rangle_\R$ and $E'$ is a proper subspace of $E$. Hence $F'$ is a facet of the root polytope of $\Phi'\doteq\Phi\cap E'$ by Lemma \ref{lemma_subface} and $\Phi'$ is irreducible by (1) of Corollary 4.5 in \cite{celliniMarietti}. 

So throughout this section we will assume that $\Phi$ is irreducible and that $F$ is a facet of $\RP_\Phi$. We define a non--zero element of $M(F)$ to be \emph{proper} if it is not an element of $M(F')$ with $F'$ a face in the border of $F$ and we look for proper minimal elements of $M(F)$.

We want to develop a criterion for proper minimal elements. The minimal elements of $\tau\cdot F$ are the images, under the action of $\tau$, of the minimal elements of $F$. So from now on, we consider a facet $F\doteq F(\alpha)$ for a fixed maximal root $\alpha\in\Delta$. Recall that $F$ is defined by the vector $\lambda\doteq\frac{\coomega_\alpha}{m_\alpha}$ and, in particular, $V\doteq V(F(\alpha))$ is the set of roots $\beta$ such that $(\coomega_\alpha,\beta)=m_\alpha$.

Given $\gamma\in E$ let $\gamma_+$ be the unique element in $W_{\Delta\setminus\{\alpha\}}\cdot\gamma$ that is dominant for $\Delta\setminus\{\alpha\}$ (i.e. such that $(\gamma_+,\delta)\geq0$ for all $\delta\in\Delta$, $\delta\neq\alpha$). Given a simple root $\delta$ and $\tau\in W$ let $\nabla_{\alpha,\delta}^\tau$ be the vector $\frac{\coomega_\alpha}{m_\alpha}-\frac{\tau\coomega_\delta}{m_\delta}$; further we set $\nabla_{\alpha,\delta}\doteq\nabla_{\alpha,\delta}^e$. Finally let $\Psi_\alpha$ be the set of simple roots $\delta$ such that: either $\delta\neq\alpha$ is maximal and $\{\alpha,\delta\}\in\IRR$ (i.e. the complement of $\{\alpha,\delta\}$ is connected in $\affDelta$) or $F$ has autointersection and $\delta$ is the unique root adjacent to $\alpha$ (see (ii) of Proposition \ref{proposition_adjacentFacet}, in this case $\delta$ is not maximal).

\begin{remark}\label{remark_separatingHyperplanes}
By Proposition \ref{proposition_adjacentFacet} the elements $\nabla_{\alpha,\delta}^\tau$, with $\delta\in\Psi_\alpha$ and $\tau\in W_{\Delta\setminus\{\alpha\}}$, define the hyperplanes separating $F$ from its adjacent facets. Indeed this is clear for $\delta$ maximal. Further, if $F$ has autointersection and $\delta$ is the unique root adjacent to $\alpha$, we have $2\nabla_{\alpha,\delta}^\tau=\nabla_{\alpha,\alpha}^{\tau s_\alpha}$ since, as seen in the proof of Proposition \ref{proposition_adjacentFacet}, $\tau s_\alpha\coomega_\alpha=-\coomega_\alpha+2m_\alpha\frac{\tau\coomega_\delta}{m_\delta}$.
\end{remark}

The following simple lemma will be useful in the sequel.
\begin{lemma}\label{lemma_dominantInequality}
Suppose $\gamma$ is $(\Delta\setminus\{\alpha\})$--dominant and let $\tau\in W_{\Delta\setminus\{\alpha\}}$. Then $(\nabla_{\alpha,\delta},\gamma)\leq(\nabla_{\alpha,\delta}^\tau,\gamma)$ for any $\delta\in\Delta$.
\end{lemma}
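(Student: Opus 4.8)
The plan is to reduce the inequality $(\nabla_{\alpha,\delta},\gamma)\le(\nabla_{\alpha,\delta}^\tau,\gamma)$ to a statement purely about the coweight $\coomega_\delta$ and the Weyl group element $\tau$. Since $\nabla_{\alpha,\delta}=\frac{\coomega_\alpha}{m_\alpha}-\frac{\coomega_\delta}{m_\delta}$ and $\nabla_{\alpha,\delta}^\tau=\frac{\coomega_\alpha}{m_\alpha}-\frac{\tau\coomega_\delta}{m_\delta}$, subtracting cancels the common term $\frac{\coomega_\alpha}{m_\alpha}$, and the desired inequality is equivalent to
$$
(\tau\coomega_\delta,\gamma)\le(\coomega_\delta,\gamma),
$$
after clearing the positive denominator $m_\delta$. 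So it suffices to prove this last inequality for every $\delta\in\Delta$, every $(\Delta\setminus\{\alpha\})$--dominant $\gamma$, and every $\tau\in W_{\Delta\setminus\{\alpha\}}$.

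First I would observe that $\coomega_\delta$ is a dominant coweight for the full system $\Phi$ (its pairing with every simple root is $0$ or $1$), hence in particular it is dominant for the parabolic subsystem with simple roots $\Delta\setminus\{\alpha\}$; write $W'\doteq W_{\Delta\setminus\{\alpha\}}$ and let $\Phi'$ be the corresponding root subsystem. The key step is then the standard fact about the weak/dominant order on a $W'$--orbit: for a $\Phi'$--dominant weight $\coomega_\delta$ and any $\tau\in W'$, one has $\coomega_\delta-\tau\coomega_\delta\in\langle\Delta\setminus\{\alpha\}\rangle_{\N}$, i.e. $\coomega_\delta-\tau\coomega_\delta$ is a non--negative integral combination of the simple roots of $\Phi'$. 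This is exactly the kind of statement already invoked in the excerpt via (iv) of Theorem 4.3.1 in \cite{brentiBjorner} (used in the proofs of Proposition \ref{proposition_faceBorder} and Lemma \ref{lemma_facetOrbits}), applied now to $W'$ acting on the $W'$--dominant coweight $\coomega_\delta$. Pairing this difference with $\gamma$ and using that $\gamma$ is $(\Delta\setminus\{\alpha\})$--dominant, so $(\delta',\gamma)\ge0$ for every $\delta'\in\Delta\setminus\{\alpha\}$, gives $(\coomega_\delta-\tau\coomega_\delta,\gamma)\ge0$, which is precisely the inequality we reduced to.

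Assembling: from $(\tau\coomega_\delta,\gamma)\le(\coomega_\delta,\gamma)$ divide by $m_\delta>0$ and add $(\frac{\coomega_\alpha}{m_\alpha},\gamma)$ to both sides to recover $(\nabla_{\alpha,\delta}^\tau,\gamma)\ge(\nabla_{\alpha,\delta},\gamma)$, completing the proof. I expect the only subtlety — hardly an obstacle — to be bookkeeping around which system $\coomega_\delta$ is being regarded as dominant for: one must be careful that $\coomega_\delta$ is a coweight of $\Phi$ (as defined in Section \ref{section_rootSystem}), but that when restricted to the parabolic $W'$ it remains dominant, so the weak--order lemma applies with $W'$ in place of $W$; this is immediate from the positivity of all pairings $(\coomega_\delta,\delta')$, $\delta'\in\Delta$. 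No case analysis on the root system type is needed.
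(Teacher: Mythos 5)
Your proof is correct and is essentially the paper's argument transposed by adjointness: the paper applies Theorem 4.3.1(iv) of Bj\"orner--Brenti to the $(\Delta\setminus\{\alpha\})$--dominant element $\gamma$, writing $\tau^{-1}\gamma=\gamma-\eta$ with $\eta$ a non--negative combination of $\Delta\setminus\{\alpha\}$ and then pairing $\eta$ with $\coomega_\delta$, whereas you apply the same fact to $\coomega_\delta$ and pair the difference with $\gamma$ --- the two computations produce the identical non--negative quantity $(\coomega_\delta,\gamma-\tau^{-1}\gamma)=(\coomega_\delta-\tau\coomega_\delta,\gamma)$. The only cosmetic slip is the assertion that $\coomega_\delta-\tau\coomega_\delta$ is a non--negative \emph{integral} combination of the simple roots (it is an integral combination of the simple \emph{coroots} in $\Delta\setminus\{\alpha\}$, hence in general only a non--negative rational combination of the simple roots), but since only non--negativity of the coefficients is used against the $(\Delta\setminus\{\alpha\})$--dominance of $\gamma$, nothing is affected.
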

\begin{proof} By (iv) of Theorem 4.3.1 of \cite{brentiBjorner} we have $\tau^{-1}\gamma=\gamma-\eta$ with $\eta$ a non--negative linear combination of $\Delta\setminus\{\alpha\}$ since $\gamma$ is $(\Delta\setminus\{\alpha\})$--dominant. Using $\tau\in W_{\Delta\setminus\{\alpha\}}$ we get
$$
\begin{array}{rcl}
(\nabla_{\alpha,\delta}^\tau,\gamma) & = & (\frac{\coomega_\alpha}{m_\alpha}-\frac{\tau\coomega_\delta}{m_\delta},\gamma)\\
 & = & (\frac{\coomega_\alpha}{m_\alpha},\gamma)-(\frac{\coomega_\delta}{m_\delta},\tau^{-1}\gamma)\\
 & = & (\frac{\coomega_\alpha}{m_\alpha},\gamma)-(\frac{\coomega_\delta}{m_\delta},\gamma-\eta)\\
 & = & (\nabla_{\alpha,\delta},\gamma)+(\frac{\coomega_\delta}{m_\delta},\eta)
\end{array}
$$
and our claim follows since $(\frac{\coomega_\delta}{m_\delta},\eta)\geq0$.
\end{proof}

\begin{proposition}\label{proposition_criterionProper} An element $\gamma\in R$ is a proper element of $M(F)$ if and only if: $0<(\nabla_{\alpha,\delta},\gamma_+)$ for any $\delta\in\Psi_\alpha$.
\end{proposition}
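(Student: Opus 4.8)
The plan is to characterize when $\gamma \in R$ fails to be proper, i.e. when $\gamma \in M(F') $ for some facet $F'$ in the border of $F$, and to translate this into an inequality condition on the dominant representative $\gamma_+$. First I would observe that $\gamma \in M(F) = C(V)\cap R$ already forces $(\lambda,\gamma) = (\frac{\coomega_\alpha}{m_\alpha},\gamma) \geq 0$ with equality only at the origin, and that membership in $M(F)$ is preserved under the action of $W_{\Delta\setminus\{\alpha\}}$, so we may freely replace $\gamma$ by $\gamma_+$; note $\gamma \in M(F)$ iff $\gamma_+ \in M(F)$ since $M(F) = C(V)\cap R$ and $C(V)$ is $W_{\Delta\setminus\{\alpha\}}$--stable. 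The border faces of $F$ of codimension one in $F$ are exactly the codimension--$2$ faces $F \cap F_1$ with $F_1$ adjacent to $F$, and by Proposition \ref{proposition_adjacentFacet} together with Remark \ref{remark_separatingHyperplanes} these are cut out inside the supporting hyperplane of $F$ by the vanishing of the functionals $\nabla_{\alpha,\delta}^\tau$ with $\delta \in \Psi_\alpha$ and $\tau \in W_{\Delta\setminus\{\alpha\}}$. Since $\gamma \in C(V)$, for every such functional we have $(\nabla_{\alpha,\delta}^\tau,\gamma)\geq 0$, and $\gamma$ lies on the border face associated with $(\delta,\tau)$ precisely when equality holds.

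Next I would carry out the reduction to $\tau = e$. Assuming $\gamma = \gamma_+$ is $(\Delta\setminus\{\alpha\})$--dominant, Lemma \ref{lemma_dominantInequality} gives $(\nabla_{\alpha,\delta},\gamma_+) \leq (\nabla_{\alpha,\delta}^\tau,\gamma_+)$ for every $\delta$ and every $\tau \in W_{\Delta\setminus\{\alpha\}}$. Consequently, $(\nabla_{\alpha,\delta}^\tau,\gamma_+) = 0$ for some $\tau$ forces $(\nabla_{\alpha,\delta},\gamma_+) = 0$ as well (both being nonnegative, the smaller one vanishes); conversely if $(\nabla_{\alpha,\delta},\gamma_+) = 0$ then already $\tau = e$ exhibits $\gamma_+$ on the corresponding border face. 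Hence among all the separating functionals, $\gamma_+$ meets the border of $F$ if and only if $(\nabla_{\alpha,\delta},\gamma_+) = 0$ for some $\delta \in \Psi_\alpha$. Combining with the previous paragraph: $\gamma$ is a non--proper element of $M(F)$ exactly when $(\nabla_{\alpha,\delta},\gamma_+) = 0$ for some $\delta \in \Psi_\alpha$, which is the negation of the stated condition; so $\gamma$ is proper iff $0 < (\nabla_{\alpha,\delta},\gamma_+)$ for all $\delta \in \Psi_\alpha$.

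The step I expect to require the most care is making rigorous the claim that the border faces of $F$ (as a polytope in its own affine hull) are exhausted, up to the $W_{\Delta\setminus\{\alpha\}}$--action, by the hyperplane sections $\{\nabla_{\alpha,\delta}^\tau = 0\}$ with $\delta \in \Psi_\alpha$: this needs Proposition \ref{proposition_faces} to list the codimension--$2$ faces contained in $F(\alpha)$ (they are the $F(\{\alpha,\epsilon\};\tau)$ with $\{\alpha,\epsilon\}\in\IRR$, $\tau \in W_{\Delta\setminus\{\alpha\}}$), Proposition \ref{proposition_adjacentFacet} to identify which adjacent facet each such face lies on, and Remark \ref{remark_separatingHyperplanes} to identify the defining functional, taking care in the autointersection case that $\nabla_{\alpha,\delta}^\tau$ and $\nabla_{\alpha,\alpha}^{\tau s_\alpha}$ are positively proportional so the two descriptions agree. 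One also has to note that a facet $F'$ of $\RP_\Phi$ in the topological border of $F(\alpha)$ which meets $F(\alpha)$ only in lower codimension still contributes no new vanishing locus, because any $\gamma \in M(F(\alpha))\cap M(F')$ would then lie in a face of $F(\alpha)$ of codimension at least $2$ inside $F(\alpha)$, hence in some codimension--$2$ face already accounted for; the strong convexity of $N(F)$ from Lemma \ref{lemma_order} and the fact that $F(\alpha)$ is the $1$--level set of $\lambda$ on $\Phi$ keep these incidences under control. Once this combinatorial bookkeeping is in place, the analytic part is immediate from Lemma \ref{lemma_dominantInequality}.
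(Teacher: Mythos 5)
Your argument is correct and follows essentially the same route as the paper's proof: reduce to the $(\Delta\setminus\{\alpha\})$--dominant representative $\gamma_+$, identify failure of properness with the vanishing of one of the separating functionals $\nabla_{\alpha,\delta}^\tau$ coming from Proposition \ref{proposition_adjacentFacet} and Remark \ref{remark_separatingHyperplanes}, and use Lemma \ref{lemma_dominantInequality} to reduce the quantification over $\tau\in W_{\Delta\setminus\{\alpha\}}$ to the single case $\tau=e$. The extra bookkeeping in your last paragraph about higher--codimension subfaces is sound but not really needed, since any element of a proper subface already lies in a maximal (codimension--one) subface of $F$, which is exactly how the paper phrases it.
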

\begin{proof} First of all notice that the set of proper elements of $M(F)$ is clearly stable by the action of $W_{\Delta\setminus\{\alpha\}}$ since this is the stabilizer of $F$. So $\gamma$ is proper if and only if $\gamma_+$ is proper.

The element $\gamma_+$ is a proper element of $M(F)$ if it is not an element of a subface of $F$, hence if and only if it is in $M(F)$ and not in any hyperplane separating $F$ from a facet adjacent to $F$. So if a facet $F'$ adjacent to $F$ is defined by a vector $\mu$ then we must have $(\lambda-\mu,\gamma_+)>0$ since $\lambda-\mu=0$ is the hyperplane in $E$ separating $F$ and $F'$ and $(\lambda-\mu,\beta)=1-(\mu,\beta)\geq 0$ for all $\beta\in V$.

As seen above these hyperplanes are defined by $\nabla_{\alpha,\delta}^\tau$ for $\delta\in\Psi_\alpha$, $\tau\in W_{\Delta\setminus\{\alpha\}}$; so we have proved that $0<(\nabla_{\alpha,\delta}^\tau,\gamma_+)$ for all $\delta\in\Psi_\alpha$, $\tau\in W_{\Delta\setminus\{\alpha\}}$. In particular $0<(\nabla_{\alpha,\delta},\gamma_+)$ for any $\delta\in\Psi_\alpha$.

Conversely $(\nabla_{\alpha,\delta}^\tau,\gamma_+)\geq(\nabla_{\alpha,\delta},\gamma_+)$ for any $\delta\in\Psi_\alpha$, $\tau\in W_{\Delta\setminus\{\alpha\}}$ by Lemma \ref{lemma_dominantInequality}. This proves that $\gamma_+$ is proper if and only if $0<(\nabla_{\alpha,\delta},\gamma_+)$ for any $\delta\in\Psi_\alpha$.
\end{proof}

Our next result is a criterion for minimal elements of $M(F)$. Given $\delta\in\Psi_\alpha$ let
$$
\begin{array}{rcl}
D_l(\delta) & \doteq & \max\limits_{\beta\in V\cap\Phi_l}(\nabla_{\alpha,\delta},\beta)\\
\\
D_s(\delta) & \doteq & \left\{
\begin{array}{ll}
\max\limits_{\beta\in V\cap\Phi_s}(\nabla_{\alpha,\delta},\beta) & \textrm{if }V\cap\Phi_s\neq\varnothing,\\
+\infty & \textrm{if }V\cap\Phi_s=\varnothing.
\end{array}
\right.
\end{array}
$$

\begin{proposition}\label{proposition_criterionMinimal} A proper element $\gamma$ of $M(F)$ is minimal if and only if: for each $\beta\in V$ there exists $\delta\in\Psi_\alpha$ and $\tau\in W_{\Delta\setminus\{\alpha\}}$, depending on $\beta$, such that $0<(\nabla_{\alpha,\delta}^\tau,\gamma)<(\nabla_{\alpha,\delta}^\tau,\beta)$.

In particular, if there exists a proper minimal element for $M(F)$ then there exist $\delta_l,\delta_s\in\Psi_\alpha$ such that
\begin{equation}\label{equation_criMinL}
\frac{1}{\lcm(m_\alpha,m_{\delta_l})}<D_l(\delta_l)
\end{equation}
\begin{equation}\label{equation_criMinS}
\frac{1}{lcm(m_\alpha,m_{\delta_s})}<D_s(\delta_s)
\end{equation}
\end{proposition}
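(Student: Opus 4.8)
The plan is to characterize minimality via the order $\leq_F$ introduced in Lemma \ref{lemma_order}: a proper element $\gamma\in M(F)$ fails to be minimal precisely when there is a root $\beta\in V$ with $\gamma-\beta\in M(F)$, that is, $\gamma-\beta\in C(V)\cap R$. So the heart of the argument is to translate the condition $\gamma-\beta\in C(V)$ into an inequality involving the separating functionals $\nabla_{\alpha,\delta}^\tau$. First I would recall that, since $F$ is a facet defined by $\lambda=\frac{\coomega_\alpha}{m_\alpha}$, the cone $C(V)$ is cut out inside the halfspace $(\lambda,\cdot)\geq 0$ (more precisely inside $\{u:(\lambda,u)\geq 0\}$ intersected with the appropriate linear span) by exactly the inequalities $(\nabla_{\alpha,\delta}^\tau,\cdot)\geq 0$ for $\delta\in\Psi_\alpha$, $\tau\in W_{\Delta\setminus\{\alpha\}}$, because by Remark \ref{remark_separatingHyperplanes} and Proposition \ref{proposition_adjacentFacet} these are the facet-defining functionals of the cone over $F$. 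Hence $\gamma-\beta\in C(V)$ holds if and only if $(\nabla_{\alpha,\delta}^\tau,\gamma-\beta)\geq 0$ for all such $\delta,\tau$ (the condition $\gamma-\beta\in R$ being automatic as $\gamma,\beta\in R$), equivalently $(\nabla_{\alpha,\delta}^\tau,\gamma)\geq(\nabla_{\alpha,\delta}^\tau,\beta)$ for all $\delta\in\Psi_\alpha$, $\tau\in W_{\Delta\setminus\{\alpha\}}$.

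Negating this, $\gamma$ is $\leq_F$-minimal among proper elements if and only if for every $\beta\in V$ there is some pair $(\delta,\tau)$ with $(\nabla_{\alpha,\delta}^\tau,\gamma)<(\nabla_{\alpha,\delta}^\tau,\beta)$; and since $\gamma$ is proper we already know from Proposition \ref{proposition_criterionProper} that $(\nabla_{\alpha,\delta}^\tau,\gamma)>0$ for all such pairs (using Lemma \ref{lemma_dominantInequality} to pass from $\delta$ to the worst $\tau$, noting $\gamma=\gamma_+$ may be assumed $(\Delta\setminus\{\alpha\})$-dominant by $W_{\Delta\setminus\{\alpha\}}$-stability of both properness and minimality). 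This gives exactly the stated biconditional $0<(\nabla_{\alpha,\delta}^\tau,\gamma)<(\nabla_{\alpha,\delta}^\tau,\beta)$. For the "in particular" clause: if such a minimal $\gamma$ exists, apply the criterion to the long roots $\beta\in V\cap\Phi_l$, and among all the pairs $(\delta,\tau)$ that arise pick one $\delta_l$; using Lemma \ref{lemma_dominantInequality} one has $(\nabla_{\alpha,\delta_l}^\tau,\gamma)\geq(\nabla_{\alpha,\delta_l},\gamma)$, and since $(\nabla_{\alpha,\delta_l},\gamma)$ is a nonzero element of $\frac{1}{\lcm(m_\alpha,m_{\delta_l})}\Z$ (both $\coomega_\alpha$ and $\coomega_{\delta_l}$ evaluate integrally on $R$, so $(\nabla_{\alpha,\delta_l},\gamma)\in\frac{1}{m_\alpha}\Z+\frac{1}{m_{\delta_l}}\Z=\frac{1}{\lcm(m_\alpha,m_{\delta_l})}\Z$) and positive, it is at least $\frac{1}{\lcm(m_\alpha,m_{\delta_l})}$; combining with $(\nabla_{\alpha,\delta_l}^\tau,\gamma)<(\nabla_{\alpha,\delta_l}^\tau,\beta)\leq D_l(\delta_l)$ (the last inequality again by choosing the dominant representative and Lemma \ref{lemma_dominantInequality}, so that $(\nabla_{\alpha,\delta_l}^\tau,\beta)$ is bounded by the maximum over $V\cap\Phi_l$) yields \eqref{equation_criMinL}. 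The same argument applied to short roots $\beta\in V\cap\Phi_s$ — which is vacuous if $V\cap\Phi_s=\varnothing$, consistent with $D_s(\delta)=+\infty$ — gives \eqref{equation_criMinS}.

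The main obstacle I expect is the careful bookkeeping around two points: first, verifying that the functionals $\nabla_{\alpha,\delta}^\tau$ with $\delta\in\Psi_\alpha$, $\tau\in W_{\Delta\setminus\{\alpha\}}$ really give a \emph{complete} irredundant description of the cone $C(V)$ relative to the span of $F$ (so that "$\geq 0$ on all of them" is equivalent to membership in $C(V)$), which requires invoking Proposition \ref{proposition_adjacentFacet} on adjacent facets together with the observation that the cone over a polytope is the intersection of the cones over its facets; and second, the reduction to $(\Delta\setminus\{\alpha\})$-dominant $\gamma$ and the repeated use of Lemma \ref{lemma_dominantInequality} to replace an arbitrary $\tau$ by the identity on the $\gamma$ side while keeping a valid upper bound $D_l(\delta),D_s(\delta)$ on the $\beta$ side — one must check the direction of each inequality is consistent (it is, because $\gamma$ is dominant so Lemma \ref{lemma_dominantInequality} lower-bounds $(\nabla_{\alpha,\delta}^\tau,\gamma)$ by $(\nabla_{\alpha,\delta},\gamma)$, while for the bound on $\beta$ we instead maximize over the orbit, i.e. over $V\cap\Phi_l$ or $V\cap\Phi_s$ directly). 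Neither step is deep, but both are the kind of place where a sign or a quantifier can go wrong, so I would write them out explicitly.
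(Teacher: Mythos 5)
Your proof is correct and follows essentially the same route as the paper: minimality is reduced to testing $\gamma-\beta\notin M(F)=C(V)\cap R$ for single roots $\beta\in V$, membership in $C(V)$ is detected by the adjacent--facet functionals $\nabla_{\alpha,\delta}^\tau$, and the $1/\lcm(m_\alpha,m_\delta)$ bound comes from integrality of the coweights on $R$ while $D_l,D_s$ bound $(\nabla_{\alpha,\delta}^\tau,\beta)$ because $\tau^{-1}\beta$ remains in $V\cap\Phi_l$ (resp.\ $V\cap\Phi_s$). The only nitpick is that this last upper bound is not an application of Lemma \ref{lemma_dominantInequality} (which bounds the $\gamma$ side from below) but simply of the $W_{\Delta\setminus\{\alpha\}}$--stability of $V\cap\Phi_l$; the paper likewise avoids your reduction to dominant $\gamma$ by writing $(\nabla_{\alpha,\delta}^\tau,\gamma)=(\nabla_{\alpha,\delta},\tau^{-1}\gamma)$ and using $\tau^{-1}\gamma\in R$ directly.
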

\begin{proof}
Suppose that $\gamma\in M(F)$ is a proper minimal element and $\beta\in V$. Since $\gamma-\beta\not\in M(F)$ there exists a facet $F'$ adjacent to $F$, defined by a vector $\mu$, such that $\gamma$ and $\gamma-\beta$ are in different half--spaces defined by $\lambda-\mu$. In particular:
\begin{itemize}
\item[(i)] $(\lambda-\mu,\gamma)>0$ since $\gamma$ is a proper element of $M(F)$,
\item[(ii)] $(\lambda-\mu,\gamma-\beta)<0$ since $\gamma-\beta\not\in M(F)$.
\end{itemize}
These two inequalities are clearly equivalent to our claim by the description of the hyperplanes separating $F$ from its adjacent facets given in Remark \ref{remark_separatingHyperplanes}.

In order to prove the converse let $\gamma\in M(F)$ be proper and fulfil the above condition. We have to show that for each $\eta=\sum_{\beta\in V} c_\beta\beta$, $c_\beta\in\N$, the element $\gamma-\eta$ is not in $M(F)$ unless $\eta=0$. Let $\beta_0$ be such that $c_{\beta_0}>0$ and let $\mu\doteq\mu_{\beta_0}$ be such that $0<(\lambda-\mu,\gamma)<1-(\mu,\beta_0)$ as in the condition. We have $(\lambda-\mu,\eta)=\sum_{\beta\in V}c_\beta(1-(\mu,\beta))\geq 1-(\mu,\beta_0)>(\lambda-\mu,\gamma)$, where we have used that $c_\beta\geq0$, $1-(\mu,\beta)\geq0$ for all $\beta\in V$ and $c_{\beta_0}\geq 1$.

So we have $(\lambda-\mu,\gamma-\eta)<0$ and this shows that $\gamma-\eta\not\in M(F)$ since for all elements $\varphi$ of $M(F)$ we have $(\lambda-\mu,\varphi)\geq0$ using the fact that $\mu$ defines a facet adjacent to $F$.

Now we prove the last claim. We see the proof of (\ref{equation_criMinL}) for $D_l$, the proof of (\ref{equation_criMinS}) is analogous. So suppose that $\gamma$ is a proper minimal element of $M(F)$ and let $\beta$ be a long root in $V$. Then, by what already proved, there exists $\delta\in\Psi_\alpha$ and $\tau\in W_{\Delta\setminus\{\alpha\}}$ such that $0<(\nabla_{\alpha,\delta}^\tau,\gamma)<(\nabla_{\alpha,\delta}^\tau,\beta)$.

Now notice that $(\nabla_{\alpha,\delta}^\tau,\gamma)=(\nabla_{\alpha,\delta},\tau^{-1}\gamma)$ since $\tau$ stabilizes $\coomega_\alpha$. Further $\tau^{-1}\gamma\in R$ and we find that $(\coomega_\alpha,\tau^{-1}\gamma)$ and $(\coomega_\delta,\tau^{-1}\gamma)$ are integer. So $(\nabla_{\alpha,\delta}^\tau,\gamma)=(\frac{\coomega_\alpha}{m_\alpha},\tau^{-1}\gamma)-(\frac{\coomega_\delta}{m_\delta},\tau^{-1}\gamma)$ is an integral multiple of $1/\lcm(m_\alpha,m_\delta)$; further it is non negative as proved above.

Finally $(\nabla_{\alpha,\delta}^\tau,\beta)=(\nabla_{\alpha,\delta},\tau^{-1}\beta)\leq D_l(\delta)$ since $\tau$ stabilizes $V\cap\Phi_l$. This finishes the proof of (\ref{equation_criMinL}).
\end{proof}

The following lemma will be used for proving the subsequent necessary conditions for minimality. Let $\delta$, $\delta'$ be two different simple roots. Being $\Phi$ irreducible, the Dynkin diagram of $\Phi$ is a tree, hence there exists the minimal connected subset of $\Delta$ containing $\delta$ and $\delta'$; it is a segment of simple roots adjacent by pairs that we denote by $[\delta,\delta']$.

\begin{lemma}\label{lemma_chain}
If $\delta$, $\delta'$ are simple roots and $\tau\in W$ is such that $(\tau\coomega_\delta,\omega_{\delta'})<(\coomega_\delta,\omega_{\delta'})$ then $\coomega_\delta-\tau\coomega_\delta\geq\sum_{\epsilon\in[\delta,\delta']}\coepsilon$.
\end{lemma}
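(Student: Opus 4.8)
The plan is to reduce the statement to the case in which $\tau$ is the minimal length representative of its coset modulo $W_{\Delta\setminus\{\delta\}}$ and then to analyse a reduced word for it. Since $(\coomega_\delta,\epsilon)=0$ for every $\epsilon\in\Delta\setminus\{\delta\}$, we have $W_{\Delta\setminus\{\delta\}}=\Stab(\coomega_\delta)$; hence if $v\in W^{\Delta\setminus\{\delta\}}$ is the minimal length representative of $\tau W_{\Delta\setminus\{\delta\}}$ then $v\coomega_\delta=\tau\coomega_\delta$, and as both the hypothesis and the conclusion involve only $\tau\coomega_\delta$, it is enough to prove the lemma with $v$ in place of $\tau$. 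Note also that $v\neq e$, for otherwise the hypothesis would read $(\coomega_\delta,\omega_{\delta'})<(\coomega_\delta,\omega_{\delta'})$.

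Fix a reduced expression $v=s_{\beta_1}s_{\beta_2}\cdots s_{\beta_r}$ and let $S\doteq\{\beta_1,\beta_2,\ldots,\beta_r\}$ be its support (a subset of $\Delta$ not depending on the chosen reduced expression). Applying, to the coroot system (for which $\coomega_\delta$ is a dominant weight and $v$ is still the minimal representative of $v\Stab(\coomega_\delta)$), the same consequence of (iv) of Theorem 4.3.1 in \cite{brentiBjorner} used in Lemma \ref{lemma_facetOrbits}, we obtain a strictly decreasing chain $\coomega_\delta>s_{\beta_r}\coomega_\delta>\cdots>v\coomega_\delta$ in the dominant order; hence
$$
\coomega_\delta-v\coomega_\delta=\sum_{\epsilon\in\Delta}c_\epsilon\coepsilon
$$
where the $c_\epsilon$ are non-negative integers — the left hand side lies in the lattice $\langle\,\coepsilon\mid\epsilon\in\Delta\,\rangle_\Z$ because $\coomega_\delta$ is in the coweight lattice, and its coefficients are non-negative because $\coomega_\delta$ is dominant — and $c_\epsilon>0$ exactly for $\epsilon\in S$ (each $\coalpha_{\beta_i}$ enters with strictly positive coefficient, by strictness of the chain). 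In particular $c_\epsilon\geq1$ for every $\epsilon\in S$. Now $(\coepsilon,\omega_{\delta'})=\delta_{\epsilon,\delta'}$, so $c_{\delta'}=(\coomega_\delta-v\coomega_\delta,\omega_{\delta'})=(\coomega_\delta,\omega_{\delta'})-(\tau\coomega_\delta,\omega_{\delta'})>0$ by hypothesis, whence $\delta'\in S$; and $\delta\in S$ as well, since $\delta\notin S$ would give $v\in W_S\subseteq W_{\Delta\setminus\{\delta\}}$, impossible for $v\in W^{\Delta\setminus\{\delta\}}$ with $v\neq e$.

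It remains to prove that $S$ is connected in the Dynkin diagram of $\Phi$; granting this, since that diagram is a tree, the connected set $S$ contains the unique segment $[\delta,\delta']$ joining $\delta$ and $\delta'$, so $c_\epsilon\geq1$ for all $\epsilon\in[\delta,\delta']$, i.e. $\coomega_\delta-\tau\coomega_\delta-\sum_{\epsilon\in[\delta,\delta']}\coepsilon=\sum_{\epsilon\in\Delta}(c_\epsilon-[\epsilon\in[\delta,\delta']])\coepsilon$ is a non-negative combination of simple coroots, which is exactly the assertion $\coomega_\delta-\tau\coomega_\delta\geq\sum_{\epsilon\in[\delta,\delta']}\coepsilon$. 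For the connectedness I would argue by contradiction: if $S=S_1\sqcup S_2$ with $S_1,S_2$ nonempty and no edge joining them, then $W_S=W_{S_1}\times W_{S_2}$ and $v=v_1v_2$ with $v_i\in W_{S_i}$, $v_i\neq e$, and $\ell(v)=\ell(v_1)+\ell(v_2)$; choosing the notation so that $\delta\in S_1$, the factor $v_2$ lies in $W_{S_2}\subseteq W_{\Delta\setminus\{\delta\}}$, so $v_1=vv_2^{-1}$ belongs to the coset $vW_{\Delta\setminus\{\delta\}}$ and has strictly smaller length than $v$, contradicting the minimality of $v$.

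The step I expect to need the most care is the root/coroot bookkeeping in the second paragraph: Theorem 4.3.1(iv) of \cite{brentiBjorner} must be invoked for the dual root system, so as to expand $\coomega_\delta-v\coomega_\delta$ over the simple coroots, and one has to check that the resulting coefficients are genuine positive integers — which uses both the strictness of the chain and the fact that a difference of two coweights in the same $W$-orbit lies in the coroot lattice. The remaining ingredients (minimal coset representatives and their suffixes, the product decomposition $W_S=W_{S_1}\times W_{S_2}$ for a disconnected $S$, and the fact that a tree has unique paths) are standard.
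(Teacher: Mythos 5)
Your proof is correct and follows the same skeleton as the paper's: pass to the minimal representative $v$ of the coset modulo $\Stab(\coomega_\delta)=W_{\Delta\setminus\{\delta\}}$, use (iv) of Theorem~4.3.1 of Bj\"orner--Brenti to get a strictly decreasing chain and hence an expansion $\coomega_\delta-v\coomega_\delta=\sum_\epsilon c_\epsilon\coepsilon$ with integral $c_\epsilon>0$ exactly on the support $S$ of a reduced word, check $\delta,\delta'\in S$, and conclude by connectedness of $S$ together with the tree property of the Dynkin diagram. The one step you handle genuinely differently is the connectedness of $S$: the paper proves by induction along the reduced word that each initial segment $\{\alpha_1,\ldots,\alpha_i\}$ of the support is connected, extracting the adjacency of $\alpha_j$ and $\alpha_{i+1}$ from the sign change of $(\lambda_j,\alpha_{i+1})$ at the minimal $j$ with $(\lambda_j,\alpha_{i+1})>0$; you instead note that a disconnection $S=S_1\sqcup S_2$ would give $W_S=W_{S_1}\times W_{S_2}$ and a strictly shorter representative $vv_2^{-1}$ of the same coset, contradicting minimality. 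Your variant is shorter and purely Coxeter--theoretic (it also absorbs the verification that $\delta\in S$, which in the paper is the observation $\alpha_1=\delta$), while the paper's induction is more computational but self-contained at the level of inner products; the coefficient bookkeeping you flag as delicate (positivity from strictness of the chain, integrality from $\coomega_\delta-v\coomega_\delta$ lying in the coroot lattice) is exactly what the paper uses implicitly in its final display $\coomega_\delta-\tau\coomega_\delta\geq\coalpha_1+\cdots+\coalpha_r$.
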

\begin{proof}
Let $\tau'\doteq s_{\alpha_r}s_{\alpha_{r-1}}\cdots s_{\alpha_1}$, with $\alpha_1,\alpha_2,\ldots,\alpha_r$ simpe roots, be a reduced expression for the minimal representative $\tau'$ of $\tau$ in $W/W_\delta$. Notice that defining $\lambda_i\doteq s_{\alpha_i}\cdots s_{\alpha_1}\coomega_\delta$, for $i=0,1,\ldots,r$, we have $(\lambda_i,\alpha_{i+1})>0$ for all $i=0,1,\ldots,r-1$ by (iv) of Theorem 4.3.1 in \cite{brentiBjorner}.

We will show by induction on $i$ that $\{\alpha_1,\alpha_2,\ldots,\alpha_i\}\subset\Delta$ is connected in the Dynkin diagram. The claim is clearly true for $i=1$ so suppose $i>1$. We have $(\lambda_i,\alpha_{i+1})>0$, hence we may consider the minimum $j\geq0$ such that $(\lambda_j,\alpha_{i+1})>0$. If $j=0$ then $\alpha_{i+1}=\delta=\alpha_1$ since $\lambda_0=\coomega_\delta$; in particular $\{\alpha_1,\alpha_2,\ldots,\alpha_{i+1}\}=\{\alpha_1,\alpha_2,\ldots,\alpha_i\}$ which is connected by induction.

So suppose $j>0$. Hence $(\lambda_{j-1},\alpha_{i+1})\leq0$. We have $\lambda_j=s_{\alpha_j}\lambda_{j-1}=\lambda_{j-1}-a\coalpha_j$, with $a\doteq(\lambda_{j-1},\alpha_j)>0$. Further $(\lambda_j,\alpha_{i+1})=(\lambda_{j-1},\alpha_{i+1})-a(\alpha_j,\coalpha_{i+1})>0$ so $(\alpha_j,\coalpha_{i+1})<0$ which implies that $\alpha_j$ and $\alpha_{i+1}$ are connected in the Dynkin diagram of $\Phi$. So $\{\alpha_1,\alpha_2,\ldots,\alpha_{i+1}\}$ is connected and our claim is proved.

Finally notice that $\delta'\in\{\alpha_1,\ldots,\alpha_r\}$ since otherwise
$$
(\tau\coomega_\delta,\omega_{\delta'})=(\tau'\coomega_\delta,\omega_{\delta'})=(\coomega_\delta,\tau'^{-1}\omega_{\delta'})=(\coomega_\delta,\omega_{\delta'}).
$$
So $\{\delta,\delta'\}\subseteq\{\alpha_1,\ldots,\alpha_r\}$ and this shows that $[\delta,\delta']\subseteq\{\alpha_1,\ldots,\alpha_r\}$ since this last set is connected. So $\coomega_\delta-\tau\coomega_\delta\geq\coalpha_1+\coalpha_2+\cdots+\coalpha_r\geq\sum_{\epsilon\in[\delta,\delta']}\coepsilon$.
\end{proof}

If $\Phi$ is not of type $\typeA$ then there exists a simple root, that we denote by $\nu$, such that $\theta$ is a positive multiple of $\omega_\nu$.

\begin{proposition}\label{proposition_testMinimal}
Suppose that $\Phi$ is not of type $\typeA$, let $\gamma$ be a $(\Delta\setminus\{\alpha\})$--dominant proper minimal element of $M(F)$ and write $\gamma=\sum_{\epsilon\in\Delta}c_\epsilon\omega_\epsilon$. Then the element $\delta\in\Psi_\alpha$ provided by Proposition \ref{proposition_criterionMinimal} for $\beta=\theta$ verifies
\begin{equation}\label{equation_testMin}
(\nabla_{\alpha,\delta},\gamma)+\frac{1}{m_\delta}\sum_{\epsilon\in[\delta,\nu]}c_\epsilon<D_{l}(\delta).
\end{equation}
\end{proposition}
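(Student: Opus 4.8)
The plan is to run Proposition \ref{proposition_criterionMinimal} at the long root $\beta=\theta\in V$ and then transport the resulting inequality for $\nabla_{\alpha,\delta}^\tau$ back to $\nabla_{\alpha,\delta}$. Since $\gamma$ is proper and minimal, the ``only if'' part of Proposition \ref{proposition_criterionMinimal} applied with $\beta=\theta$ produces $\delta\in\Psi_\alpha$ and $\tau\in W_{\Delta\setminus\{\alpha\}}$ with $0<(\nabla_{\alpha,\delta}^\tau,\gamma)<(\nabla_{\alpha,\delta}^\tau,\theta)$; this $\delta$ is the one in the statement. Exactly as in the proof of Proposition \ref{proposition_criterionMinimal}, $(\nabla_{\alpha,\delta}^\tau,\theta)=(\nabla_{\alpha,\delta},\tau^{-1}\theta)\le D_l(\delta)$ because $\tau$ fixes $\coomega_\alpha$ and stabilizes $V\cap\Phi_l\ni\theta$. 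Hence it is enough to prove
$$(\nabla_{\alpha,\delta},\gamma)+\frac{1}{m_\delta}\sum_{\epsilon\in[\delta,\nu]}c_\epsilon\le(\nabla_{\alpha,\delta}^\tau,\gamma),$$
and since $(\nabla_{\alpha,\delta}^\tau,\gamma)-(\nabla_{\alpha,\delta},\gamma)=\frac{1}{m_\delta}(\coomega_\delta-\tau\coomega_\delta,\gamma)$, the whole statement reduces to the estimate $(\coomega_\delta-\tau\coomega_\delta,\gamma)\ge\sum_{\epsilon\in[\delta,\nu]}c_\epsilon$.

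Next I would check that Lemma \ref{lemma_chain} applies with $\delta'=\nu$. From $(\nabla_{\alpha,\delta}^\tau,\gamma)>0$ we get $(\nabla_{\alpha,\delta}^\tau,\theta)>0$; writing this out using $\tau\coomega_\alpha=\coomega_\alpha$, $(\coomega_\alpha,\theta)=m_\alpha$ and $(\coomega_\delta,\theta)=m_\delta$, it becomes $(\tau\coomega_\delta,\theta)<(\coomega_\delta,\theta)$. Because $\Phi$ is not of type $\typeA$, $\theta$ is a positive multiple of $\omega_\nu$, so dividing by that multiple yields $(\tau\coomega_\delta,\omega_\nu)<(\coomega_\delta,\omega_\nu)$, which is precisely the hypothesis of Lemma \ref{lemma_chain}. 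Therefore $\coomega_\delta-\tau\coomega_\delta\ge\sum_{\epsilon\in[\delta,\nu]}\coepsilon$.

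It then remains to pair this inequality with $\gamma$. Since $\delta\neq\alpha$ (as $\delta\in\Psi_\alpha$) and $\tau\in W_{\Delta\setminus\{\alpha\}}$, the dominant difference $\coomega_\delta-\tau\coomega_\delta$ is a non-negative integral combination of simple coroots, and the coefficient of $\coalpha_\alpha$ in it vanishes because $\tau$ is a product of reflections $s_\epsilon$ with $\epsilon\neq\alpha$. Comparing $\coalpha_\alpha$-coefficients in the inequality of Lemma \ref{lemma_chain} forces $\alpha\notin[\delta,\nu]$, so $\coomega_\delta-\tau\coomega_\delta-\sum_{\epsilon\in[\delta,\nu]}\coepsilon$ is again a non-negative combination of the $\coalpha_\epsilon$ with $\epsilon\neq\alpha$; pairing it with $\gamma=\sum_\epsilon c_\epsilon\omega_\epsilon$, whose coefficients $c_\epsilon=(\coalpha_\epsilon,\gamma)$ are $\ge0$ for $\epsilon\neq\alpha$ by $(\Delta\setminus\{\alpha\})$-dominance, gives $(\coomega_\delta-\tau\coomega_\delta,\gamma)\ge\sum_{\epsilon\in[\delta,\nu]}c_\epsilon$, which is what was needed. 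The one genuinely delicate point is exactly this last sign bookkeeping: one must rule out a negative contribution of $c_\alpha$, and it is precisely the combination of $\tau\in W_{\Delta\setminus\{\alpha\}}$ with the position of $\nu$ relative to $\alpha$ (through Lemma \ref{lemma_chain}) that guarantees $\coalpha_\alpha$ never appears.
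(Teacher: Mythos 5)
Your proof is correct and follows essentially the same route as the paper: apply Proposition \ref{proposition_criterionMinimal} at $\beta=\theta$, deduce $(\tau\coomega_\delta,\theta)<(\coomega_\delta,\theta)$, invoke Lemma \ref{lemma_chain} with $\delta'=\nu$, and pair the resulting coroot inequality with $\gamma$. The only difference is that you make explicit the sign bookkeeping for $c_\alpha$ (showing $\alpha\notin[\delta,\nu]$ and that the excess term contains no $\coalpha$), a point the paper's proof leaves implicit.
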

\begin{proof} Let $\tau\in W_{\Delta\setminus\{\alpha\}}$ be the element provided by Proposition \ref{proposition_criterionMinimal} for $\beta=\theta$; so we have $0<(\nabla_{\alpha,\delta}^\tau,\gamma)<(\nabla_{\alpha,\delta}^\tau,\theta)=1-(\frac{\tau\coomega_\delta}{m_\delta},\theta)$. In particular we must have $(\tau\coomega_\delta,\theta)<m_\delta=(\coomega_\delta,\theta)$ and we may apply Lemma \ref{lemma_chain} with $\delta'=\nu$ (since $\theta=k\omega_\nu$ with $k\in\Q^+$) and conclude that $\coomega_\delta - \tau\coomega_\delta\geq\sum_{\epsilon\in[\delta,\nu]}\coepsilon$. Hence
$$
\begin{array}{rcl}
(\nabla_{\alpha,\delta}^\tau,\gamma) & = & (\frac{\coomega_\alpha}{m_\alpha},\gamma)-(\frac{\tau\coomega_\delta}{m_\delta},\gamma)\\
 & \geq & (\frac{\coomega_\alpha}{m_\alpha},\gamma)-(\frac{\coomega_\delta}{m_\delta},\gamma)+\sum_{\epsilon\in[\delta,\nu]}(\frac{\coepsilon}{m_\delta},\gamma)\\
 & = & (\nabla_{\alpha,\delta},\gamma)+\frac{1}{m_\delta}\sum_{\epsilon\in[\delta,\nu]}c_\epsilon
\end{array}
$$
and our claim follows since, as seen in the end of the proof of Proposition \ref{proposition_criterionMinimal}, $(\Delta_{\alpha,\delta}^\tau,\theta)\leq D_l(\delta)$.
\end{proof}

A similar result holds also for $D_s$ but we will not need it.

For proper elements that are invariant under the stabilizer of $F$ we have a very simple criterion of minimality.

\begin{proposition}\label{proposition_invariantMinimal}
An element $\gamma\in M(F)$ that is $W_{\Delta\setminus\{\alpha\}}$--invariant is proper minimal if and only if there exists $\delta_s,\delta_l\in\Psi_\alpha$ such that $0<(\nabla_{\alpha,\delta_s},\gamma)<D_s(\delta_s)$ and $0<(\nabla_{\alpha,\delta_l},\gamma)<D_l(\delta_l)$.
\end{proposition}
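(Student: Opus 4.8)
The plan is to feed $\gamma$ into the criteria of Proposition \ref{proposition_criterionProper} and Proposition \ref{proposition_criterionMinimal}, after first collecting the simplifications that $W_{\Delta\setminus\{\alpha\}}$-invariance forces. Two elementary remarks do the work. First, the fixed subspace of $W_{\Delta\setminus\{\alpha\}}$ in $E$ equals $\bigcap_{\delta\in\Delta\setminus\{\alpha\}}\delta^\perp$, which is one-dimensional and contains $\coomega_\alpha$; hence a $W_{\Delta\setminus\{\alpha\}}$-invariant $\gamma\in M(F)$ is a non-negative multiple of $\coomega_\alpha$, equivalently (pairing with $\coomega_\alpha$ and using $(\coomega_\alpha,\beta)=m_\alpha$ for all $\beta\in V$) a non-negative multiple of the barycenter $b(V_l(\alpha))$, which moreover coincides with $b(V_s(\alpha))$ whenever the latter is defined. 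Second, for any simple root $\delta$ and any $\tau\in W_{\Delta\setminus\{\alpha\}}$ we have $(\nabla_{\alpha,\delta}^\tau,\gamma)=(\tfrac{\coomega_\alpha}{m_\alpha},\gamma)-(\tfrac{\coomega_\delta}{m_\delta},\tau^{-1}\gamma)=(\nabla_{\alpha,\delta},\gamma)$, so the twist by $\tau$ is invisible on $\gamma$, whereas on a root $\beta\in V$ the vectors $\tau^{-1}\beta$ run over the whole $W_{\Delta\setminus\{\alpha\}}$-orbit of $\beta$, which is $V_l(\alpha)$ or $V_s(\alpha)$ by Lemma \ref{lemma_facetOrbits}.

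For the forward implication, suppose $\gamma$ is proper minimal and apply Proposition \ref{proposition_criterionMinimal} to the long root $\beta=\theta\in V$: there are $\delta_l\in\Psi_\alpha$ and $\tau\in W_{\Delta\setminus\{\alpha\}}$ with $0<(\nabla_{\alpha,\delta_l}^\tau,\gamma)<(\nabla_{\alpha,\delta_l}^\tau,\theta)$. By the second remark the left-hand quantity is $(\nabla_{\alpha,\delta_l},\gamma)$, and $(\nabla_{\alpha,\delta_l}^\tau,\theta)=(\nabla_{\alpha,\delta_l},\tau^{-1}\theta)\leq D_l(\delta_l)$ because $\tau^{-1}\theta\in V_l(\alpha)$; thus $0<(\nabla_{\alpha,\delta_l},\gamma)<D_l(\delta_l)$. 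If $V_s(\alpha)\neq\varnothing$ the same computation with $\beta=\theta_s$ yields $\delta_s$ with $0<(\nabla_{\alpha,\delta_s},\gamma)<D_s(\delta_s)$; if $V_s(\alpha)=\varnothing$ then $D_s\equiv+\infty$ and $\delta_s:=\delta_l$ does the job.

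For the converse, assume such $\delta_l,\delta_s$ exist. Then $\gamma\neq0$, so by the first remark $\gamma=r\,b(V_l(\alpha))$ with $r>0$. To see $\gamma$ is proper, I verify the condition of Proposition \ref{proposition_criterionProper}: for $\delta\in\Psi_\alpha$ the quantity $(\nabla_{\alpha,\delta},\gamma)=\tfrac{r}{|V_l(\alpha)|}\sum_{\beta\in V_l(\alpha)}(\nabla_{\alpha,\delta},\beta)$ has only non-negative summands (a root satisfies $(\coomega_\delta,\cdot)\leq m_\delta$), and the summand for $\beta=\beta_{\{\alpha\}}$ is strictly positive: $\beta_{\{\alpha\}}$ is long by Remark \ref{remark_betaA}, hence lies in $V_l(\alpha)$, and since $\alpha$ is maximal $\overline{\{\alpha\}}=\{\alpha\}$, so $(\coomega_\delta,\beta_{\{\alpha\}})<m_\delta$ and $(\nabla_{\alpha,\delta},\beta_{\{\alpha\}})>0$; hence $(\nabla_{\alpha,\delta},\gamma)>0$. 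To verify minimality via Proposition \ref{proposition_criterionMinimal}, fix $\beta\in V$. If $\beta\in V_l(\alpha)$, then since $V_l(\alpha)$ is a single $W_{\Delta\setminus\{\alpha\}}$-orbit we may pick $\tau\in W_{\Delta\setminus\{\alpha\}}$ with $(\nabla_{\alpha,\delta_l},\tau^{-1}\beta)=D_l(\delta_l)$; by the hypothesis and the second remark $0<(\nabla_{\alpha,\delta_l}^\tau,\gamma)=(\nabla_{\alpha,\delta_l},\gamma)<D_l(\delta_l)=(\nabla_{\alpha,\delta_l}^\tau,\beta)$, which is precisely the inequality Proposition \ref{proposition_criterionMinimal} demands for $\beta$. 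The case $\beta\in V_s(\alpha)$ is handled identically with $\delta_s$ in place of $\delta_l$.

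The proof is short once the first paragraph is in place, and that reduction — recognizing that $W_{\Delta\setminus\{\alpha\}}$-invariance pins $\gamma$ to the line through $\coomega_\alpha$ and annihilates the $\tau$-dependence, so that the two criteria collapse onto the single pair of inequalities of the statement — is the only step needing an idea rather than a computation. What remains is bookkeeping: one checks $\Psi_\alpha\neq\varnothing$ (a facet always has adjacent facets, Proposition \ref{proposition_adjacentFacet}) and keeps careful track of the convention $D_s(\delta)=+\infty$ when $V(\alpha)\cap\Phi_s=\varnothing$, so that the short-root clause of the statement stays vacuously available in the simply laced case and, more generally, whenever $\theta_s\notin V(\alpha)$.
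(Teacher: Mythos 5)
Your proof is correct and follows essentially the same route as the paper, whose entire argument is the observation that $(\nabla_{\alpha,\delta}^\tau,\gamma)=(\nabla_{\alpha,\delta},\gamma)$ for invariant $\gamma$, after which Proposition \ref{proposition_criterionMinimal} gives the claim. You additionally spell out details the paper leaves implicit — that $V_l(\alpha)$ and $V_s(\alpha)$ are single orbits realizing $D_l$ and $D_s$, and the barycenter argument verifying properness via Proposition \ref{proposition_criterionProper} — all of which check out.
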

\begin{proof} For a $W_{\Delta\setminus\{\alpha\}}$--invariant element $\gamma$ we have $(\nabla_{\alpha,\delta}^\tau,\gamma)=(\nabla_{\alpha,\delta},\gamma)$ for any $\tau\in W_{\Delta\setminus\{\alpha\}}$. Hence the claim follows at once by Proposition \ref{proposition_criterionMinimal}.
\end{proof}

We are now ready to see the proof of Theorem \ref{introTheorem_properMinimalElements}. Although we have seen in this section some general conditions for proper minimal elements, we still need a case by case analysis. This combinatorial problem has some similarity with the classification of low triples in \cite{CM_projectiveNormality}. In Table \ref{table_rootData} we have some simple numerical data we will use in this analysis; we will tacitly use these data without mention the reference to the table.

The data in the table are: in the first column the irreducible type, in the second column the maximal roots, the third column gives the coefficient $m_\alpha$ for $\alpha$ a maximal root, the fourth the values of $\nabla_{\alpha,\delta}$ on $V_l(\alpha)$ and $V_s(\alpha)$ for any $\delta\neq\alpha$ maximal, the fifth, for the facets with autointersection, the value of $\nabla_{\alpha,\epsilon}$ on $V_l(\alpha)$ and $V_s(\alpha)$ where $\epsilon$ is the unique simple root adjacent to $\alpha$ and, finally, in the last column we have the proper minimal elements of the facet $F(\alpha)$ (these are computed in the proof). Notice that the fourth and fifth columns gives the value of $\nabla_{\alpha,\delta}$ for any $\delta\in\Psi_\alpha$; we have divided them in two columns for clarity. These values may be easily computed using Lemma \ref{lemma_facetOrbits}.

\begin{proof}[Proof of Theorem \ref{introTheorem_properMinimalElements}.] We assume in what follows that $\gamma=\sum_{\epsilon}c_\epsilon\omega_\epsilon$ is a $(\Delta\setminus\{\alpha\})$--dominant proper minimal element of $F\doteq F(\alpha)$, $\alpha$ a maximal root. So $c_\epsilon\geq0$ for all $\epsilon\in\Delta$, $\epsilon\neq\alpha$. It will turn out that any proper minimal element is $W_{\Delta\setminus\{\alpha\}}$--invariant, hence we compute all proper minimal elements and not only the dominant ones.

$\bullet$ Type $\typeA_\ell$. All simple roots are maximal. Let $\alpha\doteq\alpha_i$, $i=1,2,\ldots,\ell$; then $\Psi_\alpha$ is the set of simple roots adjacent to $\alpha_i$ in $\Delta$ and we have $\lcm(m_\alpha,m_\delta)=1$ for any $\delta\in\Psi_\alpha$. But notice that $D_l(\delta)=1$ too, so the necessary condition (\ref{equation_criMinL}) in Proposition \ref{proposition_criterionMinimal} cannot be fulfilled.

$\bullet$ Type $\typeB_\ell$, $\ell\geq3$. The two maximal roots are $\alpha_1$, $\alpha_\ell$. Let $\alpha\doteq\alpha_1$, then $\Psi_\alpha=\{\alpha_\ell\}$ and $m_1=1$, $m_\ell=2$. But $1/\lcm(m_1,m_\ell)=1/2=D_s(\alpha_\ell)$ and so the condition (\ref{equation_criMinS}) in Proposition \ref{proposition_criterionMinimal} is violated. We conclude that the face $F(\alpha_1)$ has no proper minimal element.

Now let $\alpha\doteq\alpha_\ell$. We have $\Psi_\alpha=\{\alpha_1,\alpha_{\ell-1}\}$ and $m_1=1$, $m_{\ell-1}=2$. Notice that $1/\lcm(m_\ell,m_{\ell-1})=1/2=D_{l}(\alpha_{\ell-1})$ hence the necessary condition (\ref{equation_criMinL}) of Proposition \ref{proposition_criterionMinimal} should be satisfied for $\delta=\alpha_1$. So we have $(\nabla_{\alpha_\ell,\alpha_1},\gamma)+c_1+c_2<1=D_{l}(\alpha_1)$, by Proposition \ref{proposition_testMinimal}, and so $c_1=c_2=0$. Moreover
$$
\nabla_{\alpha_\ell,\alpha_1}=\frac{\coomega_\ell}{2}-\coomega_1=\sum_{i=1}^{\ell-1}\frac{i-2}{2}\coalpha_{i}+\frac{\ell-2}{4}\coalpha_\ell
$$
hence
$$
(\nabla_{\alpha_\ell,\alpha_1},\gamma)=\sum_{i\geq3}^{\ell-1}\frac{i-2}{2}c_i+\frac{\ell-2}{4}c_\ell<1.
$$
Since $\gamma\in R$ we find that $c_\ell$ is even. Moreover, by a direct computation, $\nabla_{\alpha_\ell,\alpha_{\ell-1}}=\coomega_\ell/4$, hence $(\nabla_{\alpha,\alpha_{\ell-1}},\gamma)=c_\ell/4$ and, by Proposition \ref{proposition_criterionProper}, we have $0<c_\ell/4$; so $c_\ell$ is also positive.

Now this implies $\ell=3$ since for $\ell\geq4$ we have $(\ell-2)c_\ell/4\geq1$. Hence we find $c_\ell<4$ and so $c_\ell=2$. But now it is clear that $\gamma=2\omega_3$ is proper minimal since it is invariant by $W_{\Delta\setminus\{\alpha_3\}}$ and $\delta_s=\delta_l=\alpha_1$ fulfils the condition in Proposition \ref{proposition_invariantMinimal}.

In particular we conclude that the facet $F(\alpha_\ell)$ has no proper minimal element for $\ell\geq4$.

$\bullet$ Type $\typeC_\ell$. The unique maximal root is $\alpha_\ell$ and $\Psi_{\alpha_\ell}=\{\alpha_{\ell-1}\}$. Moreover $m_\ell=1$ and $m_{\ell-1}=2$. So, by condition (\ref{equation_criMinS}) of Proposition \ref{proposition_criterionMinimal}, there are no proper minimal element since $1/\lcm(m_\ell,m_{\ell-1})=1/2=D_s(\alpha_{\ell-1})$.

$\bullet$ Type $\typeD_\ell$. The maximal roots are $\alpha_1,\alpha_{\ell-1},\alpha_\ell$ and $m_\alpha=1$ for all such roots. Also these facets have no proper minimal element since $1/\lcm(m_\alpha,m_\delta)=1=D_{l}(\delta)$ for all maximal root $\delta\neq\alpha$ maximal and no facet has autointersection.

$\bullet$ Type $\typeE_6$. The maximal roots are $\alpha_1$ and $\alpha_6$; since they are symmetric we consider only $\alpha\doteq\alpha_1$. We have $\Psi_\alpha=\{\alpha_3,\alpha_6\}$. Since $m_1=m_6=1$ we find $1/\lcm(m_1,m_6)=1=D_{l}(\alpha_6)$; so the necessary condition (\ref{equation_criMinL}) of Proposition \ref{proposition_criterionMinimal} should be satisfied for $\delta=\alpha_3$ and $(\nabla_{\alpha_1,\alpha_3},\gamma)+(c_2+c_3+c_4)/2<1$ by Proposition \ref{proposition_testMinimal}. But $(\nabla_{\alpha_1,\alpha_3},\gamma)=c_1/2>0$ and we find $c_1=1$, $c_2=c_3=c_4=0$. Moreover, by Proposition \ref{proposition_criterionProper}, $0<(\nabla_{\alpha_1,\alpha_6},\gamma)=2(1-c_6)/3-c_5/3$ since
$$
\nabla_{\alpha_1,\alpha_6}=\coomega_1-\coomega_6=\frac{2}{3}(\coalpha_1-\coalpha_6)+\frac{1}{3}(\coalpha_3-\coalpha_5).
$$
This shows that $c_6=0$ and $c_5=0$ or $c_5=1$. We conclude either $\gamma=\omega_1$ or $\gamma=\omega_1+\omega_5$; but this is impossible since these are not elements of $R$. Hence there are no proper minimal elements for any facet for this type.

$\bullet$ Type $\typeE_7$. The maximal roots are $\alpha_2$ and $\alpha_7$. We consider first $F(\alpha_2)$. We have $\Psi_{\alpha_2}=\{\alpha_7\}$ and $m_2=2$, $m_7=1$. By Proposition \ref{proposition_testMinimal} $(\nabla_{\alpha_2,\alpha_7},\gamma)+c_1+c_3+c_4+c_5+c_6+c_7<1$, hence $c_1=c_3=c_4=c_5=c_6=c_7=0$. Further
$$
\nabla_{\alpha_2,\alpha_7}=\frac{1}{2}\coomega_2-\coomega_7=\frac{1}{4}(\coalpha_2-\coalpha_5-2\coalpha_6-3\coalpha_7)
$$
and so $c_2<4$. Moreover $0<(\nabla_{\alpha_2,\alpha_7},\gamma)=c_2/4$ and finally $\gamma=2\omega_2$ using the fact that $\gamma\in R$. This element is minimal by Proposition \ref{proposition_invariantMinimal} with $\delta_l=\alpha_7$.

Now consider $F(\alpha_7)$. We have $\Psi_{\alpha_7}=\{\alpha_2,\alpha_6\}$, $m_7=1$, $m_2=2$, $m_6=2$. We will prove that the condition (\ref{equation_testMin}) of Proposition \ref{proposition_testMinimal} does not hold for $\delta=\alpha_2$ nor $\delta=\alpha_6$. We begin considering the case $\delta=\alpha_2$.

Indeed in this case we have
$$
\nabla_{\alpha_7,\alpha_2}=\coomega_7-\frac{1}{2}\coomega_2=\frac{3}{4}\coalpha_7+\frac{1}{4}\coalpha_5-\frac{1}{4}\coalpha_2,
$$
hence, by Proposition \ref{proposition_criterionMinimal} we should have $(\nabla_{\alpha_7,\alpha_2},\gamma)+(c_1+c_2+c_3+c_4)/2=c_1/2+c_2/4+c_3/2+c_4/2+c_5/4+3c_7/4<1$. But $0<(\nabla_{\alpha_7,\alpha_6},\gamma)=(\coomega_7-\coomega_6/2,\gamma)=(\coalpha_7,\gamma)/2=c_7/2$. Hence $c_7=1$, $c_1=c_2=c_3=c_4=c_5=0$ and $\gamma=c_6\omega_6+\omega_7$. However no such $\gamma\in R$ and so we have proved that the condition (\ref{equation_criMinL}) in Proposition \ref{proposition_criterionMinimal} does not hold for $\delta=\alpha_2$.

Now suppose $\delta=\alpha_6$ in Proposition \ref{proposition_criterionMinimal} and let $\tau$ be an element in $W_{\Delta\setminus\{\alpha_7\}}$ for which the condition of the proposition is fulfilled. As we have just seen $(\nabla_{\alpha_7,\alpha_6},\gamma)=c_7/2$, hence
$$
0<\frac{c_7}{2}=(\nabla_{\alpha_7,\alpha_6},\gamma)\leq(\nabla_{\alpha_7,\alpha_6}^\tau,\gamma)<(\nabla_{\alpha_7,\alpha_6}^\tau,\theta)=1-(\frac{\coomega_6}{2},\tau^{-1}\theta)
$$
using Lemma \ref{lemma_dominantInequality} for the second inequality from the left. Hence $c_7=1$ and $(\coomega_6,\tau^{-1}\theta)=0$.

Notice that $\tau^{-1}\theta\in V(\alpha_7)$ since $\tau\in W_{\Delta\setminus\{\alpha_7\}}$; so $\alpha_7$ is in the support of the root $\tau^{-1}\theta$ while $\alpha_6$ is not in this support since $(\coomega_6,\tau^{-1}\theta)=0$. But the support of a root is a connected subset of $\Delta$, so we find $\tau^{-1}\theta=\alpha_7=-\coomega_6+2\coomega_7$.

Since $\theta=\coomega_1$ we have $\tau\coomega_6=-\coomega_1+2\coomega_7=\coomega_6-(2\coalpha_1+2\coalpha_2+3\coalpha_3+4\coalpha_4+3\coalpha_5+2\coalpha_6)$ (see the table for $\typeE_7$ in \cite{bourbaki}, for example). We find
$$
\begin{array}{rcl}
(\nabla_{\alpha_7,\alpha_6}^\tau,\gamma) & = & (\coomega_7,\gamma)-(\frac{\tau\coomega_6}{2},\gamma)\\
 & = & (\coomega_7,\gamma)-(\frac{\coomega_6}{2},\gamma)+(\frac{\coomega_6-\tau\coomega_6}{2},\gamma)\\
 & = & (\nabla_{\alpha_7,\alpha_6},\gamma)+\frac{1}{2}(2\coalpha_1+2\coalpha_2+3\coalpha_3+4\coalpha_4+3\coalpha_5+2\coalpha_6,\gamma)\\
 & = & \frac{c_7}{2}+c_1+c_2+\frac{3}{2}c_3+2c_4+\frac{3}{2}c_5+c_6.
\end{array}
$$
Now, using $c_7=1$ and $(\nabla_{\alpha_7,\alpha_6}^\tau,\theta)=1$ proved above, the condition of Proposition \ref{proposition_criterionMinimal} becomes
$$
\frac{1}{2}+c_1+c_2+\frac{3}{2}c_3+2c_4+\frac{3}{2}c_5+c_6<1.
$$
Hence $c_1=c_2=c_3=c_4=c_5=c_6=0$. So $\gamma=\coomega_7$, but $\coomega_7\not\in R$ and this shows that we cannot have $\delta=\alpha_6$ in Proposition \ref{proposition_criterionMinimal} either.

We conclude that the facet $F(\alpha_7)$ has no proper minimal element.

$\bullet$ Type $\typeE_8$. There are two maximal roots: $\alpha_1$ and $\alpha_2$. We begin with $F(\alpha_1)$ showing that this facet has no proper minimal element.

We have $\Phi_{\alpha_1}=\{\alpha_2,\alpha_3\}$. Since $1/\lcm(m_1,m_3)=1/\lcm(2,4)=1/4=D_l(\alpha_3)$, we cannot have $\delta=\alpha_3$ by condition (\ref{equation_criMinL}) in Proposition \ref{proposition_criterionMinimal}. For $\delta=\alpha_2$, by Proposition \ref{proposition_testMinimal}, $(\nabla_{\alpha_1,\alpha_2},\gamma)+(c_2+c_4+c_5+c_6+c_7+c_8)/3<1/3=D_l(\alpha_2)$ since $m_2=3$. So $c_2=c_4=c_5=c_6=c_7=c_8=0$, moreover
$$
\nabla_{\alpha_1,\alpha_2}=\frac{1}{2}\coomega_1-\frac{1}{3}\coomega_2=\frac{1}{3}\coalpha_1-\frac{1}{6}\coalpha_2+\frac{1}{6}\coalpha_3
$$
and so $c_1/3+c_3/6<1/3$. But $0<(\nabla_{\alpha_1,\alpha_2},\gamma)=c_1/4$ which implies $c_1>0$ that is incompatible with the previous inequality. This proves our claim about $F(\alpha_1)$.

Now we consider $F(\alpha_2)$; here $\Psi_{\alpha_2}=\{\alpha_1\}$. The condition in Proposition \ref{proposition_testMinimal} is
$$
(\nabla_{\alpha_2,\alpha_1},\gamma)+(c_1+c_3+c_4+c_5+c_6+c_7+c_8)/2<1/2
$$
hence $c_1=c_3=c_4=c_5=c_6=c_7=c_8=0$ and we have $\gamma=c_2\omega_2$. We find $0<(\nabla_{\alpha_2,\alpha_1},\gamma)=c_2/6<1/2$ by which $c_2=1$ or $c_2=2$. Since $\omega_2\in R$ and it is invariant by $W_{\Delta\setminus\{\alpha_1\}}$, $\omega_2$ and $2\omega_2$ are minimal element by Proposition \ref{proposition_invariantMinimal} with $\delta_l=\alpha_1$.

$\bullet$ Type $\typeF_4$. The unique maximal root is $\alpha_4$ with $\Psi_{\alpha_4}=\{\alpha_3\}$ but $1/\lcm(m_4,m_3)=1/\lcm(2,4)=1/4=D_s(\alpha_3)$, and hence condition (\ref{equation_criMinS}) in Proposition \ref{proposition_criterionMinimal} cannot be fulfilled. So $M(F(\alpha_4))$ has no proper minimal element.

$\bullet$ Type $\typeG_2$. Only $\alpha_1$ is maximal and $\delta=\alpha_2$ in Proposition \ref{proposition_testMinimal}. We have $m_1=3$, $m_2=2$ and so the condition is $0<(\nabla_{\alpha_1,\alpha_2},\gamma)+c_2/2<1/2=D_l(\alpha_2)$. We find $c_2=0$, hence $0<(\nabla_{\alpha_1,\alpha_2},\gamma)=c_1/6<1/2$. We conclude that the proper minimal elements are $\omega_1$, $2\omega_1$ by Proposition \ref{proposition_invariantMinimal}.
\end{proof}

If $F$ is a face of the root polytope, let us define $\mi(F)$ as the set of non zero $\leq_F$--minimal elements of $M(F)$ with respect to the order defined in Section \ref{section_preliminary} and define $\prmi(F)\subseteq\mi(F)$ as the set of proper minimal elements of $M(F)$.

\begin{corollary}\label{corollary_minimalOutside}
For any face $F$ of the root polytope $\mi(F)\cap Z(V(F))=\varnothing$.
\end{corollary}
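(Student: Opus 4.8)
The plan is to suppose, for a contradiction, that some $\gamma$ lies in $\mi(F)\cap Z(V(F))$ and to argue by induction on the rank of $\Phi$. By Lemma~\ref{lemma_reducible}(iii), and since $V(F)=V(F_1)\sqcup V(F_2)$ forces $Z(V(F))=Z(V(F_1))\oplus Z(V(F_2))$, such a $\gamma$ for a reducible $\Phi$ would produce one for an irreducible factor; so we may assume $\Phi$ irreducible. If $F$ is not a facet of $\RP_\Phi$ (in particular if $\Phi$ has rank $1$), then $E'\doteq\langle F\rangle_\R$ is a proper subspace and, by Lemma~\ref{lemma_subface}, $F$ is a facet of $\RP_{\Phi'}$ for the irreducible system $\Phi'\doteq\Phi\cap E'$ of strictly smaller rank; as $V(F)\subseteq\Phi'$ we have $\gamma\in Z(V(F))\subseteq R_{\Phi'}$, hence $\gamma\in C(V(F))\cap R_{\Phi'}=M_{\Phi'}(F)$, and $\gamma$ is still $\leq_F$--minimal there because $M_{\Phi'}(F)\subseteq M(F)$ and $N_{\Phi'}(F)=N(F)$; this contradicts the inductive hypothesis for $\Phi'$. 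So we may assume that $F$ is a facet of $\RP_\Phi$, and, since $\mi$ and $Z(V(\cdot))$ are equivariant for the action of $W$, that $F=F(\alpha)$ with $\alpha$ a maximal root.

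Next I would split according to whether $\gamma$ is proper. Suppose it is not: by definition $\gamma\in M(F'')$ for some face $F''$ in the border of $F(\alpha)$, and, since $V(F'')\subseteq V(\alpha)$ gives $N(F'')\subseteq N(F(\alpha))$ and $M(F'')\subseteq M(F(\alpha))$, the element $\gamma$ is a non--zero $\leq_{F''}$--minimal element of $M(F'')$. Now $\dim F''<\dim F(\alpha)$, so $\langle F''\rangle_\R$ is a proper subspace and $F''$ is a facet of $\RP_{\Phi''}$ for the irreducible system $\Phi''\doteq\Phi\cap\langle F''\rangle_\R$ of smaller rank. By Lemma~\ref{lemma_subfaceLattice} we get $\gamma\in Z(V(\alpha))\cap\langle F''\rangle_\R=Z(V(F''))$, and since $Z(V(F''))=\langle V(F'')\rangle_\Z\subseteq R_{\Phi''}$ this puts $\gamma$ in $C(V(F''))\cap R_{\Phi''}=M_{\Phi''}(F'')$, where it is still $\leq_{F''}$--minimal; thus $\gamma\in\mi_{\Phi''}(F'')\cap Z_{\Phi''}(V(F''))$, contradicting the inductive hypothesis for $\Phi''$.

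Hence $\gamma$ must be proper, i.e. $\gamma\in\prmi(F(\alpha))$. The roots of $V(\alpha)$ lie in $N(F(\alpha))$ and so are not $\leq_{F(\alpha)}$--minimal, so the proof of Theorem~\ref{introTheorem_properMinimalElements} shows that $\gamma$ must be one of $2\omega_3$ (type $\typeB_3$, $\alpha=\alpha_3$), $2\omega_2$ (type $\typeE_7$, $\alpha=\alpha_2$), $\omega_2$ or $2\omega_2$ (type $\typeE_8$, $\alpha=\alpha_2$), or $\omega_1$ or $2\omega_1$ (type $\typeG_2$, $\alpha=\alpha_1$). By Lemma~\ref{lemma_faceLattice}, and because the coefficient of $\alpha$ in $\beta_\alpha$ equals $(\coomega_\alpha,\beta_\alpha)=m_\alpha$, we have
$$
Z(V(\alpha))=\langle\Delta\setminus\{\alpha\},\beta_\alpha\rangle_\Z=\langle\Delta\setminus\{\alpha\},\,m_\alpha\alpha\rangle_\Z ,
$$
so an element of $R$ lies in $Z(V(\alpha))$ precisely when its coefficient on $\alpha$ is divisible by $m_\alpha$. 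Reading off the tables of \cite{bourbaki}, these coefficients are $3$ (for $2\omega_3$), $7$ (for $2\omega_2$ in $\typeE_7$), $8$ and $16$ (for $\omega_2$ and $2\omega_2$ in $\typeE_8$), $2$ and $4$ (for $\omega_1$ and $2\omega_1$), while the corresponding $m_\alpha$ are $2$, $2$, $3$ and $3$ — none of which divides the relevant coefficient. This contradicts $\gamma\in Z(V(\alpha))$ and completes the induction.

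The only genuine difficulty is this last step: there appears to be no type--free explanation, so the conclusion rests on the case analysis already performed for Theorem~\ref{introTheorem_properMinimalElements} together with the short divisibility check above; everything else is formal bookkeeping with the order $\leq_F$, with the lattices $Z(V(\cdot))$, and with the descent to subsystems furnished by Lemmas~\ref{lemma_subface}, \ref{lemma_subfaceLattice} and \ref{lemma_faceLattice}.
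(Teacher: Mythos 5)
Your proof is correct and follows essentially the same route as the paper: both arguments reduce to the proper minimal elements of a coordinate facet (you via induction on the rank and Lemma~\ref{lemma_subfaceLattice}, the paper via the identity $\mi(F)=\bigcup_{F'}\prmi(F')$ and the same lemma) and then conclude with the identical divisibility check, namely that $m_\alpha$ does not divide $(\coomega_\alpha,\gamma)$ for the four families of proper minimal elements classified in Theorem~\ref{introTheorem_properMinimalElements}.
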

\begin{proof}
First we show that $\prmi(F)\cap Z(V(F))=\varnothing$.

The face $F$ is a facet of the root polytope for some irreducible subsystem of $\Phi$ by (1) of Corollary 4.5 in \cite{celliniMarietti} and Lemma \ref{lemma_subfaceSubsystem}. Further, by the Weyl group action, we may assume also that $F$ is a coordinate facet, so $F\doteq F(\alpha)$ for a maximal root $\alpha\in\Delta$. So it suffices to prove our claim for the proper minimal elements in the Table \ref{table_rootData}. For each such an element $\gamma$ we check that $m_\alpha$ does not divide $(\coomega_\alpha,\gamma)$; this is clearly sufficient to conclude that $\gamma\not\in Z(V(\alpha))$.

For $\typeB_3$, $\alpha\doteq\alpha_3$ and $(\coomega_3,2\omega_3)=3$ and our claim is true since $m_3=2$. For $\typeE_7$, $\alpha\doteq\alpha_2$ and $(\coomega_2,2\omega_2)=7$ while $m_2=2$. For $\typeE_8$, $\alpha\doteq\alpha_2$ and $(\coomega_2,\omega_2)=8$ while $m_2=3$, hence $\omega_2$ and $2\omega_2$ are not elements of $Z(V(\alpha))$. Finally for $\typeG_2$, $\alpha\doteq\alpha_1$, we have $(\coomega_1,\omega_1)=2$ and we conclude that $\omega_1$ and $2\omega_1$ are not in $Z(V(\alpha))$.

Now notice that $\mi(F)=\cup\prmi(F')$ where $F'$ runs in the set of subfaces of $F$. So if $\gamma\in\mi(F)\cap Z(V(F))$, there exists $F'$ subface of $F$ such that $\gamma\in\prmi(F')\cap Z(V(F))$. But $\prmi(F')\subset\langle F'\rangle_\R$ and so $\gamma\in Z(V(F'))$ by Lemma \ref{lemma_subfaceLattice}. This finishes the proof since $\prmi(F')\cap Z(V(F'))=\varnothing$ as proved above.
\end{proof}

We are now ready to prove the Corollaries \ref{introCorollary_faceNormality} and \ref{introCorollary_faceIntegrality} in the Introduction.

\begin{proof}[Proof of Corollary \ref{introCorollary_faceNormality}.] Let $F$ be a face of the root polytope; we have to show that $C(V(F))\cap Z(V(F))=N(V(F))$.

It is clear that the set in the right hand side is a subset of the one in the left hand side so let $\gamma\in C(V(F))\cap Z(V(F))\subset C(V(F))\cap R=M(F)$. 

There exists $\gamma_0\in N(V(F))$ and $\gamma_1\in\mi(F)\cup\{0\}$ such that $\gamma=\gamma_0+\gamma_1$. So $\gamma_1=\gamma-\gamma_0\in Z(V(F))$ and we find $\gamma_1=0$ since $\mi(F)\cap Z(V(F))=\varnothing$ by Corollary \ref{corollary_minimalOutside}. Hence $\gamma=\gamma_0\in N(V(F))$.
\end{proof}

\begin{proof}[Proof of Corollary \ref{introCorollary_faceIntegrality}.] Let $F\doteq F(\alpha)$, $\alpha$ a maximal root, denote by $\mathcal{R}$ a system of representatives for the quotient $R/Z(V(F))$ and suppose $0$ represents the class of $Z(V(F))$. Then if we denote by $M_\gamma$ the intersection $(\gamma+Z(V(F)))\cap C(V(F))$ we have $M(F)=R\cap C(V(F))=\cup M_\gamma$ where $\gamma$ runs in $\mathcal{R}$. Now notice that $M_\gamma$ is a non--void set since $C(V(F))$ is a cone of maximal dimension (i.e. it spans $E$ as a vector space).

Further we know that $M_0=Z(V(F))\cap C(V(F))=N(V(F))$ by Corollary \ref{introCorollary_faceNormality}. Since, by definition, the facet $F$ is integrally closed in $R$ if and only if $R\cap C(V(F))=N(V(F))$, we find at once that this is the case if and only if $R=Z(V(F))$. Now by Lemma \ref{lemma_faceLattice} we have $Z(V(F))=\langle\Delta\setminus\{\alpha\},\beta_{\{\alpha\}}\rangle_\Z=\langle\Delta\setminus\{\alpha\},m_\alpha\alpha\rangle_\Z$. So $R=Z(V(F))$ if and only if $m_\alpha=1$ and this is our claim.
\end{proof}

\section{Application to the length map}\label{section_applicationLength}

In this section we prove the main Theorem \ref{introTheorem_formula} of our paper giving the formula for the length map. We begin by checking a particular case of the formula for the minimal elements of a facet.

\begin{lemma}\label{lemma_minimalFormula} If $F$ is a facet of the root polytope and $\gamma\in\mi(F)$ then $|\gamma|\leq\lceil(\lambda_F,\gamma)\rceil$ where $\lambda_F$ is the vector defining $F$.
\end{lemma}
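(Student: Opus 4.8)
The statement to prove is that for a facet $F$ of $\RP_\Phi$ with defining vector $\lambda_F$, every $\gamma\in\mi(F)$ satisfies $|\gamma|\leq\lceil(\lambda_F,\gamma)\rceil$. The basic idea is that $\mi(F)=\bigcup_{F'}\prmi(F')$ where $F'$ runs over the subfaces of $F$ (including $F$ itself), so it suffices to treat a \emph{proper} minimal element $\gamma$ of a facet, after reducing via the Weyl group action to the case $F=F(\alpha)$ for a maximal root $\alpha$, with $\lambda_F=\coomega_\alpha/m_\alpha$. By Theorem~\ref{introTheorem_properMinimalElements} the proper minimal elements not already in $V(F)$ form a very short explicit list: $2\omega_3$ in $\typeB_3$, $2\omega_2$ in $\typeE_7$, $\omega_2$ and $2\omega_2$ in $\typeE_8$, and $\omega_1$, $2\omega_1$ in $\typeG_2$ (and for $\gamma\in V(F)$, $|\gamma|=1=(\lambda_F,\gamma)$ trivially). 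So the whole proof reduces to a handful of explicit checks.

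\textbf{Key steps.} First I would reduce to $F=F(\alpha)$ a coordinate facet using the $W$-action (lengths and the pairing $(\lambda_F,\cdot)$ are $W$-invariant), and dispatch the case $\gamma\in V(F)$. Then, for each $\gamma$ in the list of Theorem~\ref{introTheorem_properMinimalElements}, I would (a) compute $(\lambda_F,\gamma)=(\coomega_\alpha,\gamma)/m_\alpha$ and hence $\lceil(\lambda_F,\gamma)\rceil$, and (b) exhibit an explicit way to write $\gamma$ as a sum of that many roots, thereby bounding $|\gamma|$ from above. For instance in $\typeB_3$ with $\alpha=\alpha_3$, $m_3=2$ and $(\coomega_3,2\omega_3)=3$, so $\lceil(\lambda_F,\gamma)\rceil=2$, and one needs $2\omega_3$ written as a sum of two roots; in $\typeG_2$ with $\alpha=\alpha_1$, $m_1=3$ and $(\coomega_1,\omega_1)=2$, $(\coomega_1,2\omega_1)=4$, giving ceilings $1$ and $2$, so one must check $\omega_1$ is itself a root (it is the short highest root) and $2\omega_1$ is a sum of two roots; similarly $2\omega_2$ in $\typeE_7$ ($m_2=2$, pairing $7$, ceiling $4$), and $\omega_2$, $2\omega_2$ in $\typeE_8$ ($m_2=3$, pairings $8$ and $16$, ceilings $3$ and $6$). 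Each such expression can be read off from the explicit root data (e.g.\ the tables in \cite{bourbaki}) or from the description of $V(\alpha)$ via Lemma~\ref{lemma_facetOrbits}.

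\textbf{Main obstacle.} There is no conceptual difficulty here; the statement is an inequality ($\leq$), so I only need a \emph{witness} partition into roots of the right size, not optimality. The only real work is the bookkeeping: for the exceptional cases $\typeE_7$ and $\typeE_8$, writing $2\omega_2$ explicitly as a sum of $4$ (resp.\ $6$) roots requires a small but genuine search in the root system, most conveniently carried out by taking roots from $V(\alpha)=V_l(\alpha)\sqcup V_s(\alpha)$ and checking the coefficient vector against $2\omega_2$ expressed in the $\alpha$-basis. I expect this bookkeeping to be the bulk of the proof, and I would present it as a short case-by-case table, one line per $(\text{type},\gamma)$ pair, listing the ceiling value and an explicit partition achieving it.
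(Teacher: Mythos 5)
Your proposal is correct and follows essentially the same route as the paper: reduce via the subface decomposition $\mi(F)=\bigcup_{F'}\prmi(F')$ and the Weyl group action to proper minimal elements of a coordinate facet $F(\alpha)$, then verify the bound case by case for the short list in Theorem~\ref{introTheorem_properMinimalElements} by exhibiting explicit partitions into roots (the paper indeed writes $2\omega_3=(\alpha_1+2\alpha_2+2\alpha_3)+\alpha_3$ in $\typeB_3$, $\omega_2=\theta+(\theta-\alpha_8)+\theta_{\typeE_6}$ in $\typeE_8$, etc.). Your computed ceilings ($2$; $4$; $3$ and $6$; $1$ and $2$) all match the paper's.
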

\begin{proof} Since $\gamma$ is a minimal element of $M(F)$ there exists a subface $F'$ of $F$ such that $\gamma$ is a proper minimal element of $M(F')$. Moreover by Lemma \ref{lemma_subface}, $F'$ is a facet of the root polytope of $\Phi'$, where $E'\doteq\langle F'\rangle_\R$, $\Phi'\doteq\Phi\cap E'$, and it is clearly defined by $\lambda_{F|E'}$. Hence we may assume that $\gamma\in\prmi(F)$.

If $\tau\in W$ then $\tau\cdot\gamma$ is a minimal element of $M(\tau F)$; further $\lambda_{\tau F}=\tau\lambda_F$ and $|\tau\gamma|=|\gamma|$. So we may also assume that $F\doteq F(\alpha)$ is a coordinate face for a maximal root $\alpha\in\Delta$; in particular $F$ is defined by $\lambda_F=\coomega_\alpha/m_\alpha$. Hence we complete the proof by checking the inequality in the claim writing the proper minimal elements in the Table \ref{table_rootData} as sum of roots.

For $\typeB_3$ and $\alpha\doteq\alpha_3$,
$$
\begin{array}{rcl}
\gamma & = & 2\omega_3\\
 & = & \alpha_1+2\alpha_2+3\alpha_3\\
 & = & (\alpha_1+2\alpha_2+2\alpha_3)+\alpha_3
\end{array}
$$
so $|\gamma|\leq 2 = \lceil(\coomega_3,\gamma)/2\rceil$.

For $\typeE_7$ and $\alpha\doteq\alpha_2$,
$$
\begin{array}{rcl}
\gamma & = & 2\omega_2\\
 & = & 4\alpha_1+7\alpha_2+8\alpha_3+12\alpha_4+9\alpha_5+6\alpha_6+3\alpha_7\\
 & = & \theta+(\theta-\alpha_1-\alpha_3)+\theta_{\typeE_6}+(\alpha_2+\alpha_3+\alpha_4+\alpha_5+\alpha_6+\alpha_7)
\end{array}
$$
so $|\gamma|\leq 4=\lceil(\coomega_2,\gamma)/2\rceil$.

For $\typeE_8$ and $\alpha\doteq\alpha_2$, $\gamma=\omega_2=\theta+(\theta-\alpha_8)+\theta_{\typeE_6}$. So $|\gamma|\leq 3=\lceil\coomega_2(\gamma)/3\rceil$ and also $|2\gamma|\leq 6=\lceil(\coomega_2,2\gamma)/3\rceil$.

For $\typeG_2$ and $\alpha\doteq\alpha_1$, $\gamma=\omega_1=2\alpha_1+\alpha_2\in\Phi$. So $|\gamma|=1=\lceil(\coomega_1,\gamma)/3\rceil$ and $|2\gamma|\leq 2=\lceil(\coomega_1,2\gamma)/3\rceil$.
\end{proof}

We are now in a position to prove our formula for the length map.
\begin{proof}[Proof of Theorem \ref{introTheorem_formula}.] Let $\gamma\in R$ and $r\doteq |\gamma|$. Then there exist $\beta_1,\beta_2,\ldots,\beta_r\in\Phi$ such that $\gamma=\beta_1+\beta_2+\cdots+\beta_r$. Now if $\lambda_F$ defines the facet $F$ we have $(\lambda_F,\gamma)=(\lambda_F,\beta_1)+(\lambda_F,\beta_2)+\cdots+(\lambda_F,\beta_r)\leq r$ since $(\lambda_F,u)\leq1$ for all $u\in\RP_\Phi$. But being $r$ a non--negative integer we have also $\lceil(\lambda_F,\gamma)\rceil\leq r$.

In order to prove the reverse inequality notice that the $\Q$--cones over the facets partition $\langle\Delta\rangle_\Q$; hence if $\gamma\in R$ there exists a facet $F$ such that $\gamma\in C(V(F))$. Then $(\lambda_F,\gamma)\geq(\lambda_{F'},\gamma)$ for all facet $F'$, so it suffices to show that $|\gamma|\leq\lceil(\lambda_F,\gamma)\rceil$.

The monoid $C(V(F))\cap R=M(F)$ is the union of the monoids $\gamma'+N(V(F))$ with $\gamma'\in\mi(F)\cup\{0\}$. Hence there exists $\beta_1,\beta_2,\ldots,\beta_t\in V(F)\subset\Phi$ and $\gamma'\in\mi(F)\cup\{0\}$ such that $\gamma=\beta_1+\beta_2+\cdots+\beta_t+\gamma'$. So, using Lemma \ref{lemma_minimalFormula}, $|\gamma|\leq t+|\gamma'|\leq t+\lceil(\lambda_F,\gamma')\rceil=\lceil t+(\lambda_F,\gamma')\rceil=\lceil(\lambda_F,\gamma)\rceil$.
\end{proof}

As an application of our results we see an explicit formula for the (positive) length for elements of $R^+$ for type $\typeA_\ell$. Similar but more complex formulas may be derived for the other irreducible root systems. Since any partition in positive roots is a partition in roots, we have always $|\gamma|\leq|\gamma|_+$ for all $\gamma\in R^+$; in what follows we will use this many times without explicit mention.

Assuming that $\Phi$ is of type $\typeA_\ell$, we want to show that the partition of minimal size for an element $\gamma\in R^+$ is given by what we call a \emph{horizontal tiling}. We illustrate this with a graphical example for $\typeA_6$ and $\gamma=2\alpha_1+3\alpha_2+3\alpha_3+4\alpha_5+\alpha_6$. Consider the map $a:i\longmapsto a_i$ with $a_i=(\coomega_i,\gamma)$ that we draw as in the Figure \ref{figure_horizontalPartition}.
\begin{figure}[h]
\centering
\begin{pgfpicture}{0cm}{0cm}{4cm}{2.5cm}
\begin{pgftranslate}{\pgfpoint{0cm}{0.5cm}}
\pgfsetxvec{\pgfpoint{0.5cm}{0cm}}
\pgfsetyvec{\pgfpoint{0cm}{0.5cm}}
\begin{pgfscope}
\color{lightgray}
\pgfline{\pgfxy(2,0)}{\pgfxy(2,2)}
\pgfline{\pgfxy(3,0)}{\pgfxy(3,3)}
\pgfline{\pgfxy(6,0)}{\pgfxy(6,1)}
\end{pgfscope}
\pgfline{\pgfxy(0,0)}{\pgfxy(8,0)}
\pgfsetlinewidth{0.8pt}
\pgfline{\pgfxy(1,0)}{\pgfxy(4,0)}
\pgfline{\pgfxy(5,0)}{\pgfxy(7,0)}
\pgfline{\pgfxy(1,0)}{\pgfxy(1,2)}
\pgfline{\pgfxy(2,2)}{\pgfxy(2,3)}
\pgfline{\pgfxy(4,0)}{\pgfxy(4,3)}
\pgfline{\pgfxy(5,0)}{\pgfxy(5,4)}
\pgfline{\pgfxy(6,1)}{\pgfxy(6,4)}
\pgfline{\pgfxy(7,0)}{\pgfxy(7,1)}
\pgfline{\pgfxy(1,1)}{\pgfxy(4,1)}
\pgfline{\pgfxy(1,2)}{\pgfxy(4,2)}
\pgfline{\pgfxy(2,3)}{\pgfxy(4,3)}
\pgfline{\pgfxy(5,1)}{\pgfxy(7,1)}
\pgfline{\pgfxy(5,2)}{\pgfxy(6,2)}
\pgfline{\pgfxy(5,3)}{\pgfxy(6,3)}
\pgfline{\pgfxy(5,4)}{\pgfxy(6,4)}
\pgfputat{\pgfxy(1.5,-0.5)}{\pgfbox[center,center]{$1$}}
\pgfputat{\pgfxy(2.5,-0.5)}{\pgfbox[center,center]{$2$}}
\pgfputat{\pgfxy(3.5,-0.5)}{\pgfbox[center,center]{$3$}}
\pgfputat{\pgfxy(4.5,-0.5)}{\pgfbox[center,center]{$4$}}
\pgfputat{\pgfxy(5.5,-0.5)}{\pgfbox[center,center]{$5$}}
\pgfputat{\pgfxy(6.5,-0.5)}{\pgfbox[center,center]{$6$}}
\end{pgftranslate}
\end{pgfpicture}
\caption{A horizontal partition}
\label{figure_horizontalPartition}
\end{figure}
We define a partition of $\gamma$ by grouping together as many boxes as possible in horizontal lines. This is what we call the horizontal tiling for $\gamma$; it gives a partition of $\gamma$ in terms of positive roots that we call the \emph{horizontal partition}. It is not hard to show that the number $h(\gamma)$ of roots in the horizontal partition for $\gamma=\sum_{i=1}^\ell a_i\alpha_i\in R^+$ is given by the formula
$$
h(\gamma)=\sum_{i=1}^{\ell}\max(a_i-a_{i-1},0)
$$
where we set $a_0=0$. In the following lemma we show that both the length and the positive length are given by the map $h$; so in particular they coincide for type $\typeA_\ell$.

\begin{proposition}\label{proposition_lengthMapA}
If $\Phi$ is of type $\typeA_\ell$ then, for all $\gamma\in R^+$, $|\gamma|=|\gamma|_+=h(\gamma)$.
\end{proposition}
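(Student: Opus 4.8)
The plan is to prove the chain of inequalities $h(\gamma) \geq |\gamma|_+ \geq |\gamma| \geq h(\gamma)$, of which only the outer two need work. The middle inequality is automatic since every partition into positive roots is a partition into roots.

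For the inequality $|\gamma|_+ \leq h(\gamma)$, I would argue that the horizontal tiling genuinely produces a partition of $\gamma$ into positive roots. Each horizontal line of boxes occupies a contiguous interval of columns $[i,j]$, and the positive roots of type $\typeA_\ell$ are exactly the sums $\alpha_i + \alpha_{i+1} + \cdots + \alpha_j$, i.e. the indicator functions of intervals; so a row spanning columns $i$ through $j$ corresponds to the root $\alpha_i + \cdots + \alpha_j$. The key combinatorial point is that the number of boxes in column $i$ of the diagram is $a_i = (\coomega_i,\gamma)$, and stacking rows greedily from the bottom decomposes the staircase column-height vector $(a_1,\dots,a_\ell)$ into a sum of interval-indicator vectors; counting how many rows start at column $i$ (a row starts at $i$ precisely when $a_i > a_{i-1}$, contributing $a_i - a_{i-1}$ new rows there) gives exactly $\sum_i \max(a_i - a_{i-1},0) = h(\gamma)$ rows. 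Hence $\gamma$ is a sum of $h(\gamma)$ positive roots and $|\gamma|_+ \leq h(\gamma)$.

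For the reverse inequality $|\gamma| \geq h(\gamma)$, I would invoke Theorem \ref{introTheorem_formula}. For type $\typeA_\ell$ all simple roots are maximal with $m_\alpha = 1$, so by Proposition \ref{proposition_halfSpaces} the facets of $\RP_\Phi$ are the $W$-translates of the coordinate facets $F(\alpha_i)$, and these are integrally closed with defining vector $\lambda_{F(\alpha_i)} = \coomega_i$. The formula gives $|\gamma| = \max_F \lceil (\lambda_F,\gamma)\rceil \geq \max_i (\coomega_i,\gamma) = \max_i a_i$. This alone is not enough to reach $h(\gamma)$, so I would instead exhibit a single facet $F$ with $(\lambda_F,\gamma) = h(\gamma)$, or more precisely with $(\lambda_F,\gamma)$ at least $h(\gamma)$ so that $\lceil(\lambda_F,\gamma)\rceil \geq h(\gamma)$ (equality then follows from the already-established $|\gamma|\le|\gamma|_+\le h(\gamma)$). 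The natural candidate is built from the set $S = \{\,i : a_i > a_{i-1}\,\}$ of columns where a new row begins: one takes an appropriate vector in the $W$-orbit of some $\coomega_j$ — concretely, the functional that on the canonical realization of $\typeA_\ell$ (coordinates summing to zero) assigns $+1$ to the ``up-steps'' at positions in $S$ — and checks it lies in the half-space presentation of $\RP_\Phi$ and pairs with $\gamma$ to give $h(\gamma)$.

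The main obstacle is this last step: producing the right facet-defining functional $\lambda_F$ uniformly in terms of the up-step set $S$ and verifying $(\lambda_F, \beta) \leq 1$ for every root $\beta$ while $(\lambda_F, \gamma) \geq h(\gamma)$. Using the standard model of $\typeA_\ell$ where roots are $e_a - e_b$, a functional of the form $\sum_{a \in T} e_a - \sum_{a \in T'} e_a$ with $|T| = |T'|$ (so it is in $R^* \otimes \Q$) pairs with $e_a - e_b$ to give a value in $\{-2,-1,0,1,2\}$, which is too coarse; the correct choice is rather a ``fractional'' vector $\frac{1}{k}(\text{some }\pm1\text{ pattern})$ lying in the Weyl orbit of a fundamental coweight, chosen so that its pairing with each $e_a - e_b$ is at most $1$. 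Pinning down this vector and checking the bound is the one genuinely non-routine part; everything else is the bookkeeping of the greedy row decomposition and a direct appeal to Theorem \ref{introTheorem_formula}. Alternatively, one can sidestep the facet construction entirely by proving $|\gamma| \geq h(\gamma)$ directly: if $\gamma = \beta_1 + \cdots + \beta_r$ with each $\beta_m$ a root $\pm(e_{a_m} - e_{b_m})$, then reading off coordinate $i$'s of the $\coomega_i$-pairings and using that each root contributes at most $1$ to the "total rise" $\sum_i \max(a_i - a_{i-1},0)$ — because a single root, being an interval indicator up to sign, increases this telescoping quantity by at most one — forces $r \geq h(\gamma)$. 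I would present whichever of these two routes is cleaner; the second avoids Theorem \ref{introTheorem_formula} and the orbit computation and is likely the intended argument.
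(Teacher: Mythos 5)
Your proposal is correct, and your preferred ``Route 2'' is genuinely different from (and more self-contained than) the paper's argument, so let me compare the two. The paper first proves $|\gamma|_+=h(\gamma)$: the upper bound exactly as you do, and the lower bound by showing $h(\gamma+\alpha_{h,k})\leq h(\gamma)+1$ for each \emph{positive} root $\alpha_{h,k}$ --- this is your increment lemma restricted to positive roots. It then proves $|\gamma|=|\gamma|_+$ by your Route 1: it invokes Theorem \ref{introTheorem_formula} together with an explicit parametrization of the facet normals as $\lambda(R)=\sum_{h=1}^{j}(\omega_{r_h}-\omega_{r_h-1})$ over increasing sequences (``rows'') $R=(r_1<\cdots<r_j)$, computes $(\lambda(R),\gamma)=\sum_{h}(a_{r_h}-a_{r_h-1})$, and observes that the maximum over $R$ is $\sum_i\max(a_i-a_{i-1},0)=h(\gamma)$, attained at the up-step set $S$. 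So the ``genuinely non-routine part'' you flagged is not actually an obstacle: in the coordinates-summing-to-zero model the $W$-orbit of $\omega_j$ consists of the vectors $\sum_{a\in T}e_a$ minus their trace part with $|T|=j$, and such a vector pairs with any root $e_a-e_b$ to give a value in $\{-1,0,1\}$ (your $\{-2,\ldots,2\}$ worry comes from taking two disjoint index sets of equal size, which is not the right form), so $\lambda(S)$ is a legitimate facet normal with $(\lambda(S),\gamma)=h(\gamma)$. Your Route 2 --- extending the increment lemma to negative roots so that any expression of $\gamma$ as a sum of $r$ roots forces $h(\gamma)\leq r$ --- is correct and closes the chain $h(\gamma)\geq|\gamma|_+\geq|\gamma|\geq h(\gamma)$ without Theorem \ref{introTheorem_formula} at all; the only care needed is that the intermediate partial sums $\beta_1+\cdots+\beta_m$ need not lie in $R^+$, so you should note that $h$ is defined on all of $R$ by the same formula and that $h(\gamma\pm\alpha_{h,k})\leq h(\gamma)+1$ still holds there (or argue directly by subadditivity of $x\mapsto\max(x,0)$ applied columnwise). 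What the paper's route buys is an illustration of Theorem \ref{introTheorem_formula} in action together with an explicit description of the facet functionals in type $\typeA_\ell$; what yours buys is a shorter, elementary proof independent of the main theorem.
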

\begin{proof}
For $1\leq h\leq k\leq\ell$, we denote by $\alpha_{h,k}$ the root $\alpha_h+\alpha_{h+1}+\cdots+\alpha_k$. First of all notice that $|\gamma|_+\leq h(\gamma)$ since $h(\gamma)$ is the size of a partition in positive root (the horizontal one). Now we claim that $h(\gamma+\alpha_{h,k})\leq h(\gamma)+1$.

Indeed let $\gamma=\sum_{i=1}^\ell a_i\alpha_i$, $\gamma'\doteq\gamma+\alpha_{h,k}=\sum_{i=1}^\ell a'_i\alpha_i$. Notice that we have
$$
a'_i=\left\{
\begin{array}{ll}
a_i & \textrm{if }1\leq i<h\\
a_i+1 & \textrm{if }h\leq i\leq k\\
a_i & \textrm{if }k<i\leq\ell.
\end{array}
\right.
$$
Hence $h(\gamma')=h(\gamma)+(a_h-a_{h-1}+1)^0-(a_h-a_{h-1})^0+(a_{k+1}-a_k-1)^0-(a_{k+1}-a_k)^0$, where we set $(a)^0\doteq\max(a,0)$ for short. It is clear that $(a+1)^0-(a)^0\leq1$ and $(a-1)^0-(a)^0\leq0$; so we have $h(\gamma')\leq h(\gamma)+1$ as claimed.

Now let $\gamma=\beta_1+\beta_2+\cdots+\beta_r$ be a partition of minimal size of $\gamma$ in positive roots. Since any positive root is $\alpha_{h,k}$ for some $h,k$ as above, we find $h(\gamma)\leq r=|\gamma|_+$ by what proved. So $|\gamma|_+=h(\gamma)$; the horizontal partition is a partition of minimal size.

The final step is to show that $|\gamma|=|\gamma|_+$. We use the formula in Theorem \ref{introTheorem_formula}; for type $\typeA_\ell$ the facets are defined by the orbits of the fundamental weights under the Weyl group action. In turns they are in bijection with the set of sequences, called \emph{rows}, $R=1\leq r_1<r_2<\cdots r_j\leq\ell+1$ of increasing integers, with $j=1,2,\ldots,\ell$: the row $R$ corresponds to the weight $\lambda(R)\doteq\sum_{h=1}^j\omega_{r_h}-\omega_{r_h-1}$ (where we set $\omega_0=\omega_{\ell+1}=0$).

But for any such $R$ we have $(\lambda(R),\gamma) = \sum_{h=1}^j(\omega_{r_h}-\omega_{r_h-1},\gamma) = \sum_{h=1}^j(a_{r_h}-a_{r_h-1}) \leq \sum_{i=1}^l(a_i-a_{i-1})^0 = |\gamma|_+$. This proves that $|\gamma|=|\gamma|_+$.
\end{proof}

The next step is to prove that the two length maps are equal also for type $\typeC_\ell$.

\begin{proposition}\label{proposition_lengthMapC}
If $\Phi$ is of type $\typeC_\ell$, then for all $\gamma\in R^+$, $|\gamma|=|\gamma|_+$.
\end{proposition}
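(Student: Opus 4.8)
Since any partition into positive roots is in particular a partition into roots we always have $|\gamma|\le|\gamma|_+$, so the content of the statement is the reverse inequality $|\gamma|_+\le|\gamma|$ for $\gamma\in R^+$. In type $\typeC_\ell$ the unique maximal root is $\alpha_\ell$ and $m_{\alpha_\ell}=1$, so every defining vector $\lambda_F$ of a facet of $\RP_\Phi$ is of the form $\tau\coomega_{\alpha_\ell}$ with $\tau\in W$; hence $(\lambda_F,\cdot)$ is integer valued on $R$, and by Theorem \ref{introTheorem_formula} we get $(\lambda_F,\gamma)\le|\gamma|$ for every facet $F$. It is therefore enough to produce, for each $\gamma\in R^+$, a facet $F$ of $\RP_\Phi$ together with a decomposition of $\gamma$ into $(\lambda_F,\gamma)$ positive roots.

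The tool is the triangulation of the positive root polytope constructed in \cite{celliniMarietti2}: for type $\typeC_\ell$ it provides a triangulation $\mathcal{T}$ of $\RP_\Phi^+$, unimodular with respect to the root lattice $R$, whose vertices lie in $\Phi^+\cup\{0\}$; its existence is precisely the polyhedrality of $\Phi^+$. Coning the boundary $\partial\RP_\Phi^+$ from the origin gives a simplicial subdivision of the cone $C(\Phi^+)$, and I would first record the shape of its full--dimensional cells. Such a cell is $C(W)$ with $W$ the vertex set of a maximal boundary simplex $\tau$ of $\mathcal{T}$; since $\dim C(W)=\ell$, the origin is not in $\mathrm{aff}(\tau)$, so $\tau$ does not lie on a facet of $\RP_\Phi^+$ through $0$ (an ``inner'' facet, which lies in $\partial C(\Phi^+)$). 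As $\mathcal{T}$ is a triangulation of the polytope $\RP_\Phi^+$ it refines its face complex, so $\tau$ lies on one of the remaining facets of $\RP_\Phi^+$, which has the form $F\cap C(\Phi^+)$ for a facet $F$ of $\RP_\Phi$. Hence $W\subseteq F\cap\Phi^+=V(F)\cap\Phi^+$, so $(\lambda_F,w)=1$ for all $w\in W$, and by unimodularity $C(W)\cap R=\N W$ (and likewise for the faces $C(W')$, $W'\subseteq W$, of such a cell).

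Now take $\gamma\in R^+$. If $\gamma$ lies in the interior of $C(\Phi^+)$, the ray $\R_{\geq0}\gamma$ exits $\RP_\Phi^+$ through a point which is still interior to $C(\Phi^+)$, hence through a point of an outer facet $F\cap C(\Phi^+)$, inside a simplex of $\mathcal{T}$ with vertex set $W\subseteq V(F)\cap\Phi^+$; thus $\gamma\in C(W)$, and by the previous paragraph $\gamma=\sum_{w\in W}n_w\,w$ with $n_w\in\N$. This exhibits a partition of $\gamma$ into $\sum_w n_w$ positive roots, and $\sum_w n_w=\sum_w n_w(\lambda_F,w)=(\lambda_F,\gamma)$, whence $|\gamma|_+\le(\lambda_F,\gamma)\le|\gamma|$, giving equality of the two lengths. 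If instead $\gamma$ lies on the boundary of $C(\Phi^+)$, then $\gamma$ lies in the positive cone of a proper parabolic subsystem $\Phi'$ of $\Phi$ (a product of systems of type $\typeA$ and $\typeC$), and one concludes by induction on the rank: $|\gamma|_{+,\Phi}\le|\gamma|_{+,\Phi'}=|\gamma|_{\Phi'}$ using Proposition \ref{proposition_lengthMapA} for the type $\typeA$ factors and the inductive hypothesis for the type $\typeC$ factors, while $|\gamma|_{\Phi'}=|\gamma|_\Phi$ because $\RP_{\Phi'}$ is the slice of $\RP_\Phi$ by the span $\langle\Phi'\rangle_\R$ and so, by Theorem \ref{introTheorem_formula}, the two ambient lengths agree.

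The only genuinely external input, and the point requiring care, is the precise statement borrowed from \cite{celliniMarietti2}: that in type $\typeC_\ell$ the triangulation of $\RP_\Phi^+$ can be chosen unimodular with respect to $R$ and with all its vertices among the positive roots and the origin. Once this is granted, everything above is formal: the refinement of the face complex of $\RP_\Phi^+$ and the description of its outer facets are automatic for any triangulation of the polytope, and the counting collapses to the observation that $(\lambda_F,\cdot)\equiv1$ on $V(F)$. If the cited reference only yielded a (possibly non--unimodular) triangulation, one would have to replace the use of unimodularity by an integral--closure statement for the cones $C(V(F)\cap\Phi^+)$ in $R$, deduced from Corollary \ref{introCorollary_faceNormality} and Theorem \ref{introTheorem_properMinimalElements} by an argument parallel to the proof of Theorem \ref{introTheorem_formula}; but the unimodular route is the cleanest and is the one I would follow.
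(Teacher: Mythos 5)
Your proposal is correct and follows essentially the same route as the paper: both arguments rest on the unimodular triangulation from \cite{celliniMarietti2} compatible with the positive cone, and both close the gap by observing that a decomposition $\gamma=\sum_{\beta\in T}a_\beta\beta$ over a simplex $T\subseteq V(F)\cap\Phi^+$ has exactly $(\lambda_F,\gamma)\leq|\gamma|$ parts. The only difference is organizational: the paper quotes the triangulation properties directly at the level of $C(V(F))\cap C(\Phi^+)$ for a facet $F$ with $\gamma\in M(F)$ (each simplex being a lattice basis of $R$), which covers boundary elements of $C(\Phi^+)$ at once and makes your separate interior/boundary case split and rank induction unnecessary.
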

\begin{proof}
Let $F$ be a facet of $\RP_\Phi$ such that $\gamma\in M(F)$. In \cite{celliniMarietti2} a triangulation of the facets of $\RP_\Phi$ for type $\typeC_\ell$ is defined. This triangulation has the following two properties:
\begin{itemize}
\item[(i)] $C(V(F))\cap C(\Phi^+)$ is the union of the non--negative rational cones generated by certain simplexes of the triangulation;
\item[(ii)] each simplex of the triangulation of $F$ is a basis for the lattice $R$.
\end{itemize}
Hence $\gamma$ is a non--negative integral linear combination of a simplex $T\subset\Phi^+$ of the triangulation; let us say $\gamma=\sum_{\beta\in T}a_\beta\beta$, $a_\beta\in\N$. So we find $|\gamma|_+\leq\sum_{\beta\in T}a_\beta=(\lambda_F,\gamma)=|\gamma|$ and this finishes the proof.
\end{proof}

Now we prove that the positive length map is different from the length for all types but $\typeA_\ell$ and $\typeC_\ell$. 

\begin{proposition}\label{proposition_differentTypes}
If $\Phi$ is not of type $\typeA_\ell$ nor $\typeC_\ell$ then there exists $\gamma\in\R^+$ such that $|\gamma|<|\gamma|_+$.
\end{proposition}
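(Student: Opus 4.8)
The plan is to reduce everything to small subsystems by exploiting the monotonicity of polyhedrality under passing to subsystems, which was noted in the Introduction: if $\Phi^+$ is polyhedral, so is the positive root set of any subsystem. Since we have already essentially argued that $\Phi^+$ is polyhedral if and only if the two length maps agree (one direction being clear, and the other being what Propositions \ref{proposition_lengthMapA} and \ref{proposition_lengthMapC} establish together with the discussion around $\RP_\Phi^+=\RP_\Phi\cap C(\Phi^+)$), it suffices to exhibit a single $\gamma\in R^+$ with $|\gamma|<|\gamma|_+$ in each "minimal bad" type, and then invoke the subsystem inclusion to cover all remaining types. So the first step is to pin down which types are minimal with respect to containing a bad subsystem: the classification of Dynkin diagrams shows that every irreducible root system not of type $\typeA_\ell$ or $\typeC_\ell$ contains a subsystem of type $\typeB_3$, $\typeD_4$, or $\typeG_2$ — indeed $\typeB_\ell$ ($\ell\geq 3$), $\typeF_4$ contain $\typeB_3$; $\typeD_\ell$ ($\ell\geq 4$), $\typeE_6,\typeE_7,\typeE_8$ contain $\typeD_4$; and $\typeG_2$ is itself on the list.

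Next I would verify directly, in each of the three base cases, that $|\gamma|<|\gamma|_+$ for an explicit element. For $\typeB_3$ the example is already in the Introduction: with $\gamma=\alpha_1+2\alpha_3$ we have $|\gamma|=2$ (since $\gamma=(\alpha_1+\alpha_2+2\alpha_3)-\alpha_2$, i.e. $\gamma=(\alpha_1+\alpha_2+2\alpha_3)+(-\alpha_2)$ using the root $\alpha_1+\alpha_2+2\alpha_3$ and the root $-\alpha_2$) while $|\gamma|_+=3$ because every positive root has connected support and the support $\{\alpha_1,\alpha_3\}$ of $\gamma$ is disconnected, so $\gamma$ cannot be a single positive root, and a count of coefficients shows two positive roots cannot sum to $\gamma$ either. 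For $\typeG_2$ one likewise takes an element whose support is disconnected after subtracting a suitable root, or simply checks against the tables in \cite{bourbaki} as mentioned in the Introduction. For $\typeD_4$ the Introduction asserts it is easy; concretely one takes $\gamma$ equal to a root of the form $\alpha_i+2\alpha_2+\alpha_j$ minus $\alpha_2$ — for instance $\gamma = \alpha_1+\alpha_3+\alpha_4$ with central node $\alpha_2$ — which equals the highest root $\alpha_1+2\alpha_2+\alpha_3+\alpha_4$ minus the root $2\alpha_2$... correcting: one uses $\gamma=\alpha_1+\alpha_3+\alpha_4$, realized as $(\alpha_1+\alpha_2+\alpha_3)+(\alpha_2+\alpha_4)-2\alpha_2$ is not length $2$, so instead note $\gamma+\alpha_2$ is the highest root $\theta$, hence $|\gamma|\le 2$ via $\gamma=\theta+(-\alpha_2)$, while $|\gamma|_+=3$ since the support of $\gamma$ is the three outer nodes, which is disconnected, so no positive root equals $\gamma$ and no two positive roots sum to it.

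Then I would assemble the general statement: given $\Phi$ not of type $\typeA_\ell$ or $\typeC_\ell$, pick a subsystem $\Phi_0\subseteq\Phi$ of type $\typeB_3$, $\typeD_4$, or $\typeG_2$, together with a compatible choice of positive system $\Phi_0^+\subseteq\Phi^+$ (any root subsystem has a positive system induced by the ambient one, as used in Lemma \ref{lemma_subfaceSubsystem}). Take the bad element $\gamma\in R_0^+$ found above. Its positive length computed inside $\Phi_0^+$ equals its positive length inside $\Phi^+$: any expression of $\gamma$ as a sum of positive roots of $\Phi$ must, by taking the scalar product against the coweights dual to the simple roots of $\Phi$ outside $\Phi_0$, use only roots lying in the span of $\Phi_0$, hence only roots of $\Phi_0^+$ — this is the same span-restriction argument used repeatedly in Section \ref{section_preliminary}. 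Similarly $|\gamma|$ computed in $\Phi$ is at most $|\gamma|$ computed in $\Phi_0$ (more roots available can only help), and in fact equal by the same restriction argument applied to arbitrary roots of $\Phi$. Therefore $|\gamma|<|\gamma|_+$ in $\Phi$ as well.

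The main obstacle is the bookkeeping in the last paragraph: making rigorous that both $|\gamma|$ and $|\gamma|_+$ are unchanged when one passes from the subsystem $\Phi_0$ to the ambient $\Phi$. The positive-length invariance is the easier half and follows cleanly from the connected-support / coweight-restriction argument. The full-length invariance requires observing that if $\gamma=\sum\beta_i$ with $\beta_i\in\Phi$, then pairing with $\coomega_\delta$ for each simple $\delta\notin\Phi_0$ forces $\sum_i(\coomega_\delta,\beta_i)=0$; this does not immediately force each $\beta_i$ into $\mathrm{span}(\Phi_0)$ since the terms could cancel. To handle this one should instead argue via Theorem \ref{introTheorem_formula}: $|\gamma|=\max_F\lceil(\lambda_F,\gamma)\rceil$ over facets $F$ of $\RP_\Phi$, and since $\gamma$ lies in the span $E_0$ of $\Phi_0$, the relevant supremum is achieved on a facet whose defining functional restricts on $E_0$ to a functional defining a facet of $\RP_{\Phi_0}$ — this is exactly the content of Lemma \ref{lemma_subface} and Lemma \ref{lemma_subfaceSubsystem}, which say faces of $\RP_\Phi$ meeting $E_0$ cut out faces of $\RP_{\Phi_0}$. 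Thus the formula gives the same value computed in either system, and the proof concludes.
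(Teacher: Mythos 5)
Your proof is correct in substance, and the witnesses you use are in fact the same ones the paper uses, but the packaging is genuinely different. The paper's proof is a one-line reduction plus a table: for each bad type it exhibits $\alpha,\beta\in\Phi^+$ with $\gamma\doteq\alpha-\beta\in R^+$ and $|\gamma|_+=3$, which suffices because $|\gamma|=|\alpha+(-\beta)|\leq2$ automatically; no passage to subsystems is ever made explicit. You instead reduce to the minimal bad diagrams $\typeB_3$, $\typeD_4$, $\typeG_2$ and lift, which forces you to prove a transfer lemma. Your transfer of the \emph{positive} length is clean and correct: for a standard parabolic subsystem $\Phi_0$, pairing a decomposition $\gamma=\sum\beta_i$ ($\beta_i\in\Phi^+$) with $\coomega_\delta$ for $\delta\in\Delta\setminus\Delta_0$ gives a sum of non--negative terms equal to zero, so every $\beta_i$ lies in $\Phi_0^+$. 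What buys you the theorem is then only the trivial inequality $|\gamma|_\Phi\leq|\gamma|_{\Phi_0}\leq 2$, so the chain $|\gamma|_\Phi\leq 2<3=|\gamma|_{+,\Phi_0}=|\gamma|_{+,\Phi}$ closes the argument. Your whole final paragraph worrying about the \emph{equality} $|\gamma|_\Phi=|\gamma|_{\Phi_0}$ is therefore unnecessary, and the fix you sketch there is actually shaky: Lemmas \ref{lemma_subfaceSubsystem} and \ref{lemma_subface} concern subspaces spanned by \emph{faces} of $\RP_\Phi$, and the span of a standard parabolic subsystem is in general not of that form, so they do not directly give the facet correspondence you invoke. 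Drop that paragraph and nothing is lost.

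Two small repairs to your base cases: in $\typeD_4$ the root $\alpha_1+\alpha_2+\alpha_3+\alpha_4$ is \emph{not} the highest root (which is $\alpha_1+2\alpha_2+\alpha_3+\alpha_4$), but it is a root, so $\gamma=\alpha_1+\alpha_3+\alpha_4=(\alpha_1+\alpha_2+\alpha_3+\alpha_4)+(-\alpha_2)$ still has $|\gamma|\leq2$ and your support argument gives $|\gamma|_+=3$; and for $\typeG_2$ you should actually name the element --- take $\gamma=3\alpha_1=(3\alpha_1+\alpha_2)+(-\alpha_2)$, so $|\gamma|\leq2$, while any positive root with $\alpha_2$--coefficient $0$ is $\alpha_1$ itself, forcing $|\gamma|_+=3$.
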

\begin{proof} For all the above types we find $\alpha,\beta\in\Phi^+$ such that $\alpha-\beta\in R^+$, $|\alpha-\beta|_+=3$; this suffices to prove our claim since $|\alpha-\beta|\leq 2$. The table below reports such roots $\alpha$, $\beta$.
\begin{table}[h]
\centering
\begin{tabular}{l|c|c}
type & $\alpha$ & $\beta$\\
\hline
$\typeB_\ell$, $\ell\geq3$ & $\alpha_{\ell-2}+\alpha_{\ell-1}+2\alpha_\ell$ & $\alpha_{\ell-1}$\\
$\typeD_\ell$ & $\alpha_{\ell-3}+\alpha_{\ell-2}+\alpha_{\ell-1}+\alpha_\ell$ & $\alpha_{\ell-2}$\\
$\typeE$ & $\alpha_2+\alpha_3+\alpha_4+\alpha_5$ & $\alpha_4$\\
$\typeF_4$ & $\alpha_1+\alpha_2+2\alpha_3$ & $\alpha_2$\\
$\typeG_2$ & $3\alpha_1+\alpha_2$ & $\alpha_2$\\
\end{tabular}
\end{table}
\end{proof}

Finally the Corollary \ref{introCorollary_lengths} of the Introduction follows by Propositions \ref{proposition_lengthMapA}, \ref{proposition_lengthMapC} and \ref{proposition_differentTypes}.

\newpage

\begin{table}[h]
\centering
\begin{tabular}{l|c|c|c|c|c}
\multirow{2}{*}{type} &\multirow{2}{*}{maximal roots $\alpha$} & \multirow{2}{*}{$m_\alpha$} & values of $\nabla_{\alpha,\delta}$ on $V(\alpha)$ & values of $\nabla_{\alpha,\epsilon}$ on $V(\alpha)$ & minimal\\
 & & & $\delta\neq\alpha$ max. & $\epsilon$ non max. adj. to $\alpha$ & elements\\
\hline
\multirow{2}{*}{$\typeA_\ell$} & \multirow{2}{*}{$\alpha_1,\alpha_2,\ldots,\alpha_\ell$} & \multirow{2}{*}{$1$} & \multirow{2}{*}{$0$,$1$} & \multirow{2}{*}{---} & \multirow{2}{*}{---}\\
 & & & & \\
\hline
\multirow{4}{*}{$\typeB_\ell$, $\ell\geq3$} & \multirow{2}{*}{$\alpha_1$} & \multirow{2}{*}{$1$} & $0$, $1$ on $V_l(\alpha)$ & \multirow{2}{*}{---} & \multirow{2}{*}{---}\\
 & & & $1/2$ on $V_s(\alpha)$ & & \\
\cline{2-6}
 & \multirow{2}{*}{$\alpha_\ell$} & \multirow{2}{*}{$2$} & \multirow{2}{*}{$0$, $1$} & ($\epsilon=\alpha_{\ell-1},\, m_\epsilon = 2$) & \multirow{2}{*}{$2\omega_3$ for $\ell=3$}\\
 & & & & $0$, $1/2$\\
\hline
\multirow{3}{*}{$\typeC_\ell$, $\ell\geq2$} & \multirow{3}{*}{$\alpha_\ell$} & \multirow{3}{*}{$1$} & \multirow{3}{*}{---} & ($\epsilon=\alpha_{\ell-1},\, m_\epsilon = 2$) & \multirow{3}{*}{---}\\
 & & & & $0$, $1$ on $V_l(\alpha)$\\
 & & & & $0$, $1/2$ on $V_s(\alpha)$\\
\hline
\multirow{2}{*}{$\typeD_\ell$} & \multirow{2}{*}{$\alpha_1$, $\alpha_{\ell-1}$, $\alpha_\ell$} & \multirow{2}{*}{$1$} & \multirow{2}{*}{$0$, $1$} & \multirow{2}{*}{---} & \multirow{2}{*}{---}\\
 & & & & \\
\hline
\multirow{4}{*}{$\typeE_6$} & \multirow{2}{*}{$\alpha_1$} & \multirow{2}{*}{$1$} & \multirow{2}{*}{$0$, $1$} & ($\epsilon=\alpha_3$, $m_\epsilon=2$) & \multirow{2}{*}{---}\\
 & & & & $0$, $1/2$, $1$\\
\cline{2-6}
 & \multirow{2}{*}{$\alpha_6$} & \multirow{2}{*}{$1$} & \multirow{2}{*}{$0$, $1$} & ($\epsilon=\alpha_5$, $m_\epsilon=2$) & \multirow{2}{*}{---}\\
 & & & & $0$, $1/2$, $1$\\
\hline
\multirow{4}{*}{$\typeE_7$} & \multirow{2}{*}{$\alpha_2$} & \multirow{2}{*}{$2$} & \multirow{2}{*}{$0$, $1$} & \multirow{2}{*}{---} & \multirow{2}{*}{$2\omega_2$}\\
 & & & &\\
\cline{2-6}
 & \multirow{2}{*}{$\alpha_7$} & \multirow{2}{*}{$1$} & \multirow{2}{*}{$0$, $1/2$, $1$} & ($\epsilon=\alpha_6$, $m_\epsilon=2$) & 	\multirow{2}{*}{---}\\
 & & & & $0$, $1/2$, $1$\\
\hline
\multirow{4}{*}{$\typeE_8$} & \multirow{2}{*}{$\alpha_1$} & \multirow{2}{*}{$2$} & \multirow{2}{*}{$0$, $1/3$} & ($\epsilon=\alpha_3$, $m_\epsilon=4$) & \multirow{2}{*}{---}\\
 & & & & $0$, $1/4$\\
\cline{2-6}
 & \multirow{2}{*}{$\alpha_2$} & \multirow{2}{*}{$3$} & \multirow{2}{*}{$0$, $1/2$} & \multirow{2}{*}{---} & \multirow{2}{*}{$\omega_2,\,2\omega_2$}\\
 & & & & \\
\hline
\multirow{3}{*}{$\typeF_4$} & \multirow{3}{*}{$\alpha_4$} & \multirow{3}{*}{$2$} & \multirow{3}{*}{---} & ($\epsilon=\alpha_3$, $m_\epsilon=4$) & \multirow{3}{*}{---}\\
 & & & & $0$, $1/2$ on $V_l(\alpha)$\\
 & & & & $1/4$ on $V_s(\alpha)$\\
\hline
\multirow{2}{*}{$\typeG_2$} & \multirow{2}{*}{$\alpha_1$} & \multirow{2}{*}{$3$} & \multirow{2}{*}{---} & ($\epsilon=\alpha_2$, $m_\epsilon=2$) & \multirow{2}{*}{$\omega_1,\,2\omega_1$}\\
 & & & & $0$, $1/2$\\
\end{tabular}
\caption{Maximal root data}
\label{table_rootData}
\end{table}

\newpage

{\footnotesize
{\sc Dipartimento di Matematica e Fisica, Universita' del Salento, Via per Arnesano, 73047 Monteroni di Lecce (LE), Italy}

{\it E-mail address}: {\tt rocco.chirivi@unisalento.it}
}

\end{document}